\documentclass{amsart}

\usepackage{graphicx,amssymb,mathrsfs,amsmath,amsfonts,appendix}
\usepackage{paralist}
\usepackage{enumitem}
\usepackage[foot]{amsaddr}
\usepackage[top=1.5in, left=1.25in, right=1.25in, bottom=1.5in]{geometry}

\usepackage{psfrag}
\usepackage[colorinlistoftodos]{todonotes}

\renewcommand{\figurename}{Figure}
\newcommand{\figref}[1]{\figurename~\ref{#1}}

\newtheorem{theorem}{Theorem}[section]
\newtheorem{definition}[theorem]{Definition}
\newtheorem{lemma}{Lemma}[section]
\newtheorem{corollary}{Corollary}[section]


\title[Cross-diffusion with excluded-volume effects]{Cross-diffusion systems with excluded-volume effects and asymptotic gradient flow structures}

\author{Maria Bruna$^1$}
\address{$^1$Mathematical Institute, University of Oxford,  RQQ, Woodstock Road, Oxford OX2 6GG, UK}

\author{Martin Burger$^2$}
\address{$^2$Institut f\"ur Numerische und Angewandte Mathematik and Cells in Motion Cluster of Excellence, Westf\"alische Wilhelms Universit\"at M\"unster, Einsteinstrasse 62, D 48149 M\"unster, Germany}

\author{Helene Ranetbauer$^3$}
\address{$^3$RICAM, Austrian Academy of Sciences, Altenbergerstr. 63, 4040 Linz, Austria}

\author{Marie-Therese Wolfram$^4$}
\address{$^4$University of Warwick, Coventry CV4 7AL, UK and RICAM, Austrian Academy of Sciences, Altenbergerstr. 63, 4040 Linz, Austria}

\begin{document}

\label{firstpage}
\maketitle

\begin{abstract}
In this paper we discuss the analysis of a cross-diffusion PDE system for a mixture of hard spheres, which was derived  in \cite{Bruna:2012wu} from a stochastic system of interacting Brownian particles using the method of matched asymptotic expansions. The resulting cross-diffusion system is valid in the limit of small volume fraction of particles. While the system has a gradient flow structure in the symmetric case of all particles having the same size and diffusivity, this is not valid in general. We discuss local stability and global existence for the symmetric case using the gradient flow structure and entropy variable techniques. For the general case, we introduce the concept of an asymptotic gradient flow structure and show how it can be used to study the behavior close to equilibrium.  Finally we illustrate the behavior of the model with various numerical simulations.
\end{abstract}

\section{Introduction} \label{sec:intro}

Systems of interacting particles can be observed in biology (e.g. cell populations), physics or social sciences (e.g. animal swarms or large pedestrian crowds). Macroscopic models describing the individual interactions of these particles among themselves as well as their environment lead to complex systems of differential equations (cf. e.g. \cite{bendahmane2009conservative,Bruna:2012wu,Bruna:2012cg,MR2745794,burger:2015uk,burger2012nonlinear,di2016nonlocal,painter2009continuous,Schlake:2011wr,Simpson:2009gi}). In microscopic models the dynamics of each particle is accounted for explicitly, while the macroscopic models typically consist of partial differential equations for the population density. Passing from the microscopic model to the macroscopic equations in a systematic way is, in general, very challenging, and often one relies on closure assumptions, which can be made rigorous under certain scaling assumptions on the number and size of particles. In particular, when crowding due to the finite-size of particles is included in the model, the limiting process is quite subtle and, using different assumptions and closure relations, a variety of macroscopic equations have been derived. For instance, the macroscopic equations of a two-species system where particles undergo a simple exclusion process on a lattice can be derived using formal Taylor expansions, see for example \cite{MR2745794,Simpson:2009gi}. The case when particles are not confined to a regular lattice and undergo instead a Brownian motion with hard-core interactions was considered in \cite{Bruna:2012wu} using matched asymptotic expansions. Cross-diffusion is a common feature of all these models and poses a particular
challenge for the analysis since maximum principles do not hold. Classical examples of cross-diffusion systems are reaction diffusion systems or systems describing multicomponent gas mixtures. These quasi-linear parabolic systems were analyzed by Ladyzenskaya \cite{ol1968linear} or Amann \cite{amann1985global,amann1989dynamic}, which however rely on strong parabolicity assumptions that break down for the degenerate cross-diffusion systems derived from the interacting particle systems mentioned above.

The canonical form for a two-species system of interacting
particles (called red and blue in the following) is
\begin{align}\label{cross-diffusion}
\partial_t
\begin{pmatrix} r\\ b \end{pmatrix}
= \nabla \cdot \left(D(r,b) \nabla \begin{pmatrix} r \\ b \end{pmatrix} - F(r,b) \begin{pmatrix} r \\ b \end{pmatrix}\right),
\end{align}
where $D = D(r,b)$ is the diffusion matrix and $F = F(r,b)$ is the drift matrix due to a convective flux. 

Systems like \eqref{cross-diffusion} often have a gradient flow structure
\begin{align} \label{gradflow}
\partial_t \begin{pmatrix} r\\  b \end{pmatrix}    = 
\nabla \cdot \left[   M (r,b) \nabla 
\begin{pmatrix} \partial_r E  \\ \partial_b E \end{pmatrix}\right],
\end{align}
where $M$ is a mobility matrix and $\partial_r E$ and $\partial_b E$ denote the functional derivative of an entropy functions $E$ with respect to $r$ and $b$, respectively. 
The gradient flow formulation provides a natural framework to study the analytic behavior of such systems, cf. e.g. \cite{ambrosio2008gradient}.  It has been used to analyze existence and long-time behavior of systems, see for example \cite{carrillo2014gradient,jungel2014boundedness,liero2013gradient,zinsl2015transport}.
As a result, being able to express a PDE system as gradient flows of an entropy is a very desirable feature; yet, this is not possible in general. The lack of the gradient flow structure on the PDE level can result from the approximations made when passing from the microscopic description to the macroscopic equations. This is the case of the cross-diffusion system derived in \cite{Bruna:2012wu}, which was derived using the method of matched asymptotics. There has been a lot of research on the passage from microscopic models to the continuum equations, for example in the hydrodynamic limit \cite{kipnis2013scaling}. More recently the microscopic origin of entropy structures, which connects gradient flows and the large deviation principle was analyzed in \cite{adams2011larg,liero2015microscopic}. 

In this paper we introduce the idea of an asymptotic gradient flow structure as a generalization of a standard or full gradient flow for systems derived as an asymptotic expansion such as that in \cite{Bruna:2012wu}.
In this paper we provide several analytic results for these cross-diffusion systems and introduce the notion of asymptotic gradient flows. We discuss how the closeness of
these asymptotic gradient flow structures can be used to analyze the behavior of the system close to equilibrium. Furthermore we present a global in time existence result 
in the case of particles of same size and diffusivity (in which the system has a full gradient flow structure). The existence proof is based on an implicit Euler discretisation
and Schauder's fixed point theorem. We study the linearized system with an additional regularization term in the entropy to ensure boundedness of the solutions and deduce existence
results for the unregularised system in the limit (similar to the deep quench limit for the Cahn Hilliard equation \cite{Elliott:1996}). This is, to the authors' knowledge,
the first global in time existence result for this system so far. We note however that it is only valid if the total density stays strictly below the maximum density.

This paper is organized as follows: we introduce the mathematical model in Section \ref{sec:model} and discuss the cases for which the system has either a full or an asymptotic gradient flow structure. In Section \ref{sec:closetoequilibrium} we define the notion of asymptotic gradient flows formally and discuss how they can
be used to analyze the behavior of stationary solutions close to equilibrium. Several numerical examples illustrating the deviation of stationary solutions from the equilibrium solutions
for asymptotic gradient flows are presented in Section \ref{sec:numerics}. Finally, we give a global in time existence result in the case of particles of same size and diffusivity in Section \ref{sec:existence}.

\section{The mathematical model} \label{sec:model}

In this paper we analyze a cross-diffusion system for a mixture of hard spheres derived in \cite{Bruna:2012wu}, which we present below. The system is obtained as the continuum limit of a stochastic system with two types of interacting Brownian particles, referred to as red and blue particles. In particular, we consider $N_r$ red particles of diameter $\epsilon_r$, constant diffusion coefficient $D_r$ and external potential $\tilde V_r$,  and $N_b$ blue particles of diameter $\epsilon_b$,  diffusion coefficient $D_b$, and external potential $\tilde V_b$. Each particle evolves according to a stochastic differential equation (SDE) with independent Brownian dynamics, and interacts with the other particles in the system via hard-core collisions. This means that the centers of two particles with position ${\bf X}_i$ and ${\bf X}_j$ in space are not allowed to get closer than the sum of their radii, that is, $ \| {\bf X}_i - {\bf X}_j \| \ge (\epsilon_i + \epsilon_j)/ 2$, where $\epsilon_i$ denotes the radius of the $i$th particle. We define the total number of particles in the system by  $N = N_r + N_b$, and the distance at contact between a red and blue particles by $\epsilon_{br}=(\epsilon_r + \epsilon_b)/2$. 
The situation detailed above can be described by the overdamped Langevin SDEs
\begin{align} \label{sde}
\begin{aligned}
d {\bf X }_i (t) &=  \sqrt{2D_r}\, d{\bf W}_i(t)  - \nabla \tilde V_r( {\bf X}_i) \, dt \qquad 1\leq i\leq N_r,\\
d {\bf X }_i (t) &=  \sqrt{2D_b}\, d{\bf W}_i(t)  - \nabla \tilde V_b( {\bf X}_i) \, dt \qquad N_r+1 \leq i \leq N, 
\end{aligned}
\end{align}
where ${\bf X}_i \in \Omega \subset \mathbb R^d$, $d= 2, 3$, is the position of the $i$th particle and ${\bf W}_i$ a $d$-dimensional standard Brownian motion. We assume that $\Omega$ is a bounded domain. The boundary conditions due to collisions between particles and with the domain walls are
\begin{align}
\begin{aligned}
(d {\bf X }_i  - d {\bf X }_j) \cdot {\bf n} & = 0, & \quad &\text{on} \quad \| {\bf X}_i - {\bf X}_j \| = (\epsilon_i + \epsilon_j)/ 2,\\
d {\bf X }_i \cdot {\bf n} & = 0, & \quad &\text{on} \quad  \partial \Omega,
\end{aligned}
\end{align}
where $ \bf n $ denotes the outward unit normal.
The continuum-level model associated to this individual-based model was derived in \cite{Bruna:2012wu} using the method of matched-asymptotic expansions in the limit of low but finite volume fraction. If $v_d(\epsilon)$ is the volume of a $d$-dimensional ball of diameter $\epsilon$, then the volume fraction in the system is 
\begin{equation} \label{vol_fraction}
\Phi=  N_r v_d(\epsilon_r) + N_b v_d(\epsilon_b),
\end{equation}
assuming that the problem is nondimensionalised such that the domain $\Omega$ has unit volume, $|\Omega| = 1$. 
Because particles cannot overlap each other, in addition to the global constraint $\Phi \ll 1$ there is also a local constraint on the total volume density, defined as
\begin{equation} \label{total_vol_density}
\phi({\bf x}, t) = v_d(\epsilon_r) r({\bf x}, t) + v_d(\epsilon_b) b({\bf x}, t). 
\end{equation}
In particular, the local volume density cannot exceed the theoretical maximum allowed volume fraction, given by the Kepler conjecture. We note that $\Phi$ and $\phi$ are related via $\Phi = \int_\Omega \phi \, d {\bf x}$. 
 
The cross-diffusion model in \cite{Bruna:2012wu} is valid for any number of blue and red particles, $N_b$ and $N_r$. However, here we will consider the case that the number of both particles is large, such that $N_r-1 \approx N_r$,$N_b -1 \approx N_b$, as it simplifies the model slightly. In this case, the model reads \cite{Bruna:2012wu} 
\begin{subequations}
\label{pde_general}
\begin{align}
\label{pde_general_r}
\partial_t r   &=  D_r \nabla  \cdot \left[   ( 1 +
 \epsilon_r^d \alpha   r ) \nabla {  r} + \nabla V_r  r   +  \epsilon_{br}^d \big ( \beta_r \, { r}  \nabla { b} - \gamma_r { b}\nabla {  r}  +  \nabla ( \gamma_b  V_b - \gamma_r  V_r )  r  b\big ) \right],   
\\
\label{pde_general_b}
\partial_t b   &=  D_b \nabla   \cdot \left[   ( 1 +  \epsilon_b^d \alpha   b ) \nabla {  b} + \nabla V_b  b  +  \epsilon_{br}^d \big ( \beta_b \, { b}  \nabla { r} - \gamma_b { r}\nabla {  b}  +  \nabla \big( \gamma_r  V_r -  \gamma_b  V_b \big)  r  b\big ) \right],   
\end{align}
\end{subequations}
where $r=r({\bf x},t)$ and $b=b({\bf x},t)$ are the number densities of the red and blue species, respectively, depending on space and time. Consequently, meaningful solutions satisfy $r \ge 0$ with $\int_\Omega r\,  d{\bf x} = N_r$ and $b \ge 0$ with $\int_\Omega b \, d{\bf x} = N_b$.   In \eqref{pde_general},  $V_i = \tilde V_i/D_i$ are the rescaled potentials, and the parameters $\alpha$, $\beta_i$ and $\gamma_i$ depend on the geometry of the particles. For balls, they are given by
\begin{equation}
\label{coef_23d}
\begin{aligned}
\alpha &= \frac{2(d-1)\pi}{d} , \qquad   \beta_i =    \frac{2\pi}{d}  \frac{ [(d-1)D_i  +  dD_j]}{D_i + D_j}  ,  \qquad \gamma_i=  \frac{2\pi}{d} \frac{D_i}{D_i + D_j}  ,
\end{aligned}
\end{equation}
for $i = r$ or $b$, and space dimension $d=2$ or $3$. This system is an asymptotic expansion in $\epsilon_r$, $\epsilon_b$ (assuming that both small parameters are of the same asymptotic order,  $\epsilon_r \sim \epsilon_b \sim \epsilon$), valid up to order $\epsilon^d$. The nonlinear terms in \eqref{pde_general} correspond to the leading-order contribution of the pairwise particle interactions.  The asymptotic method used in \cite{Bruna:2012wu} could be extended if desired to evaluate higher-order terms coming from three or more particle interactions, as well as higher-order corrections in the pairwise interaction. This would result in higher-order terms in $\epsilon_i$ in \eqref{pde_general} (of order $\epsilon_i^{(d+1)}$ and higher) with quite some effort. However, it seems impossible to derive the full infinite series expansion. 

We will consider the system \eqref{pde_general} in $\Omega \times (0, T)$ with no-flux boundary conditions
\begin{subequations}
\label{bcs_general}
\begin{align}
\label{bcs_general_r}
0 & = {\bf n}  \cdot \left \{   ( 1 +
 \epsilon_r^d \alpha   r ) \nabla {  r} + \nabla V_r  r   +  \epsilon_{br}^d \big [ \beta_r \, { r}  \nabla { b} - \gamma_r { b}\nabla {  r}  +  \nabla ( \gamma_b  V_b - \gamma_r  V_r )  r  b\big ] \right \},   
\\
\label{bcs_general_b}
0 & = {\bf n}   \cdot \left \{   ( 1 +  \epsilon_b^d \alpha   b ) \nabla {  b} + \nabla V_b  b  +  \epsilon_{br}^d \big [ \beta_b \, { b}  \nabla { r} - \gamma_b { r}\nabla {  b}  +  \nabla \big( \gamma_r  V_r -  \gamma_b  V_b \big)  r  b\big ] \right \},   
\end{align}
\end{subequations}
on $\partial \Omega \times (0, T)$ and initial values
\begin{equation}\label{initial_general}
r({\bf x}, 0) = r_0({\bf x}), \qquad b({\bf x}, 0) = b_0({\bf x}).
\end{equation}

In order to analyze the cross-diffusion system \eqref{pde_general}, it is convenient to consider its associated gradient flow structure of the form \eqref{gradflow}. However, only the system in the  symmetric case where red and blue particles have same size and diffusivity can be rewritten in that form. For the general case we introduce a generalization of a gradient flow, namely an \emph{asymptotic gradient flow}, motivated by the underlying structure of the general system \eqref{pde_general}.

\subsection{Cross-diffusion system for particles of the same size and diffusivity} \label{sec:case1}

In this section we suppose that red and blue particles are of the same size, that is $\epsilon_r = \epsilon_b : = \epsilon$, and have the same diffusion coefficient, $D_r = D_b$. Without loss of generality, we take the diffusion coefficient equal one (this can be achieved by rescaling time). In this case, the cross-diffusion system \eqref{pde_general} can be written as
\begin{subequations}
\label{case1}
\begin{align}
\label{case1_r}
\partial_t r   &=  \nabla  \cdot \left[ (1 + \alpha \epsilon^d 
      r -  \gamma \epsilon^d { b} ) \nabla {  r}   + \beta \epsilon^d r  \nabla  b +  r  \nabla V_r + \gamma \epsilon^d \nabla \left (  V_b -  V_r \right )  r b \right] ,   
\\
\label{case1_b}
\partial_t b   &=  \nabla  \cdot \left[ ( 1 + \alpha  \epsilon^d b - \gamma \epsilon^d  r )  \nabla   b  + \beta \epsilon^d  b   \nabla  r  +  b  \nabla V_b + \gamma \epsilon^d \nabla \left ( V_r -  V_b \right )   r  b \right],
\end{align}
\end{subequations}
where $\beta_i$ and $\gamma_i$, for $i = r, b$, are now equal and simplify to $\gamma = \pi/d$ and $\beta = 2^{d-1}\gamma$, respectively. 


This cross-diffusion system can be used to describe a mixture of particles that are physically identically but that are driven by different potentials $V_r$ and $V_b$ (for example cells that are attracted to different food sources, or pedestrians that want to move in different directions). Moreover, it can also be used to model the scenario where the red and the blue particles are in fact identical, but one has knowledge about the initial distributions of each sub-population, $r_0$ and $b_0$. This is the scenario in many experimental set-ups that use noninvasive fluorescent tagging. 
On the other hand, if the red and blue particles are identical and initially indistinguishable, then one has that $r/N_r = b/N_b := p$ for all times. In this case, both equations \eqref{case1_r} and \eqref{case1_b} reduce to the same equation, which coincides with the equation for the evolution of a single population of hard spheres as expected \cite{Bruna:2012cg}. 

In the following we define $\bar \alpha = \epsilon^d \alpha$, $\bar \gamma  = \epsilon^d \gamma$, and the total number density
\begin{equation} 
\label{rho_case1}
\rho ({\bf x},t) :  = r ({\bf x},t) + b({\bf x},t).
\end{equation}
When particles have the same size and diffusivity we find that 
\begin{equation} \label{relation_case1}
\rho \equiv 2 \phi/ \bar \gamma, 
\end{equation}
where $ \phi$ is the total volume density given  in \eqref{total_vol_density}. 
Using $\rho$, the equations \eqref{case1} can be rewritten in the following form
\begin{subequations}
\label{case1_rho} 
\begin{align}
\label{case1_rho_r}
\partial_t r   &=  \nabla  \cdot \left[ (1  -   \bar \gamma \rho  ) \nabla {  r}   +  (\bar \alpha + \bar \gamma) r  \nabla  \rho +  r  \nabla V_r +  \bar \gamma \nabla \left(  V_b -  V_r \right)   r b \right],   
\\
\label{case1_rho_b}
\partial_t b   &=  \nabla  \cdot \left[ ( 1-   \bar \gamma \rho  )  \nabla   b  + (\bar \alpha + \bar \gamma)   b   \nabla  \rho  +  b  \nabla V_b + \bar \gamma \nabla \left (  V_r -  V_b \right )  r  b \right ],
\end{align}
where we have used that $ \beta = \alpha + \gamma$.  
\end{subequations}
It is straight-forward to see that the system \eqref{case1_rho} has a formal gradient flow structure, with an entropy functional given by 
\begin{align}
\label{entropy_case1}
E(r, b)  = \int_{\Omega}    r\log   r +    b \log   b +  r  V_r +  b V_b +  \frac{\bar \alpha}{2} \left(    r^2 + 2   r  b  +    b^2 \right) d  {\bf x}.
\end{align}
Using the corresponding entropy variables
\begin{align} \label{entropy_vars_case1}
\begin{aligned}
u &:= \partial_r E= \log  r + \bar \alpha  \rho + V_r,\\
v &:= \partial_b E= \log  b + \bar \alpha  \rho + V_b,
\end{aligned}
\end{align}
the system can be written in the form
\begin{align}\label{case1_entropy}
\partial_t \begin{pmatrix}  r\\  b \end{pmatrix}  = 
\nabla \cdot \left[   M (r ,b)  \nabla 
\begin{pmatrix} u  \\ v \end{pmatrix}\right],
\end{align}
with the symmetric mobility matrix 
\begin{align}\label{mobility_case1}
\renewcommand{\arraystretch}{1.0}
M (r,b)  = 
\begin{pmatrix}    r (1 - \bar \gamma   b) & \bar \gamma   r   b  \\
\bar \gamma   r   b &        b (1 -  \bar \gamma   r)  \end{pmatrix}.
\end{align}

\subsection{Cross-diffusion system for particles of different size and diffusivity}

In this section we attempt to write a gradient flow structure for the general cross-diffusion system \eqref{pde_general} guided by the symmetric case in the previous subsection,  \eqref{entropy_case1} and \eqref{mobility_case1}. We will see that this requires a generalization of the definition of gradient flow structure. We define the following entropy 
\begin{subequations}
\label{gradflow_general}
\begin{align}
\label{entropy_general}
E_\epsilon (  r,  b)  =  \int_{\Omega}    r\log  r +   b \log  b +  r  V_r +  b  V_b +\frac{\alpha}{2} \left( \epsilon_r^d \,  r^2 + 2 \epsilon_{br}^d \,  r b  + \epsilon_b^d \,  b^2 \right) d  {\bf x},
\end{align}
and mobility matrix
\begin{align}
\label{mobility_general}
\renewcommand{\arraystretch}{1.0}
M_\epsilon ( r,  b) = 
\begin{pmatrix} D_r  r (1 - \gamma_r \epsilon_{br}^d  b) & D_r \gamma_b \epsilon_{br}^d  r  b  \\
D_b \gamma_r \epsilon_{br}^d  r  b &      D_b  b (1 -  \gamma_b \epsilon_{br}^d  r)  \end{pmatrix}.
\end{align}
\end{subequations}

As mentioned earlier, we suppose that the red and blue particle sizes are of the same asymptotic order, namely  $\epsilon_r \sim \epsilon_b$. It is then convenient to introduce a single small parameter $\epsilon$ and the order one parameters $a_r, a_b$ and $a_{br}$ such that $\epsilon_i^d = a_i \epsilon^d$. Then the entropy and mobility can be expressed as $E_\epsilon \sim E_0 + \epsilon^d E_1$ and $M_\epsilon \sim M_0 + \epsilon^d M_1$, with
\begin{align}
\label{grad_flowgeneral_exp}
\begin{aligned}
E_0 &= \int_{\Omega}  r\log  r +   b \log  b +  r V_r +  b V_b  \, d {\bf x}, & \  E_1 &=  \frac{\alpha}{2}  \int_{\Omega}  a_r   r^2 + 2 a_{br}   r b  + a_b   b^2  \, d  {\bf x} ,\\
M_0 &= \text{diag}( D_r r,  D_b b),&  M_1 &=  a_{br}  r  b \begin{pmatrix} -D_r \gamma_r & D_r \gamma_b  \\
D_b \gamma_r &   - D_b \gamma_b  \end{pmatrix}.
\end{aligned}
\end{align}

Using \eqref{gradflow_general}, the general cross-diffusion system \eqref{pde_general} can be rewritten as
\begin{align} \label{gradflow_generalasy}
\partial_t \begin{pmatrix}  r\\  b \end{pmatrix}  = 
\nabla \cdot \left[   M_\epsilon  \nabla 
\begin{pmatrix} \partial_r E_\epsilon  \\ \partial_b E_\epsilon \end{pmatrix} - \epsilon^{2d} G \right],
\end{align}
where $G(r,b)$ is the vector
\begin{align} \label{vectorw}
G = \alpha a_{br} r b  \begin{pmatrix}  \gamma_r (\theta_r \nabla r - \theta_b \nabla b) \\  \gamma_b(\theta_b \nabla b - \theta_r \nabla r)  \end{pmatrix},
\end{align}
with 
\begin{equation} 
\label{thetar_thetab}
\theta_r = D_b a_{br} - D_r a_r,\qquad \theta_b = D_r a_{br} - D_b a_b. 
\end{equation}
Then it is easy to see that the gradient flow structure induced by \eqref{gradflow_general} and our system \eqref{pde_general} (or \eqref{gradflow_generalasy}) agree up to order $\epsilon^d$, which is the order of the asymptotic expansion that produced \eqref{pde_general} in the first place. In other words, the discrepancy between the system \eqref{pde_general} and the gradient-flow induced by \eqref{gradflow_general} is of order $\epsilon^{2d}$. Therefore, up to order $\epsilon^d$,  we can see  \eqref{gradflow_generalasy} 
as a gradient flow structure of our system. We will call this an asymptotic gradient flow structure; the precise definition will be made clear in the following section. 

Finally, we note that the system \eqref{gradflow_general} coincides with the gradient-flow structure in the case $D_r = D_b$ and $\epsilon_r = \epsilon_b$,  see \eqref{entropy_case1} and \eqref{mobility_case1}. Note that $G \equiv 0$ for the parameter values of the simpler system \eqref{case1}, as expected. Specifically, we find that if $D_r = D_b$ and $\epsilon_r = \epsilon_b$, then $\theta_ r= \theta_b = 0$. A natural question to ask is whether there are other parameter values for which $G(r,b) \equiv 0$ for all $r, b$. Imposing that $\theta_ r= \theta_b = 0$ leads to the condition $a_{br}^2 = a_r ab$, which in turn leads to $\epsilon_r = \epsilon_b$, and thus that $D_r = D_b$. Therefore, the only case for which \eqref{gradflow_general} is an exact gradient flow for the system is the case which we have already studied, that is when the particle sizes and diffusivities are equal.

\section{Gradient Flows and Asymptotic Gradient Flows close to Equilibrium}
\label{sec:closetoequilibrium}

In the following we provide a more detailed discussion on gradient flow structures and implications for the behavior close to equilibrium.

\subsection{Full gradient flow structure case}
\label{sec:gradient_flow}

In this subsection we analyze the behavior of system \eqref{case1_entropy} close to equilibrium. 
We follow the strategy outlined in the previous subsection, by proving uniqueness of equilibrium solutions 
and studying the stability and well-posedness of the system close to this equilibrium solution. 


We have seen in the previous subsection that the linear stability analysis for gradient flow structures reduces to showing that the mobility matrix $M$ is positive definite in the case of a strictly convex entropy functional $E$, cf. \cite{Schlake:2011wr}. We assume from now on:
\begin{enumerate}[leftmargin=10mm,label=(\Alph*\Roman*)]
\item \label{a:V} ~~Let $V_r, V_b \in H^1(\Omega)\cap L^{\infty}(\Omega)$. 
\end{enumerate}
We recall that in case of assumption \ref{a:V} an equilibrium solution $(r_\infty,b_\infty)$ exists and that the corresponding entropy variables $u_\infty$ and $v_\infty$ are constant. The determinant of the mobility matrix $M$ defined in \eqref{mobility_case1} is given by
\begin{equation} \label{det_case1}
\det M = rb (1- \bar \gamma \rho).
\end{equation}
Together with the positivity of diagonal entries we see that $M$ is positive definite if $\rho < 1 / \bar \gamma$. This constraint gives a local bound on the total local volume density (using \eqref{relation_case1}), namely $2 \phi < 1$. This is consistent with the asymptotic assumption that $\phi \ll 1$.
Hence, we define the set 
\begin{equation}\label{equ:set}
\mathcal{S}=\left\{\begin{pmatrix}
r\\b
\end{pmatrix}\in \mathbb{R}^2:r\geq 0,b \geq 0,r+b \leq \frac{1}{\overline{\gamma}}\right\},
\end{equation}
which will also use in the existence proof presented in Section \ref{sec:existence}. For stability and uniqueness it will be crucial to have solutions staying strictly in the interior of $\mathcal{S}$, due to the degeneracy of the mobility matrix on the boundary of $\mathcal{S}$.

\begin{theorem}[Linear stability]\label{linearstability}
The stationary solutions of the  system \eqref{case1_entropy} are unique and linearly stable with respect to small perturbations $\xi, \eta \in L^2(0,T;H^1(\Omega))$.
\end{theorem}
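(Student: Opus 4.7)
The plan is to carry out the two claims of the theorem in sequence, exploiting the fact that \eqref{case1_entropy} is a genuine gradient flow of the strictly convex entropy \eqref{entropy_case1} with mobility \eqref{mobility_case1}, positive definite in the interior of $\mathcal{S}$.

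\emph{Uniqueness of the equilibrium.} A stationary solution $(r_\infty,b_\infty)$ lying in the interior of $\mathcal{S}$ satisfies $M\nabla(u,v)^T = 0$, and since $\det M>0$ there, we obtain $\nabla u\equiv 0$ and $\nabla v\equiv 0$. Hence the entropy variables $u_\infty, v_\infty$ are constants and, by \eqref{entropy_vars_case1},
\begin{equation*}
r_\infty = \exp(u_\infty - \bar\alpha\rho_\infty - V_r),\qquad b_\infty = \exp(v_\infty - \bar\alpha\rho_\infty - V_b).
\end{equation*}
The two constants are fixed by the mass constraints $\int_\Omega r_\infty = N_r$ and $\int_\Omega b_\infty = N_b$. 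Uniqueness follows because the entropy $E$ is strictly convex (its Hessian is $\mathrm{diag}(1/r,1/b)+\bar\alpha\,\mathbf{1}\mathbf{1}^T$, which is positive definite on $\mathbb{R}_{+}^2$), so any two equilibria would have to coincide; equivalently, the map from the Lagrange multipliers $(u_\infty,v_\infty)$ to the masses is strictly monotone.

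\emph{Linearization and energy estimate.} Write $r = r_\infty + \xi$, $b = b_\infty + \eta$ with $\int_\Omega \xi = \int_\Omega \eta = 0$ (preserved by the flow). Linearizing \eqref{case1_entropy} around $(r_\infty,b_\infty)$ yields
\begin{equation*}
\partial_t \begin{pmatrix}\xi\\\eta\end{pmatrix} = \nabla\cdot\left[M(r_\infty,b_\infty)\,\nabla\!\left(H_E(r_\infty,b_\infty)\begin{pmatrix}\xi\\\eta\end{pmatrix}\right)\right],
\end{equation*}
where $H_E$ is the Hessian of $E$, positive definite at the equilibrium. As a Lyapunov functional I take the quadratic form of the relative entropy,
\begin{equation*}
\mathcal{E}(\xi,\eta) = \tfrac{1}{2}\int_\Omega (\xi,\eta)\, H_E(r_\infty,b_\infty)\,(\xi,\eta)^T\, d{\bf x}.
\end{equation*}
Differentiating in time, using the linearized equation, the no-flux boundary conditions and integration by parts gives
\begin{equation*}
\frac{d}{dt}\mathcal{E} = -\int_\Omega \left(\nabla H_E(\xi,\eta)^T\right)^T M(r_\infty,b_\infty)\,\nabla H_E(\xi,\eta)^T\, d{\bf x} \le 0,
\end{equation*}
since $M(r_\infty,b_\infty)$ is positive definite under the assumption that the equilibrium lies in the interior of $\mathcal{S}$. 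This already yields Lyapunov stability in the $L^2$ norm via equivalence of $\mathcal{E}$ with $\|\xi\|_2^2+\|\eta\|_2^2$ near the equilibrium. A Poincar\'e--Wirtinger inequality applied to the zero-mean quantities $H_E(\xi,\eta)^T$ upgrades the dissipation to give exponential decay and hence linear stability in $L^2(0,T;H^1(\Omega))$.

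The principal obstacle is controlling the degeneracy of $M$: $\det M = rb(1-\bar\gamma\rho)$ vanishes on $\partial\mathcal{S}$, so the estimate above only works when the perturbed densities remain strictly inside $\mathcal{S}$. I would handle this by taking the initial perturbation small enough in $L^\infty$ so that continuity of $r_\infty\pm\xi$, $b_\infty\pm\eta$ together with the Lyapunov decay of $\mathcal{E}$ keeps $(r,b)$ in a compact subset of $\mathrm{int}(\mathcal{S})$ where $M$ is uniformly positive definite; a bootstrap/continuation argument then propagates this bound to all $t\in(0,T)$. The existence of a positive lower bound for $\det M$ at the equilibrium — guaranteed by assumption \ref{a:V} together with the asymptotic regime $\Phi\ll 1$ — is precisely what makes the linearization quasi-coercive and closes the estimate.
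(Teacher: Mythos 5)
Your proof is essentially correct, and it reaches the same conclusion by a closely related but differently packaged argument. Two deviations are worth noting. For uniqueness, the paper characterizes stationary solutions directly as minimizers of the strictly convex entropy $E$ over the convex set of mass constraints and $(r,b)\in\mathcal{S}$; you instead argue from the PDE that $M\nabla(u,v)^T=0$ forces constant entropy variables (using invertibility of $M$ in the interior) and then invoke strict convexity. Both are valid; the paper's variational argument is a bit cleaner because it does not need to first locate the equilibrium in the interior before concluding. For the stability part, the paper linearizes in the dual (entropy) variables $u=u_\infty+\xi$, $v=v_\infty+\eta$, arriving at $E^{\star\prime\prime}(u_\infty,v_\infty)\,\partial_t(\xi,\eta)^T=\nabla\cdot(M(r_\infty,b_\infty)\nabla(\xi,\eta)^T)$, and then runs a spectral/eigenvalue argument: all eigenvalues are real by symmetry, and testing against $(\xi,\eta)$ forces $\lambda<0$. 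You linearize in the primal densities and use the quadratic form $\frac{1}{2}\int(\xi,\eta)H_E(\xi,\eta)^T$ as a Lyapunov functional. Since $E^{\star\prime\prime}=H_E^{-1}$, the change of variables $\tilde\xi=H_E\xi$ shows these are the same linearized operator, and the Lyapunov computation is precisely the quadratic form that appears in the paper's eigenvalue test. The spectral route gives the eigenvalue sign directly; the Lyapunov route buys an explicit decay estimate, but they rest on the same two facts (positive definiteness of the entropy Hessian / its inverse, and of $M$ at the interior equilibrium).

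One remark: your closing paragraph about a bootstrap to keep the perturbed densities in a compact subset of $\mathrm{int}(\mathcal{S})$ is not needed for \emph{linear} stability. Once you linearize, the coefficient matrix $M(r_\infty,b_\infty)$ is frozen at the equilibrium and does not depend on the perturbation, so there is no risk of the mobility degenerating along the linearized flow. What you actually need, and the paper states explicitly, is that the equilibrium itself lies strictly in the interior of $\mathcal{S}$ — a consequence of assumption \ref{a:V} — so that $E^{\star\prime\prime}$ and $M$ evaluated there are positive definite. The bootstrap/continuation argument belongs to a \emph{nonlinear} stability statement, which is the content of Theorem \ref{wellposedness}, not this one. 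Also, when invoking Poincar\'e--Wirtinger you should be a little careful: $\xi,\eta$ have zero mean, but $H_E(r_\infty,b_\infty)(\xi,\eta)^T$ generally does not (since $H_E$ is $x$-dependent), so one should subtract the spatial mean before applying the inequality; this does not affect the conclusion.
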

\begin{proof}
Due to the gradient flow structure, any stationary solution of \eqref{case1_entropy} is a minimizer of the entropy subject to the constraints of given mass and $(r(x),b(x)) \in {\mathcal S}$ almost everywhere. Due to the strict convexity of the entropy and the convexity of the constraint set, the minimizer is unique.

Let us consider the linearisation of system \eqref{case1_entropy} around the unique equilibrium, which corresponds to the constant entropy variables $(u_\infty, v_\infty)$. As we have seen before this is equivalent to have a linear expansion in $(r,b)$ and in the entropy variables $(u,v)$ , i.e. $u=u_\infty+\xi, v=v_\infty+\eta$. In the latter setting we obtain the following first order approximation 

\begin{align*}
E^\star{''}(u_\infty,v_\infty) \begin{pmatrix} 
\partial_t \xi\\ \partial_t \eta
\end{pmatrix}=
\begin{pmatrix}
\partial_u r(u_\infty,v_\infty) \partial_t \xi+\partial_v r(u_\infty,v_\infty) \partial_t \eta \\
\partial_u b(u_\infty,v_\infty) \partial_t \xi+\partial_v b(u_\infty,v_\infty) \partial_t \eta 
\end{pmatrix}=\nabla \cdot \left( M(r_\infty,b_\infty)\begin{pmatrix}
\nabla \xi\\ \nabla \eta
\end{pmatrix} \right),
\end{align*}
where $E^\star{''}$ denotes the Hessian of the dual entropy functional. Note that for the first order approximation, we also have no flux boundary conditions. A simple calculation shows that $E^\star{''}(u_\infty,v_\infty)$ as well as $M(r,b)$ are positive definite for $(r,b)$ in the interior of ${S}$, which is guaranteed everywhere for the stationary solution $(r_\infty,b_\infty)$. 
Stability of this linear system is equivalent to nonpositivity of all the real parts of  eigenvalues $\lambda$ in 
$$ \lambda  E^\star{''}(u_\infty,v_\infty)  \begin{pmatrix}
 \xi\\ \eta
\end{pmatrix} = \nabla \cdot \left(  M(r_\infty,b_\infty)\begin{pmatrix}
\nabla \xi\\ \nabla \eta
\end{pmatrix} \right) .$$
Note that due to the symmetry  of the eigenvalue problem, all eigenvalues are real. 
Moreover, we find
\begin{align*}
\lambda \int_\Omega E^\star{''}(u_\infty,v_\infty)\begin{pmatrix}
 \xi\\ \eta
\end{pmatrix}\cdot \begin{pmatrix}
 \xi\\  \eta
\end{pmatrix}\, d{\bf x}=-\int_\Omega M(r_\infty,b_\infty)\begin{pmatrix}
\nabla \xi\\ \nabla \eta
\end{pmatrix}\cdot\begin{pmatrix}
\nabla \xi\\ \nabla \eta
\end{pmatrix}\,  d{\bf x}  .
\end{align*} 
Since $E^\star{''}$ and $M(r,b)$ are positive definite, we conclude that $\lambda<0$, which implies linear stability.
\end{proof}

Next we consider the well-posedness close to equilibrium. We shall make use of the following auxiliary lemma:
\begin{lemma}\label{lemma6}
Let $V_r$ and $V_b$ satisfy assumption \ref{a:V} and let $V_r,V_b \in X$ with
$$X=L^{\infty}(0,T;H^2(\Omega))\cap L^2(0,T;H^3(\Omega)) \cap H^1(0,T;H^1(\Omega)).$$
Then the gradient of the dual entropy functional $E^*{'}:X \times X \to X \times X, \,(u,v) \mapsto (r,b)$, defined by \eqref{entropy_vars_case1}, is continuous.
\end{lemma}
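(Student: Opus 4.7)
The plan is to invert the nonlinear map $(r,b)\mapsto(u,v)$ defined in \eqref{entropy_vars_case1} by an explicit algebraic reduction, and then argue that the inverse is a continuous composition of smooth superpositions acting on the Banach algebra $X$. Setting $w := (u - V_r) - (v - V_b)$, the definition of the entropy variables yields $r/b = e^{w}$, hence
\[
b = \frac{\rho}{1+e^{w}}, \qquad r = \frac{\rho\, e^{w}}{1+e^{w}}, \qquad \rho := r+b.
\]
Substituting these into $u+v$ produces the scalar equation
\[
f(\rho) := 2\log\rho + 2\bar\alpha \rho \;=\; (u+v) - (V_r + V_b) - w + 2\log(1+e^{w}),
\]
and since $f'(\rho) = 2/\rho + 2\bar\alpha > 0$ on $(0,\infty)$, $f$ is a smooth diffeomorphism of $(0,\infty)$ onto $\mathbb{R}$, so $\rho$, and thereby $(r,b)$, is recovered from $(u,v,V_r,V_b)$ via the $C^{\infty}$ superpositions $\exp$, $\log(1+\exp(\cdot))$, $1/(1+\cdot)$, and $f^{-1}$.

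The next step is to verify that each of these operations acts continuously on $X$. The relevant structural facts are that, for $d \le 3$, $H^{2}(\Omega) \hookrightarrow C(\overline{\Omega})$ and $H^{s}(\Omega)$ is a Banach algebra for $s > d/2$. On a bounded neighbourhood of a reference point whose associated density pair lies strictly inside $\mathcal{S}$, the arguments of $\log$, $\exp$, and $f^{-1}$ stay in a compact subset of their domains, so Moser-type (Fa\`a di Bruno) estimates guarantee that composition with these $C^{\infty}$ maps preserves $L^{\infty}(0,T;H^{2}) \cap L^{2}(0,T;H^{3})$. For the time-regularity component $H^{1}(0,T;H^{1})$, one uses $\partial_{t}(F\circ g) = F'(g)\,\partial_{t} g$ together with $F'(g) \in L^{\infty}(0,T;H^{2}) \subset L^{\infty}(0,T;L^{\infty}(\Omega))$ and $\partial_{t} g \in L^{2}(0,T;H^{1})$. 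Continuity of $E^{*\prime}$ then follows from the mean value theorem in Banach spaces: writing the output difference as the integral of the Fr\'echet derivative along the segment joining two inputs, the bounded-derivative property of each superposition yields the Lipschitz estimate
\[
\|(r_{1},b_{1}) - (r_{2},b_{2})\|_{X \times X} \le C\, \|(u_{1},v_{1}) - (u_{2},v_{2})\|_{X \times X},
\]
with $C$ depending only on the $X$-norms of the inputs and on the distance of $(r,b)$ from $\partial \mathcal{S}$.

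The main obstacle will be the $L^{2}(0,T;H^{3})$ component of the composition estimates. Three spatial derivatives of $F \circ g$ produce, through Fa\`a di Bruno, the cubic term $F'''(g)(\nabla g)^{3}$, which in the critical dimension $d = 3$ must be shown to lie in $L^{2}_{t}L^{2}_{x}$. This is handled by the embedding $H^{1} \hookrightarrow L^{6}$, which gives $\|(\nabla g)^{3}\|_{L^{2}_{x}} \le C \|g\|_{H^{2}}^{3}$ pointwise in time, and then integrating in $t$ using $g \in L^{\infty}(0,T;H^{2})$. Tracking all Fa\`a di Bruno terms for each of the superpositions $f^{-1}$, $\exp$, and $\log(1+e^{\cdot})$ is the only laborious part of the argument but is otherwise routine.
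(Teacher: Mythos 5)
Your proposal is correct but takes a genuinely different route from the paper. The paper's proof never explicitly inverts the map: it writes out the identities for $\nabla u$, $\Delta u$, $\nabla\Delta u$ (and the same for $v$) in terms of $r$, $b$ and their spatial derivatives (equations \eqref{entropy_der1}--\eqref{entropy_der3}), then bootstraps: from $u,v\in L^\infty(0,T;H^2)\hookrightarrow L^\infty(\Omega_T)$ it deduces $0<\varepsilon\le r,b\le M$, then from $H^2\hookrightarrow W^{1,6}$ and the first-order identity it solves the (pointwise-invertible, since the Jacobian $\tilde h''$ is nonsingular) linear system for $\nabla r,\nabla b\in L^\infty(0,T;L^6)$, then passes through the second- and third-order identities to conclude $r,b\in X$. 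In contrast, you algebraically diagonalize the map via $w=(u-V_r)-(v-V_b)$, reduce to the scalar diffeomorphism $f(\rho)=2\log\rho+2\bar\alpha\rho$, and then invoke Moser/Fa\`a di Bruno superposition estimates on the Banach algebra $H^2(\Omega)$ to propagate regularity through $\exp$, $\log(1+e^{\cdot})$, $1/(1+\cdot)$ and $f^{-1}$. What your route buys: it yields a local Lipschitz estimate $\|(r_1,b_1)-(r_2,b_2)\|_{X\times X}\le C\|(u_1,v_1)-(u_2,v_2)\|_{X\times X}$, which \emph{is} continuity of the nonlinear map, whereas the paper's stated criterion ``$\|(r,b)\|_{X\times X}\le C\|(u,v)\|_{X\times X}$'' is strictly speaking the boundedness condition appropriate for linear operators, and the paper's proof really establishes well-definedness of $E^{*\prime}:X\times X\to X\times X$ together with an a priori bound rather than continuity proper. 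What the paper's route buys: it avoids the explicit inversion and the Fa\`a di Bruno bookkeeping, working entirely with the differentiated defining relations, which is closer in spirit to what is reused in the subsequent well-posedness argument. One small imprecision in your write-up: the only constraint that matters is $r,b\ge\varepsilon>0$ and $r,b\le M<\infty$ (which follows automatically from $u,v\in L^\infty$); the map \eqref{entropy_vars_case1} does not enforce $\rho<1/\bar\gamma$, so phrasing the constant's dependence as ``distance from $\partial\mathcal S$'' slightly overstates the required hypothesis, though it is not incorrect for points strictly interior to $\mathcal S$.
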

\begin{proof}
To verify continuity, we have to show the existence of a constant $C>0$ such that
\begin{equation}\label{equ:continuity}
\|(r,b)\|_{X\times X}\leq C \|(u,v)\|_{X\times X} \quad \forall (u,v)\in X\times X.\end{equation}
Given $(u,v)\in X\times X$, we calculate
\begin{align}\label{entropy_der1}
\begin{aligned}
\nabla u &=\frac{1}{r}\nabla r+\bar \alpha \nabla \rho +\nabla V_r, \qquad \nabla v =\frac{1}{b}\nabla b+\bar \alpha \nabla \rho +\nabla V_b,
\end{aligned}
\end{align}
\begin{align}\label{entropy_der2}
\begin{aligned}
\Delta u &=-\frac{1}{r^2} (\nabla r)^2 +\left(\frac{1}{r}+\bar \alpha\right)\Delta r +\bar \alpha \Delta b +\Delta V_r,\\
\Delta v &=-\frac{1}{b^2} (\nabla b)^2 +\left(\frac{1}{b}+\bar \alpha\right)\Delta b +\bar \alpha \Delta r +\Delta V_b,
\end{aligned}
\end{align}
and
\begin{align}\label{entropy_der3}
\begin{aligned}
\nabla \Delta u&=\frac{1}{r^3}\nabla r (\nabla r)^2-\frac{3}{r^2}\nabla r\Delta r+\left(\frac{1}{r}+\bar\alpha\right)\nabla \Delta r+\bar\alpha \nabla\Delta b+\nabla \Delta V_r,\\
\nabla \Delta v&=\frac{1}{b^3}\nabla b (\nabla b)^2-\frac{3}{b^2}\nabla b\Delta b+\left(\frac{1}{b}+\bar\alpha\right)\nabla \Delta b+\bar\alpha \nabla\Delta r+\nabla \Delta V_b.
\end{aligned}
\end{align}
From $H^2(\Omega)\hookrightarrow L^{\infty}(\Omega)$ in dimensions $d=2,3$ and using the definition of the entropy variables \eqref{entropy_vars_case1} we get that $r,b\in L^{\infty}(0,T;L^{\infty}(\Omega))$ and $r,b>\varepsilon$ for some positive $\varepsilon$. As $u,v\in L^{\infty}(0,T;H^2(\Omega))$ and $H^2(\Omega)\hookrightarrow W^{1,6}(\Omega)$, we get that $\nabla u, \nabla v \in L^{\infty}(0,T;L^6(\Omega))$ and therefore $\nabla u \nabla v \in L^{\infty}(0,T;L^3(\Omega))$.
Hence, relation \eqref{entropy_der1} implies that $\nabla r,\nabla b\in L^{\infty}(0,T;L^6(\Omega))$ and $\nabla r \nabla b \in L^{\infty}(0,T;L^3(\Omega))\hookrightarrow L^{\infty}(0,T;L^2(\Omega))$. Applying relation \eqref{entropy_der2}, we obtain that $\Delta r, \Delta b\in L^{\infty}(0,T;L^2(\Omega))$.
Since $u,v\in L^2(0,T;H^3(\Omega))$, the embedding $H^3(\Omega)\hookrightarrow W^{1,\infty}(\Omega)$ for dimensions $d=2,3$ as well as relation \eqref{entropy_der1} imply that $\nabla r,\nabla b\in L^2(0,T;L^\infty(\Omega))$. Together with relation \eqref{entropy_der3}, we obtain that $r,b \in L^2(0,T;H^3(\Omega))$, which implies continuity.
\end{proof}

\begin{theorem}[Well-posedness]\label{wellposedness}
Consider system \eqref{case1_entropy} with initial data $u_0,v_0 \in H^2(\Omega)$ and potentials $V_r,V_b\in H^3(\Omega)$. Furthermore let
\[\|u_0-u_\infty\|_{H^2(\Omega)}\leq \kappa \quad \text{and} \quad \|v_0-v_\infty\|_{H^2(\Omega)}\leq \kappa,\]
for $\kappa>0$ sufficiently small. Then, there exists a unique solution to system \eqref{case1_entropy} in 
\[B_R= \{(u,v):\|u-u_\infty\|_X\leq R,\, \|v-v_\infty\|_X\leq R\},\]
where 
\[ X=L^{\infty}(0,T;H^2(\Omega))\cap L^2(0,T;H^3(\Omega)) \cap H^1(0,T;H^1(\Omega))\] 
and $R$ is a constant depending on $\kappa$ and $T>0$ only. 
\end{theorem}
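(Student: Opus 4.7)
The plan is to reformulate \eqref{case1_entropy} in entropy variables, shift by the equilibrium, and apply Banach's fixed point theorem on $B_R$. Writing $u = u_\infty + \xi$ and $v = v_\infty + \eta$, the system takes the form
\begin{equation*}
\partial_t \begin{pmatrix} r(u,v) \\ b(u,v) \end{pmatrix} = \nabla \cdot \left( M(r,b) \nabla \begin{pmatrix} \xi \\ \eta \end{pmatrix} \right),
\end{equation*}
with no-flux boundary conditions, where the map $(u,v) \mapsto (r,b)$ is the gradient of the dual entropy studied in Lemma \ref{lemma6}. Since $u_\infty, v_\infty$ are constants, all space derivatives of $(\xi, \eta)$ coincide with those of $(u,v)$.

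First I would define a fixed-point map $T : B_R \to X \times X$ by sending $(\tilde \xi, \tilde \eta)$ to the solution $(\xi, \eta)$ of the linear parabolic system
\begin{equation*}
E^{\star\prime\prime}(\tilde u, \tilde v)\, \partial_t \begin{pmatrix} \xi \\ \eta \end{pmatrix} = \nabla \cdot \left( M(\tilde r, \tilde b) \nabla \begin{pmatrix} \xi \\ \eta \end{pmatrix} \right),
\end{equation*}
with the prescribed initial data $(u_0 - u_\infty, v_0 - v_\infty)$, where $(\tilde u, \tilde v) = (u_\infty + \tilde \xi, v_\infty + \tilde \eta)$ and $(\tilde r, \tilde b) = (E^\star)'(\tilde u, \tilde v)$. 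Provided $R$ is small enough, Lemma \ref{lemma6} and the embedding $H^2(\Omega) \hookrightarrow L^\infty(\Omega)$ (valid for $d=2,3$) guarantee that $(\tilde r, \tilde b)$ stays uniformly in the interior of $\mathcal S$, so that the mobility $M(\tilde r, \tilde b)$ and the Hessian $E^{\star\prime\prime}(\tilde u, \tilde v)$ are uniformly positive definite, as in the proof of Theorem \ref{linearstability}. The coefficients also inherit the regularity of $(\tilde u, \tilde v, V_r, V_b)$, so standard parabolic theory for uniformly parabolic systems in divergence form yields a unique weak solution together with the a priori estimate
\begin{equation*}
\|(\xi, \eta)\|_{X \times X} \le C_1(R,T) \bigl( \|u_0 - u_\infty\|_{H^2(\Omega)} + \|v_0 - v_\infty\|_{H^2(\Omega)} \bigr) \le 2 C_1(R,T)\, \kappa.
\end{equation*}

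Second, for self-mapping I would fix $R$ first in order to determine the coercivity constants and then shrink $\kappa$ so that $2 C_1(R,T) \kappa \le R$, giving $T(B_R) \subset B_R$. For contractivity, given $(\tilde \xi_i, \tilde \eta_i) \in B_R$ with images $(\xi_i, \eta_i)$, $i=1,2$, I would subtract the two linear equations, test against the differences, and exploit uniform positive definiteness together with Lipschitz bounds for $E^{\star\prime\prime}$, $(E^\star)'$ and $M$ on compact subsets of the interior of $\mathcal S$ (which follow from the computations in Lemma \ref{lemma6}). Since the nonlinear terms driving the iteration vanish at equilibrium, the resulting contraction constant $C_2(R,T)$ tends to $0$ as $R \to 0$; after choosing $R$ so small that $C_2 < 1$, Banach's theorem yields a unique fixed point in $B_R$, which by construction solves \eqref{case1_entropy}.

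The main obstacle is quantitative: all constants must be tracked carefully so that the smallness of $R$ and $\kappa$ can be chosen consistently and independently of $T$ only through the regularity of the potentials. In particular, the positivity of the mobility is preserved only because the equilibrium lies strictly inside $\mathcal S$ and Lemma \ref{lemma6} controls the $L^\infty$ distance of $(\tilde r, \tilde b)$ from $(r_\infty, b_\infty)$ by the $X$-distance of $(\tilde u, \tilde v)$ from $(u_\infty, v_\infty)$; this is where the requirement $(u_0, v_0) \in H^2$ and the use of the embeddings $H^2 \hookrightarrow W^{1,6}$ and $H^3 \hookrightarrow W^{1,\infty}$ appearing in the proof of Lemma \ref{lemma6} are essential. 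The contraction estimate then follows from a careful accounting of products of $H^2$, $W^{1,6}$ and $L^\infty$ terms, closing the argument.
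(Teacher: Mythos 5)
Your proof takes the same overall framework as the paper --- Banach's fixed point theorem on $B_R$ in the entropy variables, leaning on Lemma~\ref{lemma6} to control $(r,b)=(E^\star)'(u,v)$ in $X$ and on the strict interiority of the equilibrium in $\mathcal S$ to keep $M$ and $E^{\star\prime\prime}$ uniformly positive definite --- but you use a genuinely different splitting to define the fixed-point map. You run a quasilinear Picard iteration: you freeze $E^{\star\prime\prime}$ and $M$ at the previous iterate $(\tilde u,\tilde v)\in B_R$ and solve the resulting variable-coefficient linear parabolic system with the given initial data. The paper instead freezes the coefficients at the equilibrium, $E^{\star\prime\prime}(u_\infty,v_\infty)$ and $M(r_\infty,b_\infty)$, and shovels the commutators
$\nabla\cdot\bigl((M(r,b)-M(r_\infty,b_\infty))\nabla(u-u_\infty,v-v_\infty)\bigr)$
and
$(E^{\star\prime\prime}(u,v)-E^{\star\prime\prime}(u_\infty,v_\infty))\partial_t(u-u_\infty,v-v_\infty)$
into a right-hand side $F(u,v)$, so the linear solver $L$ has \emph{time-independent} coefficients (constant Hessian, and $M(r_\infty,b_\infty)\in H^3$ since $V_r,V_b\in H^3$). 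This matters for the maximal regularity step: establishing the solution in $X=L^\infty H^2\cap L^2 H^3\cap H^1 H^1$ is routine for a linear operator with fixed $H^3$ coefficients and a source in $Y=L^\infty L^2\cap L^2 H^1$, whereas with your time-dependent frozen coefficients you have to justify the same estimate when the coefficients are only as regular as $X$ itself (for instance, differentiating your equation twice in space produces products like $D^2 E^{\star\prime\prime}(\tilde u,\tilde v)\,\partial_t\xi$ that pair two low-regularity factors, which the paper's splitting relegates to $F$ where only $L^2H^1$ is needed). Your self-mapping ($2C_1(R,T)\kappa\le R$) and contraction ($C_2\to 0$ because the source in the difference equation is $O(\|\xi_2\|_X)\sim O(\kappa)$) estimates are conceptually equivalent to the paper's $\|F\|_Y\sim R^2+\kappa$ bookkeeping. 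In short: both are valid near-equilibrium fixed-point arguments; the paper's equilibrium-freezing makes the high-regularity parabolic estimate cleaner and the quadratic smallness of the perturbation explicit, while your Picard version is the more classical quasilinear route but leaves a bit more to check in the regularity theory for the variable-coefficient solver.
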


\begin{proof}
The proof is based on Banach's fixed point theorem. The corresponding fixed point operator is defined by the evolution of $u-u_\infty$ and $v-v_\infty$, which can be written as
\begin{align}\label{equ1} 
\begin{aligned}
E^\star{''}(u_\infty,v_\infty)\begin{pmatrix}
\partial_t (u-u_\infty) \\ \partial_t (v-v_\infty)
\end{pmatrix}&-\nabla \cdot \left( M(r_\infty,b_\infty)\begin{pmatrix}
\nabla u \\ \nabla v \end{pmatrix}\right)\\
&=\nabla \cdot \Bigl(( M(r,b)-M(r_\infty,b_\infty))
\begin{pmatrix}
\nabla(u-u_\infty)\\ \nabla(v-v_\infty)
\end{pmatrix} \Bigr)\\
&\quad-(E^\star{''}(u,v)-E^\star{''}(u_\infty,v_\infty))\begin{pmatrix}
\partial_t (u-u_\infty) \\ \partial_t (v-v_\infty)
\end{pmatrix}\\
&=:F(u,v),
\end{aligned}
\end{align}
where we used that $(r,b)=E^\star{'}(u,v)$. Note that by using a similar argumentation as in the proof of Lemma \ref{lemma6}, we can show that the stationary solutions $r_\infty,b_\infty$ are in $H^3(\Omega)$ assuming that the potentials $V_r,V_b$ are in $H^3(\Omega)$. Consider $(u,v)\in X\times X$ with the corresponding function $r=r(u,v),b=b(u,v)$ and let $L$ denote the solution of \eqref{equ1} for a given right-hand side. Then the fixed point operator is given by the concatenation of $L$ and $F$, that is
\[J=L\circ F:X \times X \to X\times X.\]
Note that Lemma \ref{lemma6} guarantees that $(r,b)=(r(u,v),b(u,v))\in X\times X$. Properties of the entropy functional guarantee that $E^\star{''}$ is bounded for $(u,v)\in X\times X$.
The operator $F$ defined in \eqref{equ1} maps from $X\times X$ into $Y\times Y$, where
\[Y:=L^{\infty}(0,T;L^2(\Omega))\cap L^2(0,T;H^1(\Omega)).\] 
Standard results for linear parabolic equations, see \cite{ol1968linear} or \cite{Evans199806}, ensure that the solution $(\tilde{u}-u_\infty,\tilde{v}-v_\infty)$ to equation \eqref{equ1} lie in $X\times X$.

To apply Banach's fixed point theorem, it remains to show that the operator $J$ is self-mapping into the ball $B_R$ and contractive.
The selfmapping property follows from the fact that 
\begin{align*}
\|(\tilde{u}-u_\infty,\tilde{v}-v_\infty)\|_{X\times X}&\leq C \Bigl(\underbrace{\|F(u,v)\|_{L^2}}_{\sim R^2}+\underbrace{\|(u_0-u_\infty,v_0-v_\infty)\|_{H_0^1}}_{\sim\kappa}\Bigr)=:R(\kappa).
\end{align*}
For the contractivity we consider $(u_1,v_1)\in X\times X$ and $(u_2,v_2)\in X \times X$ and deduce that:
\begin{align*}
\|F(u_1,v_1)-F(u_2,v_2)\|_Y&=\left\|\nabla \cdot \left( \left(M(E^*{'}(u_1,v_1))-M(E^*{'}(u_\infty,v_\infty))\right)\begin{pmatrix}
\nabla(u_1-u_\infty)\\ \nabla(v_1-v_\infty)
\end{pmatrix} \right)\right.\\
&\quad +(E^\star{''}(u_1,v_1)-E^\star{''}(u_\infty,v_\infty))\begin{pmatrix}
\partial_t (u_1-u_\infty) \\ \partial_t (v_1-v_\infty)
\end{pmatrix}\\
&\quad -\nabla \cdot \left(  \left(M(E^*{'}(u_2,v_2))-M(E^*{'}(u_\infty,v_\infty))\right)\begin{pmatrix}
\nabla(u_2-u_\infty)\\ \nabla(v_2-v_\infty)
\end{pmatrix} \right)\\
& \quad \left.-(E^\star{''}(u_2,v_2)-E^\star{''}(u_\infty,v_\infty))\begin{pmatrix}
\partial_t (u_2-u_\infty) \\ \partial_t (v_2-v_\infty)
\end{pmatrix}\right\|_Y
\end{align*}
Therefore 
\begin{align*}
\|F(u_1,v_1)-F(u_2,v_2)\|_Y &\leq \phantom{+} \left\|\nabla \cdot \left( \left(M(E^*{'}(u_1,v_1))-M(E^{*}{'} (u_2,v_2))\right)\begin{pmatrix}
\nabla(u_1-u_\infty)\\ \nabla(v_1-v_\infty)
\end{pmatrix} \right)\right\|_Y\\
&\quad+\left\|\nabla \cdot \left(  \left(M(E^*{'}(u_2,v_2))-M(E^*{'}(u_\infty,v_\infty))\right)\begin{pmatrix}
\nabla(u_1-u_2)\\ \nabla(v_1-v_2)
\end{pmatrix} \right)\right\|_Y\\
&\quad+\left\|(E^\star{''}(u_1,v_1)-E^\star{''}(u_2,v_2))\begin{pmatrix}
\partial_t (u_1-u_\infty) \\ \partial_t (v_1-v_\infty)
\end{pmatrix}\right\|_Y\\
&\quad+\left\|(E^\star{''}(u_2,v_2)-E^\star{''}(u_\infty,v_\infty))\begin{pmatrix}
\partial_t (u_1-u_2) \\ \partial_t (v_1-v_2)
\end{pmatrix}\right\|_Y\\
&\leq C_1R (\|u_1-u_2\|_X+\|v_1-v_2\|_X),
\end{align*}
for some constant $C_1>0$.
Hence, we have that
\begin{align*}
\|J(u_1,v_1)-J(u_2,v_2)\|_X\leq CR(\|u_1-v_1\|_X+\|u_2-v_2\|_X),
\end{align*}
for some $C>0$.
Choosing $\kappa$ and $R$ such that $R<\frac{1}{C}$, we can apply Banach's fixed point theorem which guarantees the existence of unique solutions $(u,v) \in B_R$.
\end{proof}

\subsection{Asymptotic Gradient Flow Structure}
\label{sec:asymptoticgradientflowstructure}

We have seen in Section \ref{sec:case1} that system \eqref{pde_general} with particles of same size satisfies a gradient flow structure, which is not valid for the general system due to terms of higher order in $\epsilon$. However, we want to interpret the latter as an {\em asymptotic gradient flow structure}, motivated by the fact that it was derived from an asymptotic expansion in $\epsilon$. For further motivation, consider a gradient flow structure for the density $w$ of the form
\begin{equation} \label{eq:wequation}
 \partial_t w = \nabla \cdot (M(w;\delta) \nabla E'(w;\delta)), 
\end{equation}
where both the mobility $M$ and the entropy $E$ depend on a small parameter $\delta > 0$. With an expansion of $M$ and $E$ in terms of $\delta$ as
$$ M(w;\delta) = \sum_{j=0}^\infty \delta^j M_j(w), \text{ and } E(w;\delta) = \sum_{j=0}^\infty \delta^j E_j(w), $$
we find 
$$ \partial_t w  = \sum_{k=0}^\infty \delta^k  \nabla \cdot \left( \sum_{j=0}^k M_j(w) \nabla E_{k-j}'(w) \right). $$
Truncating the expansion on the right-hand side at a finite $k$ does not yield a gradient flow structure in general, but up to terms of order $\delta^{k}$ it coincides with the gradient flow structure with mobility
$ \sum_{j=0}^k \delta^j M_j(w)$ and entropy $ \sum_{j=0}^k \delta^j E_j(w)$. In our case we deal with the example $k=1$ (with $\delta = \epsilon^d$), where we have
$$ \partial_t w =  \nabla \cdot (M_0(w) \nabla E_0'(w)) + \delta  \nabla \cdot (M_1(w) \nabla E_0'(w)+M_0(w) \nabla E_1'(w)).$$
Adding a term of order $\delta^2$, namely  $\delta^2  \nabla \cdot (M_1(w) \nabla E_1'(w))$, 
this equation becomes a gradient flow. This motivates a more general definition:

\begin{definition}
Let ${\mathcal F(.;\delta)}$ be a densely defined operator on some Hilbert space for $\delta \in (0,\delta_*)$. Then the dynamical system
\begin{equation} \label{eq:dynsyst}
\partial_t w = {\mathcal F}(w;\delta)
\end{equation}
is called an asymptotic gradient flow structure of order $k$ if there exist densely defined 
operators ${\mathcal G}_j$, $j={k+1},\ldots,2k$ such that for $\delta \in (0,\delta_*)$
\begin{equation}
{\mathcal F}(w;\delta) + \sum_{j=k+1}^{2k} \delta^j {\mathcal G}_{k+1+j}(w) = - {\mathcal M}(w;\delta) {\mathcal E}'(w;\delta)
\end{equation}
for some (parametric) energy functional ${\mathcal E}(\cdot;\delta)$, and ${\mathcal M}(w;\delta)$
is a densely  defined formally positive-definite operator for each $w$.
\end{definition} 

If an expansion of mobility and entropy up to order $k$ are available, it seems natural to perform a separate expansion to derive a lower order model that is a gradient flow as well. For complicated models and types of expansions as in \cite{Bruna:2012wu} or \cite{Bruna:2012cg} it seems not suitable to derive such however. Hence, we shall work with the asymptotic gradient flow concept below. Note that with the above notations we can rewrite \eqref{eq:dynsyst} as
\begin{equation}
\partial_t w   =  - {\mathcal M}(w;\delta) {\mathcal E}'(w;\delta)- \delta^{k+1} \sum_{j=0}^{k-1} \delta^j {\mathcal G}_{k+1+j}(w), 
\end{equation}
which opens the door to perturbation arguments in the analysis of \eqref{eq:dynsyst} for $\delta$ sufficiently small. 

In the remainder of this section we will highlight in particular the use of asymptotic gradient flow structures close to equilibrium. Let $w_\infty^{\delta}$ denote the  equilibrium solution, which is a minimizer of the energy functional on the manifold defined by ${\mathcal M}$. Hence $w_{\infty}^\delta$ solves ${\mathcal M}(w;\delta) {\mathcal E}'(w_\infty^\delta;\delta) = 0$ for any $w$. In the case of \eqref{eq:wequation} it typically means that $E'(w_\infty^\delta;\delta)$ is constant. In order to prove the existence of a stationary solution of \eqref{eq:dynsyst} one can then try the following strategy: first of all compute $w_\infty^\delta$ (or prove at least its existence and uniqueness by variational principles) and then use the equation 
$$
{\mathcal M}(w_\infty^\delta;\delta) {\mathcal E}'(w;\delta) = -\delta^{k+1} \sum_{j=0}^{k-1} \delta^j {\mathcal G}_{k+1+j}(w) + ({\mathcal M}(w_\infty^\delta;\delta) - {\mathcal M}(w ;\delta)) ( {\mathcal E}'(w;\delta) -  {\mathcal E}'(w_\infty^\delta;\delta) ) $$ 
as the basis of a fixed-point argument, freezing $w$ on the right-hand side. Since the terms on the right-hand side are of high order in $\delta$ or of second order in terms of $w-w_\infty^\delta$, there is some hope of contractivity of the fixed-point operator close to equilibrium $w_\infty^\delta$. Such an approach can also yield some structural insight into the stationary solution, since it will be a higher order perturbation of $w_\infty^\delta$. The same idea can be employed to analyze transient solutions of \eqref{eq:dynsyst}, since
\begin{align*}
\partial_t w + {\mathcal M}(w_\infty^\delta;\delta) {\mathcal E}'(w;\delta) = &-\delta^{k+1} \sum_{j=0}^{k-1} \delta^j {\mathcal G}_{k+1+j}(w) \\
&+ ({\mathcal M}(w_\infty^\delta;\delta) - {\mathcal M}(w ;\delta)) ( {\mathcal E}'(w;\delta) -  {\mathcal E}'(w_\infty^\delta;\delta) ) .
\end{align*}
If $ {\mathcal M}(w_\infty^\delta;\delta) $ is invertible and ${\mathcal E}(\cdot;\delta)$ is strictly convex on its domain, one can directly apply variational techniques to analyze the fixed point operator. In particular it can be rather beneficial to set up the fixed-point operator in dual (or entropy) variables $z = {\mathcal E}'(w;\delta)$ instead. 

Finally let us comment on the linear stability analysis around a stationary solution $w_*^\delta$. Using a similar way of expanding the equation around $w_\infty^\delta$, the linearised problem for a variable $\tilde w$ is given by
\begin{eqnarray*} \partial_t \tilde w + {\mathcal M}(w_\infty^\delta;\delta) ({\mathcal E}''(w_*^\delta;\delta) \tilde w) &=& -\delta^{k+1} \sum_{j=0}^{k-1} \delta^j {\mathcal G}_{k+1+j}'(w_*^\delta)\tilde w + \\&& ({\mathcal M}(w_\infty^\delta;\delta) - {\mathcal M}(w_*^\delta;\delta)) ( {\mathcal E}''(w_*^\delta;\delta)\tilde w) - \\  && 
  ( {\mathcal M}'(w_*^\delta;\delta)\tilde w) ( {\mathcal E}'(w_*^\delta;\delta) -  {\mathcal E}'(w_\infty^\delta;\delta) ), \end{eqnarray*}
 where we denote by ${\mathcal E}'$ and ${\mathcal M}'$ the derivatives with respect to $w$ at fixed $\delta$.
Due to positive definiteness of ${\mathcal E}''(w_*^\delta;\delta)$, this system can be interpreted as a linear equation for the linearised entropy variable $\tilde z = {\mathcal E}''(w_*^\delta;\delta) \tilde w$, which is equivalent to considering linear stability directly in the transformed equation for the entropy variable $z$ as performed in \cite{Schlake:2011wr}. Using the simplified notation ${\mathcal A}={\mathcal E}''(w_*^\delta;\delta)^{-1}$ and ${\mathcal B}={\mathcal M}(w_\infty^\delta;\delta)$, we obtain 
\begin{eqnarray*} {\mathcal A} \partial_t \tilde z + {\mathcal B}\tilde z &=& -\delta^{k+1} \sum_{j=0}^{k-1} \delta^j {\mathcal G}_{k+1+j}'(w_*^\delta){\mathcal A} \tilde z + ({\mathcal B} - {\mathcal M}(w_*^\delta;\delta)) \tilde z - \\  && 
  - ( {\mathcal M}'(w_*^\delta;\delta){\mathcal A} \tilde z) ( {\mathcal E}'(w_*^\delta;\delta) -  {\mathcal E}'(w_\infty^\delta;\delta) ). \end{eqnarray*}
In the case of a gradient flow (${\mathcal G}_j \equiv 0$, $w_*^\delta = w_\infty^\delta$) this reduces to 
$$  {\mathcal A} \partial_t \tilde z + {\mathcal B}\tilde z = 0, $$
which is stable if ${\mathcal A}$ and ${\mathcal B}$ are positive definite.
In the asymptotic gradient flow case, with $w_*^\delta = w_\infty^\delta + \mathcal O(\delta^{k+1})$, we can formally write the linearised problem as
\begin{equation}
{\mathcal A} \partial_t \tilde z + ({\mathcal B}+ \delta^{k+1} {\mathcal C})\tilde z = 0, 
\end{equation}
and hence expect linear stability also for $w_*^\delta$ if $\delta$ is sufficiently small. 

The application of the above strategies to prove existence of solutions and linear stability to a concrete model obviously depends on an appropriate choice of topologies. In the remaining part of this section we focus on the analysis of the asymptotic gradient flow of the general model.

\subsection{Asymptotic gradient flow structure case}
\label{sec:asymptotic_gradient_flow}


First we study the existence of stationary solutions to \eqref{gradflow_generalasy}. Then we discuss stability of stationary states following the ideas presented in subsection \ref{sec:asymptoticgradientflowstructure}.

Note that for $\epsilon=0$, the equilibrium solutions are given by $(r_\infty,b_\infty)=(C_r e^{-V_r},C_b e^{-V_b})$, with constants $C_r$ and  $C_b$ depending on the initial masses only. Hence $(r_{\infty}, b_{\infty})$ are bounded for $V_r$ and $V_b$ satisfying assumption \ref{a:V}. For $\epsilon>0$, the equilibrium solutions are a $\mathcal{O}(\epsilon^d)$ perturbation in $L^{\infty}$ and therefore also uniformly bounded. 
\begin{theorem}[Existence of stationary solutions]\label{theorem2} 
Consider system \eqref{gradflow_generalasy} with potentials $V_r,V_b\in H^3(\Omega)$. Then there exists a unique stationary state $(u_*, v_*)$ to system \eqref{gradflow_generalasy} in 
\[B_R= \{(u,v):\|u-u_\infty\|_X \leq R,\, \|v-v_\infty\|_X \leq R\},\]
where $X=H^3(\Omega)$ and $R$ depending on $\epsilon$ and $T>0$ only. 
\end{theorem}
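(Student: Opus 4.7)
The plan is to realize the stationary version of \eqref{gradflow_generalasy} as a Banach fixed-point problem in the entropy variables $(u,v) = (\partial_r E_\epsilon, \partial_b E_\epsilon)$, following exactly the template of Section \ref{sec:asymptoticgradientflowstructure} with $k=1$ and $\delta = \epsilon^d$. First I would establish the anchor: the equilibrium $(r_\infty^\epsilon, b_\infty^\epsilon)$ of the truncated gradient flow (i.e.\ the minimizer of $E_\epsilon$ under the prescribed mass constraints, without the remainder $G$) exists, is unique by strict convexity of $E_\epsilon$, has constant entropy variables $(u_\infty, v_\infty)$, and—arguing as in the proof of Theorem \ref{wellposedness}—belongs to $H^3(\Omega)$ with $(r_\infty^\epsilon, b_\infty^\epsilon) = (C_r e^{-V_r}, C_b e^{-V_b}) + \mathcal{O}(\epsilon^d)$ in $H^3$. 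The stationary equation can then be rewritten as
\begin{align*}
\nabla\cdot\Bigl[M_\epsilon(r_\infty^\epsilon,b_\infty^\epsilon)\nabla\begin{pmatrix}u\\v\end{pmatrix}\Bigr]
= \epsilon^{2d}\nabla\cdot G(r,b)
- \nabla\cdot\Bigl[(M_\epsilon(r_\infty^\epsilon,b_\infty^\epsilon) - M_\epsilon(r,b))\nabla\begin{pmatrix}u-u_\infty\\v-v_\infty\end{pmatrix}\Bigr],
\end{align*}
with $(r,b)=(E_\epsilon^{*})'(u,v)$ and no-flux boundary conditions. Freezing $(u,v) \in B_R$ on the right and solving the resulting linear divergence-form elliptic system (subject to mass constraints fixing the mean of $\tilde r, \tilde b$) for $(\tilde u, \tilde v)$ defines the candidate fixed-point operator $J$.

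The linear problem is well posed because, for $\epsilon$ sufficiently small, $(r_\infty^\epsilon, b_\infty^\epsilon)$ lies strictly in the interior of the admissible set where $M_\epsilon$ is positive definite, so the coefficient matrix is uniformly elliptic. Standard elliptic regularity, combined with the $H^3$-regularity of the coefficients, yields $(\tilde u - u_\infty, \tilde v - v_\infty) \in H^3(\Omega)$ together with the quantitative bound $\|(\tilde u - u_\infty, \tilde v - v_\infty)\|_{X\times X} \le C\|\text{RHS}\|_{H^1\times H^1}$, where $X = H^3(\Omega)$. Splitting the right-hand side into the explicit $G$-term and the quadratic correction, and using Lemma \ref{lemma6} together with the algebra property of $H^2(\Omega)$ in dimensions $d=2,3$, I get
\begin{equation*}
\|J(u,v) - (u_\infty,v_\infty)\|_{X\times X} \le C(\epsilon^{2d} + R^2).
\end{equation*}
Choosing $R$ of order $\epsilon^{2d}$ with a sufficiently large constant makes $J$ self-mapping on $B_R$. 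The same split, together with the Lipschitz continuity of $(E_\epsilon^{*})'$ and of $(r,b)\mapsto M_\epsilon(r,b)$ on bounded subsets of the admissible set, gives
\begin{equation*}
\|J(u_1,v_1) - J(u_2,v_2)\|_{X\times X} \le C(\epsilon^{2d} + R)\,\|(u_1-u_2, v_1-v_2)\|_{X\times X},
\end{equation*}
which is a strict contraction once $\epsilon$ (and hence $R$) is small. Banach's fixed-point theorem then delivers the unique $(u_*, v_*) \in B_R$.

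The main obstacle is the regularity-and-positivity bookkeeping that ensures each step of the iteration remains inside the regime where the elliptic estimates and the contraction constants are uniform. Specifically, one must check that the map $(u,v)\mapsto(r,b)=(E_\epsilon^{*})'(u,v)$ from Lemma \ref{lemma6} preserves the strict interior constraint on $(r,b)$ for every $(u,v) \in B_R$, so that $M_\epsilon(r,b)$ stays positive definite and the Lipschitz bounds on $M_\epsilon$ and $G$ remain finite; this is where closeness to $(u_\infty, v_\infty)$—and hence smallness of $\epsilon$—enters crucially. One must also consistently handle the mass constraints $\int r = N_r$, $\int b = N_b$, which in entropy variables amounts to fixing two scalar mean-value parameters inside the fixed-point iteration; this is standard but has to be tracked carefully to land in the same ball $B_R$.
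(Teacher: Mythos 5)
Your proposal is essentially the paper's own proof: decompose the stationary equation around the minimizer $(r_\infty,b_\infty)$ of $E_\epsilon$, freeze $(u,v)$ on the right to define a fixed-point operator $J = L\circ F$ with $L$ the linear elliptic solve, use Lemma~\ref{lemma6}-type bootstrap regularity plus elliptic estimates to stay in $X=H^3(\Omega)$, and close with the same quadratic self-mapping bound $\sim R^2+\epsilon^{2d}$ and contraction factor $\sim R+\epsilon^{2d}$ feeding Banach's fixed-point theorem. You additionally flag the need to handle the mass constraints (fixing mean values in the Neumann elliptic solve) and to verify that the iteration preserves strict interiority of $(r,b)$ so that $M_\epsilon$ stays positive definite---genuine technical points that the paper's proof leaves implicit.
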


\begin{proof}
We follow the ideas detailed in Subsection \ref{sec:asymptoticgradientflowstructure} and define a fixed point operator close to equilibrium. Denote by $(r_\infty,b_\infty)$ the minimizer of the entropy functional $E_\epsilon(r,b)$, which exists as the entropy functional is strictly convex. Then any stationary solution to system \eqref{gradflow_generalasy} exists has to satisfy
\begin{align}\label{equ2}
-\nabla \cdot \left( M(r_\infty,b_\infty)
\begin{pmatrix}
\nabla u \\ \nabla v 
\end{pmatrix}\right)
&=\nabla \cdot \left(-\epsilon^{2d} G(r,b)+ \left(M(r,b)-M(r_\infty,b_\infty)\right)\begin{pmatrix} \nabla(u-u_\infty)\\ \nabla(v-v_\infty)
\end{pmatrix} \right)\nonumber\\
&=:F(u,v).
\end{align}
Similar arguments as in Lemma \ref{lemma6} ensure that for $(u,v)\in X\times X$ the functions $r=r(u,v)$ and $b=b(u,u)$ lie in $X\times X$.
Let $L$ denote the solution operator to \eqref{equ2} for a given right-hand side $F(u,v)$. Then the fixed point operator is constructed by:
\[J=L\circ F:X \times X \to X\times X.\]
Hence, we can conclude that $F$ maps from $X\times X$ into $Y\times Y$, where $Y=H^1(\Omega)$.
Employing results about the elliptic operator, cf. \cite{gilbarg2015elliptic} or \cite{Evans199806}, we obtain that the solution $(\tilde{u},\tilde{v})$ to equation \eqref{equ2} is in $X\times X$.

To apply Banach's fixed point theorem, it remains to show that the operator $J$ is self-mapping into the ball $B_R$ and contractive. The self-mapping property follows from the fact that 
\begin{align*}
\|(\tilde{u},\tilde{v})\|_{X\times X}&\leq \tilde{C} \underbrace{\|F(u,v)\|_{L^2}}_{\sim R^2+\epsilon^{2d}}=:R(\epsilon).
\end{align*}
For the contractivity we consider $(u_1,v_1)\in X\times X$ and $(u_2,v_2)\in X \times X$. Then
\begin{align*}
\|F(u_1,v_1)&-F(u_2,v_2)\|_Y=\left\|\nabla \cdot \left( -\epsilon^{2d} G(E^*{'}(u_1,v_1))d +\epsilon^{2d} G(E^*{'}(u_2,v_2))\right)\right.\\
&\qquad+\nabla \cdot \left( \left(M(E^*{'}(u_1,v_1))-M(E^*{'}(u_\infty,v_\infty))\right)\begin{pmatrix}
\nabla(u_1-u_\infty)\\ \nabla(v_1-v_\infty)
\end{pmatrix} \right)\\
&\left.\qquad -\nabla \cdot \left( \left(M(E^*{'}(u_2,v_2))-M(E^*{'}(u_\infty,v_\infty))\right)\begin{pmatrix}
\nabla(u_2-u_\infty)\\ \nabla(v_2-v_\infty)
\end{pmatrix} \right)\right\|_Y.
\end{align*}
Therefore 
\begin{align*}
\|F(u_1,v_1)-F(u_2,v_2)\|_Y &\leq \left\|\nabla \cdot \left(\epsilon^{2d} G(E^*{'}(u_1,v_1))-\epsilon^{2d} G(E^*{'}(u_2,v_2)\right)\right\|_Y\\
&\quad+\left\|\nabla \cdot \left( \left(M(E^*{'}(u_1,v_1))-M(E^{*}{'} (u_2,v_2))\right)\begin{pmatrix}
\nabla(u_1-u_\infty)\\ \nabla(v_1-v_\infty)
\end{pmatrix} \right)\right\|_Y\\
&\quad+\left\|\nabla \cdot \left(  \left(M(E^*{'}(u_2,v_2))-M(E^*{'}(u_\infty,v_\infty))\right)\begin{pmatrix}
\nabla(u_1-u_2)\\ \nabla(v_1-v_2)
\end{pmatrix} \right)\right\|_Y\\
&\leq\epsilon^{2d} C_1 \left(\left\| r_1-r_2\right\|_X +\left\|b_1-b_2\right\|_X\right)+ C_2R (\|u_1-u_2\|_X+\|v_1-v_2\|_X)\\
&\leq \left(\epsilon^{2d} C_3+2C_1R\right)(\|u_1-u_2\|_X+\|v_1-v_2\|_X),
\end{align*}
for some constants $C_1,C_2,C_3>0$ and therefore
\begin{align*}
\|J(u_1,v_1)-J(u_2,v_2)\|_X\leq \tilde{C}  \left(\epsilon^{2d} C_3+2C_1R\right) (\|u_1-v_1\|_X+\|u_2-v_2\|_X),
\end{align*}
for some $C>0$. Choosing $R$ and $\epsilon$ such that $$\tilde{C} \left(\epsilon^{2d}C_3 +2C_1R\right)<1,$$ we can apply Banach's fixed point theorem which guarantees the existence of unique solutions $(u_*,v_*) \in B_R$.
\end{proof}

A direct consequence of the proof is the closeness of the stationary solution $(u_*,v_*)$ to the gradient flow solution $(u_\infty,v_\infty)$:

\begin{corollary} \label{corollary_closeness}
Let the assumptions of Theorem \ref{theorem2} be satisfied. Then there exists a constant $C > 0$ such that for $\epsilon$ sufficiently small
\begin{equation}
\Vert u_* - u_\infty \Vert_X + \Vert v_* - v_\infty \Vert_X \leq C \epsilon^{2d}.
\end{equation}
\end{corollary}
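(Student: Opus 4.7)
The plan is to exploit the fixed-point equation from the proof of Theorem \ref{theorem2}, rewritten in terms of the deviation $(\tilde u, \tilde v) := (u_* - u_\infty, v_* - v_\infty)$, and to observe that the right-hand side collapses to a pure $\epsilon^{2d}$ remainder when evaluated at the gradient-flow equilibrium $(u_\infty, v_\infty)$.

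First I would note that $(u_\infty, v_\infty)$ are spatially constant, being Lagrange multipliers for the two mass constraints of the strictly convex minimisation of $E_\epsilon$. In particular $\nabla u_\infty = \nabla v_\infty = 0$, and plugging $(u,v) = (u_\infty, v_\infty)$ (so that also $(r,b) = (r_\infty, b_\infty)$) into the operator $F$ appearing in equation \eqref{equ2} kills the second summand $(M(r,b) - M(r_\infty, b_\infty))\nabla(u - u_\infty, v - v_\infty)^T$ entirely, leaving
\begin{equation*}
F(u_\infty, v_\infty) \;=\; -\epsilon^{2d}\, \nabla \cdot G(r_\infty, b_\infty).
\end{equation*}
Since $V_r, V_b \in H^3(\Omega)$ forces $(r_\infty, b_\infty) \in H^3(\Omega)$ with bounds uniform in $\epsilon$ (as remarked in the proof of Theorem \ref{theorem2}), the quantity $G(r_\infty, b_\infty)$ is bounded in $H^2(\Omega)$ and hence
\begin{equation*}
\|F(u_\infty, v_\infty)\|_Y \;\leq\; \tilde c\,\epsilon^{2d}
\end{equation*}
with $\tilde c$ independent of $\epsilon$.

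Next I would combine this smallness with the Lipschitz estimate already established in Theorem \ref{theorem2}, namely
\begin{equation*}
\|F(u_*, v_*) - F(u_\infty, v_\infty)\|_Y \;\leq\; \bigl(\epsilon^{2d} C_3 + 2 C_1 R\bigr)\bigl(\|\tilde u\|_X + \|\tilde v\|_X\bigr),
\end{equation*}
and with the boundedness (constant $\tilde C$) of the elliptic solution operator $L:Y\times Y \to X\times X$, to obtain
\begin{equation*}
\|\tilde u\|_X + \|\tilde v\|_X \;\leq\; \tilde C\,\|F(u_*, v_*)\|_Y \;\leq\; q\,\bigl(\|\tilde u\|_X + \|\tilde v\|_X\bigr) + \tilde C\,\tilde c\,\epsilon^{2d},
\end{equation*}
where $q := \tilde C(\epsilon^{2d} C_3 + 2 C_1 R) < 1$ by the very smallness choice of $R$ and $\epsilon$ that enforces contractivity in Theorem \ref{theorem2}. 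Absorbing the $q$-term on the left yields the claimed bound with $C = \tilde C \tilde c/(1-q)$.

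The main obstacle is the $Y$-control of the inhomogeneity $G(r_\infty, b_\infty)$ uniformly in $\epsilon$, since $G$ involves $\nabla r_\infty, \nabla b_\infty$ and the product $r_\infty b_\infty$. This requires transferring the $H^3$-regularity of $V_r, V_b$ onto $(r_\infty, b_\infty)$. The nonlinear equilibrium relations $u_\infty = \log r_\infty + V_r + \alpha(\epsilon_r^d r_\infty + \epsilon_{br}^d b_\infty)$ and its analogue for $v_\infty$ exhibit $(r_\infty, b_\infty)$ as an $O(\epsilon^d)$-perturbation of the explicit $\epsilon=0$ minimiser $(C_r e^{-V_r}, C_b e^{-V_b})$, so an implicit-function argument with $\epsilon^d$-small corrections produces the desired uniform regularity. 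Once this ingredient is in hand, the rest of the proof is merely a re-application of the contraction estimates of Theorem \ref{theorem2} at the single specific pair $(u_*, v_*), (u_\infty, v_\infty)$ rather than at two arbitrary points of $B_R$.
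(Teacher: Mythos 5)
Your argument is correct and follows essentially the same route as the paper's (very terse) proof: plug $(u_*,v_*)$ into the fixed-point equation \eqref{equ2}, observe that the right-hand side splits into a term of order $\epsilon^{2d}$ (the $\nabla\cdot G$ contribution) and a term that is small relative to $\|u_*-u_\infty\|_X+\|v_*-v_\infty\|_X$ by the contraction constant, then invert the elliptic operator and absorb. The only cosmetic difference is that you center the $G$-term at $(r_\infty,b_\infty)$ via a further application of the Lipschitz estimate, whereas the paper bounds $G(r_*,b_*)$ directly using that $(r_*,b_*)$ lies in a bounded set; both immediately give the $O(\epsilon^{2d})$ estimate once the uniform $H^3$-regularity of $(r_\infty,b_\infty)$ (which you correctly flag and which the paper remarks on in the proof of Theorem \ref{theorem2}) is in hand.
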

\begin{proof}
We use \eqref{equ2} rewritten as 
\begin{align*}
&-\nabla \cdot \left( M(r_\infty,b_\infty)\begin{pmatrix}
\nabla (u_* - u_\infty) \\ \nabla (v_*-v_\infty) \end{pmatrix}\right) = \\ & \qquad \qquad \qquad \qquad {\nabla \cdot \left(-\epsilon^{2d} G(r,b)+ \left(M(r_*,b_*)-M(r_\infty,b_\infty)\right)\begin{pmatrix}
\nabla(u_*-u_\infty)\\ \nabla(v_*-v_\infty)
\end{pmatrix} \right)}.
\end{align*}
and the properties of the operators used above immediately imply the assertion.
\end{proof}


We conclude this section by discussing linear stability of system \eqref{gradflow_generalasy} close to its stationary states $(u_*, v_*)$. Following the ideas presented in Section \ref{sec:asymptoticgradientflowstructure} we rewrite \eqref{gradflow_generalasy} as
\begin{align*}
\partial_t (r,b) &=\mathcal{M}(r,b)\mathcal{E}'(r,b)-\epsilon^{2d}\mathcal{G}(r,b).
\end{align*}
Then
\begin{align}\label{stationary_stability1}
\begin{split}
\partial_t (r,b) -\mathcal{M}(r_\infty,b_\infty)\mathcal{E}'(r,b)=&-\epsilon^{2d}\mathcal{G}(r,b)\\
&+(\mathcal{M}(r,b)-\mathcal{M}(r_\infty,b_\infty))(\mathcal{E}'(r,b)-\mathcal{E}'(r_\infty,b_\infty)).
\end{split}
\end{align}
The linearisation of equation \eqref{stationary_stability1} around $(r_*,b_*)$ is given by the following system for  $(\tilde{r},\tilde{b})$:
\begin{align*}
\begin{aligned}
\partial_t (\tilde{r},\tilde{b})-\mathcal{M}(r_\infty,b_\infty)(\mathcal{E}{''}(r_*,b_*)(\tilde{r},\tilde{b}))&=-\epsilon^{2d}\mathcal{G}'(r_*,b_*)(\tilde{r},\tilde{b})\\
&\quad+(\mathcal{M}(r_*,b_*)-\mathcal{M}(r_\infty,b_\infty))(\mathcal{E}{''}(r_*,b_*)(\tilde{r},\tilde{b}))\\
&\quad+(\mathcal{M}'(r_*,b_*) (\tilde{r},\tilde{b}))(\mathcal{E}'(r_*,b_*)-\mathcal{E}'(r_\infty,b_\infty)).
\end{aligned}
\end{align*}
Using the linearised entropy variables $(\tilde{u},\tilde{v})=\mathcal{E}{''}(r_*,b_*)(\tilde{r},\tilde{b})$ we obtain\(\)
\begin{align}\label{eq:linstab} 
\begin{aligned}
\mathcal{A} \partial_t (\tilde{u},\tilde{v})-\mathcal{B}(\tilde{u},\tilde{v})&=-\epsilon^{2d}\mathcal{G}'(r_*,b_*)\mathcal{A}(\tilde{u},\tilde{v})+(\mathcal{M}(r_*,b_*)-\mathcal{B})(\tilde{u},\tilde{v})\\
&\quad+(\mathcal{M}'(r_*,b_*) \mathcal{A}(\tilde{u},\tilde{v}))(\mathcal{E}'(r_*,b_*)-\mathcal{E}'(r_\infty,b_\infty)),
\end{aligned}
\end{align}
where $\mathcal{A}= \mathcal{E}{''}^{-1}(r_*,b_*)$ is a positive and $\mathcal{B}=\mathcal{M}(r_\infty,b_\infty)$ are negative semidefinite operator. Note that with the
usual settings for elliptic systems, $\mathcal{B}$ is elliptic and hence invertible on the space of function pairs in $H^1(\Omega)$ with zero means.

As already mentioned in Section \ref{sec:asymptoticgradientflowstructure}, $(r_*,b_*)=(r_\infty,b_\infty) + \mathcal O(\epsilon^{2d})$ and \eqref{eq:linstab} can be written as
\begin{align*} 
\begin{aligned}
\mathcal{A} \partial_t (\tilde{u},\tilde{v})-(\mathcal{B}+\epsilon^{2d}\mathcal{C})(\tilde{u},\tilde{v})&=0,
\end{aligned}
\end{align*}
for some bounded operator $\mathcal{C}$ on $H^1(\Omega)^2$ .
As $\mathcal{B}$ is symmetric and negative definite except on the two-dimensional space of constant functions also annihilated by $\mathcal{C}$, the nonzero eigenvalues of $\mathcal{B}+\epsilon^{2d}\mathcal{C}$ stay negative for $\epsilon$ sufficiently small, yielding linear stability for $(r_*,b_*)$, cf. \cite{kato2013perturbation}.

\section{Numerical investigations of steady states}  \label{sec:numerics}

In this section we compute the stationary solutions of \eqref{pde_general}. For the symmetric system \eqref{case1}, the solutions can be computed exactly as the  minimizers of the entropy $E$ in \eqref{entropy_case1}. If the mobility matrix \eqref{mobility_case1} is positive definite (which it is under the assumptions),  the equilibrium states can be computed by finding constants $\chi_r \in \mathbb{R}$ and $\chi_b \in \mathbb{R}$ such that
\begin{align*}
\partial_r E = \chi_r \text{ and } \partial_b E = \chi_b
\end{align*}
subject to normalization constraints. In the case of system \eqref{case1} we have
\begin{subequations}\label{stationary}
\begin{align}
\log r_{\infty} + V_r + \alpha(\epsilon_r^d r_{\infty} + \epsilon_{br}^d b_{\infty}) &= \chi_b\\
\log b_{\infty} + V_b + \alpha(\epsilon_b^d b_{\infty} + \epsilon_{br}^d r_{\infty}) &= \chi_r\\
\int_{\Omega} r_{\infty}( {\bf x}) \, d {\bf x} &= N_r\\
\int_{\Omega} b_{\infty}( {\bf x}) \, d {\bf x} &= N_b.
\end{align}
\end{subequations}
System \eqref{stationary} defines a nonlinear operator equation $F(r_{\infty}, b_{\infty}, \chi_r, \chi_b) = 0$, which can be solved via Newton's method. Note that the no-flux boundary conditions are automatically satisfied by assuming that $\partial_r E$ and $\partial_b E$ are constant.

For the general case \eqref{pde_general} we only obtain an asymptotic gradient flow structure with the entropy  $E_\epsilon$; if we use \eqref{stationary} to solve for the stationary solutions we will be committing an order $\epsilon^{2d}$ error. Instead, we compute the exact stationary states $(r_*, b_*)$ of the general system by solving the time-dependent problem \eqref{pde_general} for long-times, until the system has equilibrated. To solve \eqref{pde_general}, we use a second-order accurate finite-difference scheme in space and the method of lines with the inbuilt Matlab ode solver \texttt{ode15s} in time. 

We set $d=2$ and consider one-dimensional external potentials $\tilde V_r = \tilde V_r(x)$ and $\tilde V_b = \tilde V_b(x)$ so that the stationary states will be also one-dimensional. In particular, we take linear potentials $\tilde V_r = v_r x$ and $\tilde V_b = v_b x$ and solve for the full system \eqref{pde_general} and for the minimizers \eqref{stationary} in $[-1/2,1/2]$, which is split into 200 intervals. The Newton solver is initialized with the stationary state solution in the case of point particles and terminated if $\lVert F(r,b,\chi_r, \chi_b) \rVert_{L^2(0,1)} \leq 10^{-8}$.

\paragraph{Example 1} First we consider the case: $\epsilon_r = \epsilon_b$ and $D_r = D_b$, that is particles of the same size and diffusivity. In this case, system \eqref{case1} has a full gradient flow structure and hence we expect that the stationary states computed with the two approaches to be the same.
We plot the two pairs, $(r_*, b_*)$ computed as the long-time limit of \eqref{case1}, and $(r_\infty, b_\infty)$, computed from \eqref{stationary} in \figref{fig:stat_exact}. The parameters are $D_r = D_b = 1$, $\epsilon_r = \epsilon_b = 0.01$, $N_b = N_r = 200$ and $v_r = 2$, $v_b = 1$. As expected, the solutions are identical.
\def \scc {0.7}
\def \scl {1.1}
\begin{figure}[ht]
\unitlength=1cm
\begin{center}
\psfragscanon
\psfrag{x}[][][\scl]{$x$} \psfrag{data1}[][][\scl]{$r_\infty$}  \psfrag{data2}[][][\scl]{$b_\infty$}  \psfrag{data3}[][][\scl]{$r_*$} \psfrag{data4}[][][\scl]{$b_*$}
\includegraphics[width = .7\linewidth]{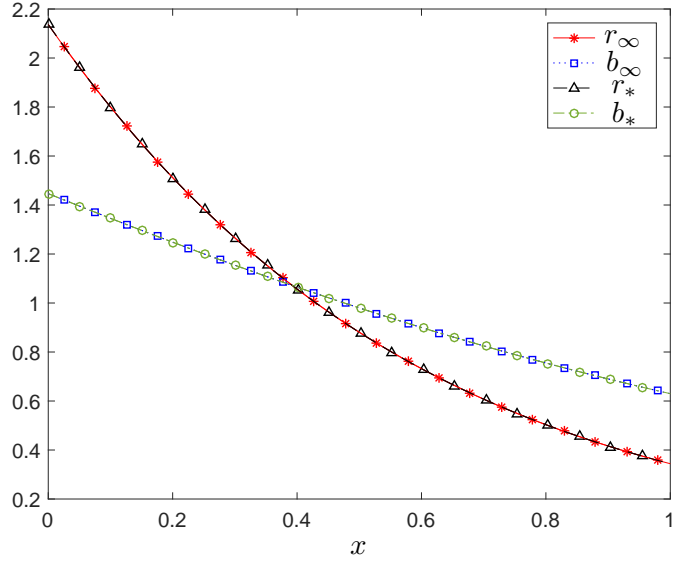}
\caption{Stationary solutions $(r_*, b_*)$ and $(r_\infty, b_\infty)$ from solving the long-time limit of \eqref{pde_general} and \eqref{stationary}, respectively, in the case with $\theta_r = \theta_b = 0$. The parameter values are $d=2$, $D_r = D_b = 1$, $\epsilon_r = \epsilon_b = 0.01$, $N_b = N_r = 200$ and $\tilde V_r = 2x$, $\tilde V_b = x$.}
\label{fig:stat_exact}
\end{center}
\end{figure}

\paragraph{Example 2}
From Corollary \ref{corollary_closeness} we expect the stationary solutions corresponding to the case of an asymptotic and a full gradient flow equation agree up to order $\mathcal{O}(\epsilon^d)$.
To investigate this, we again compare the solutions $(r_*, b_*)$ and $(r_\infty, b_\infty)$ as we move away from the case with an exact gradient-flow structure (which corresponds to $\theta_r = \theta_b = 0$, see \eqref{gradflow_generalasy} and \eqref{thetar_thetab}). 

In particular, we do a one-parameter sweep with $\theta_r$, increasing it from 0 (as in \figref{fig:stat_exact}) to $9 \cdot 10^{-5}$, while keeping $\epsilon_r = \epsilon_b =0.01$ and  $D_b = 1$ fixed. This ensures that when $\theta_r = 0$ then $\theta_b = 0$. The reds diffusivity $D_r$ is varied according to \eqref{thetar_thetab}. We plot the result for $\theta_r = 8\cdot 10^{-5}$ in \figref{fig:stat_error}. As expected, the error between the stationary solutions is apparent. 
\def \scc {0.8}
\def \scl {1.1}
\begin{figure}[ht]
\unitlength=1cm
\begin{center}
\psfragscanon
\psfrag{x}[][][\scl]{$x$} \psfrag{xx}[][][\scc]{$x$} \psfrag{data1}[][][\scl]{$r_\infty$}  \psfrag{data2}[][][\scl]{$b_\infty$}  \psfrag{data3}[][][\scl]{$r_*$} \psfrag{data4}[][][\scl]{$b_*$}
\includegraphics[width = .7\linewidth]{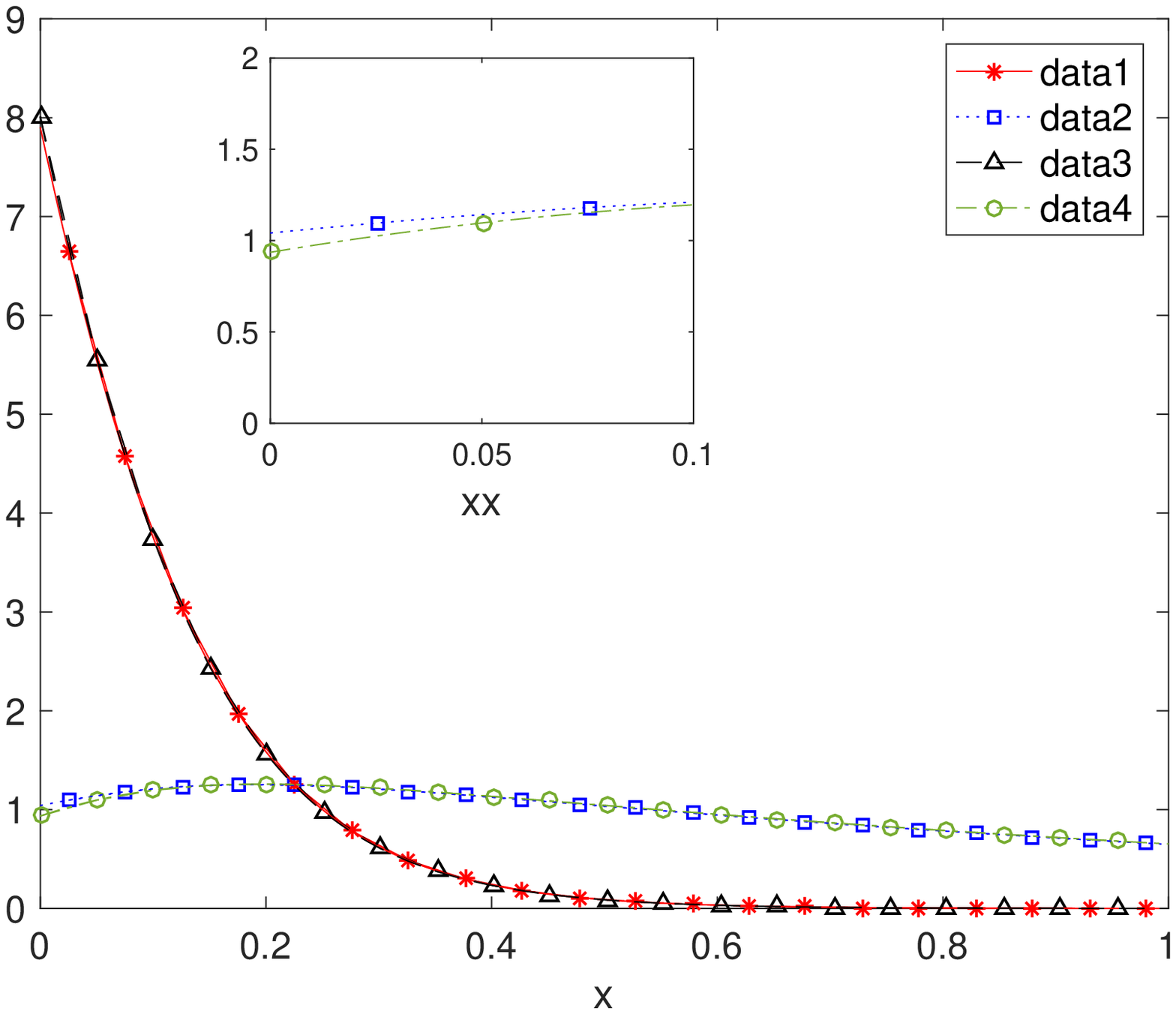}
\caption{Stationary solutions $(r_*, b_*)$ and $(r_\infty, b_\infty)$ from solving the long-time limit of \eqref{pde_general} and \eqref{stationary}, respectively, in a case with $\theta_r = 8 \cdot 10^{-5}$. The parameter values are $d=2$, $D_r = 0.2$, $D_b = 1$, $\epsilon_r = \epsilon_b = 0.01$, $N_b = N_r = 200$ and $\tilde V_r = 2x$, $\tilde V_b = x$.}
\label{fig:stat_error}
\end{center}
\end{figure}

The absolute error and the relative error between the solutions, $\| r_\infty-r_* \|$ and $\| b_\infty - b_*\|$ and $\| r_\infty - r_* \|/ \| r_\infty\|$ and $\| b_\infty - b_*\|/\|b_\infty\|$, respectively, as a function of $\theta_r$ is shown in \figref{fig:errors}. 
\def \scc {0.8}
\def \scl {1.0}
\begin{figure}[ht]
\unitlength=1cm
\begin{center}
\psfragscanon
\psfrag{thetar}[][][\scl]{$\theta_r$} \psfrag{data1}[][][\scc]{$b$}  \psfrag{data2}[][][\scc]{$r$}  \psfrag{Norm}[][][\scl]{Abs. error} \psfrag{norm}[][][\scl]{Rel. error} \psfrag{a}[][][\scl]{(a)} \psfrag{b}[][][\scl]{(b)}
\includegraphics[width = .48\linewidth]{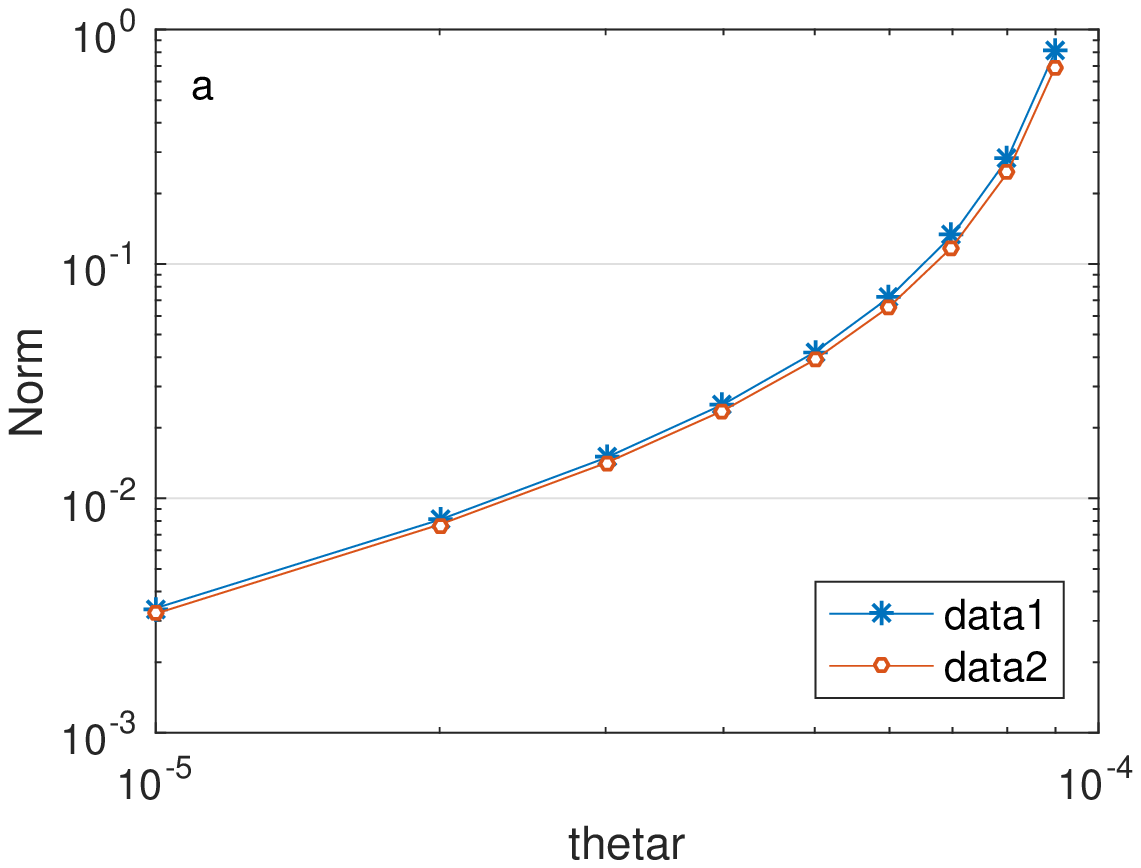} \quad\includegraphics[width = .48\linewidth]{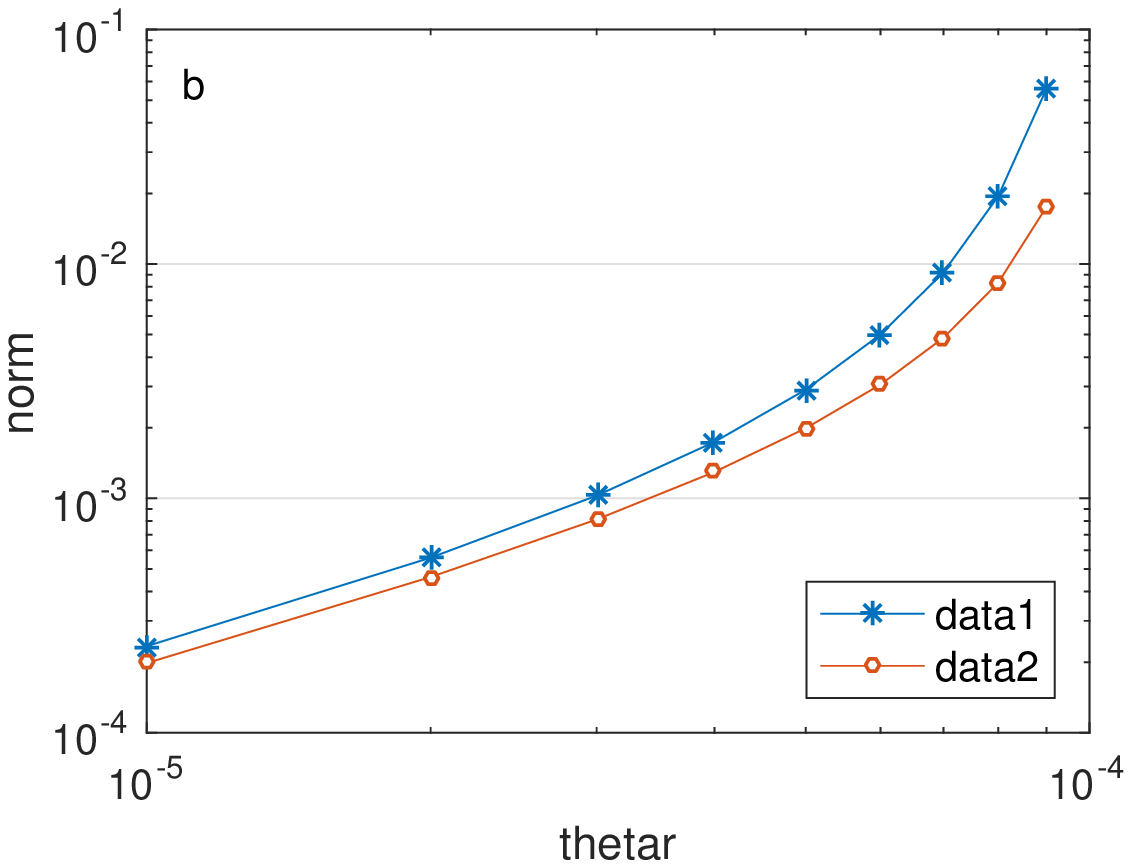}
\caption{Error between the stationary solution $(r_*, b_*)$ of \eqref{pde_general} and $(r_\infty, b_\infty)$ of \eqref{stationary} as a function of $\theta_r$. (a) Absolute error. (b) Relative error. The red particles diffusion $D_r$ is varied according to \eqref{thetar_thetab}, while the other parameter values are fixed to: $d=2$, $D_b = 1$, $\epsilon_r = \epsilon_b = 0.01$, $N_b = N_r = 200$ and $\tilde V_r = 2x$, $\tilde V_b = x$.}
\label{fig:errors}
\end{center}
\end{figure}

To conclude this section, we compute the stationary solutions of the (exact) full system and that approximated by the asymptotic gradient flow system as we vary $\epsilon$, where $\epsilon =\epsilon_b = \epsilon_r$, while keeping all the other parameters fixed. We plot the results in \figref{fig:errors_ep}. As expected from Corollary \ref{corollary_closeness}, the errors scale with $\epsilon^{2d} = \epsilon^4$.

\def \scc {0.8}
\def \scl {1.0}
\begin{figure}[ht]
\unitlength=1cm
\begin{center}
\psfragscanon
\psfrag{ep}[][][\scl]{$\epsilon$} \psfrag{data1}[][][\scc]{$b$}  \psfrag{data2}[][][\scc]{$r$} 
\psfrag{data3}[][][\scc]{$\epsilon^4$} \psfrag{Norm}[][][\scl]{Abs. error} \psfrag{norm}[][][\scl]{Rel. error} \psfrag{a}[][][\scl]{(a)} \psfrag{b}[][][\scl]{(b)}
\includegraphics[width = .48\linewidth]{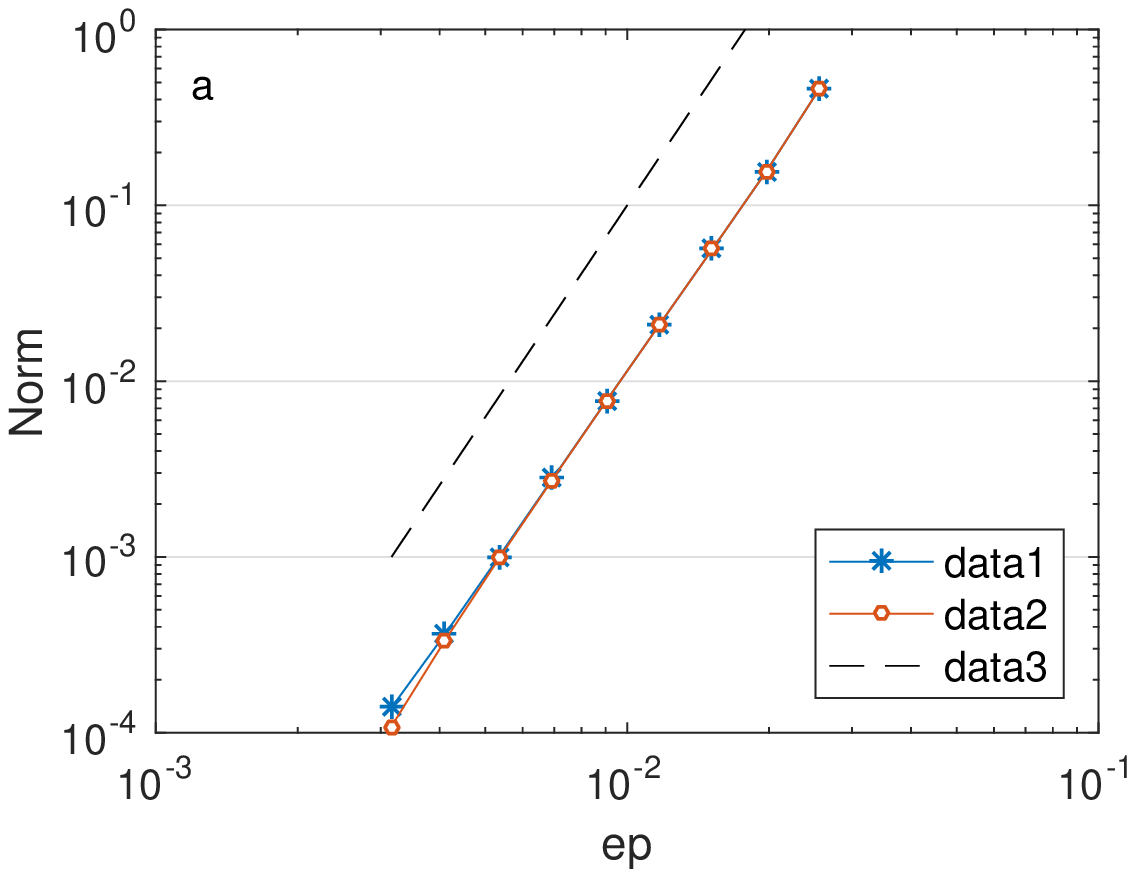} \quad\includegraphics[width = .48\linewidth]{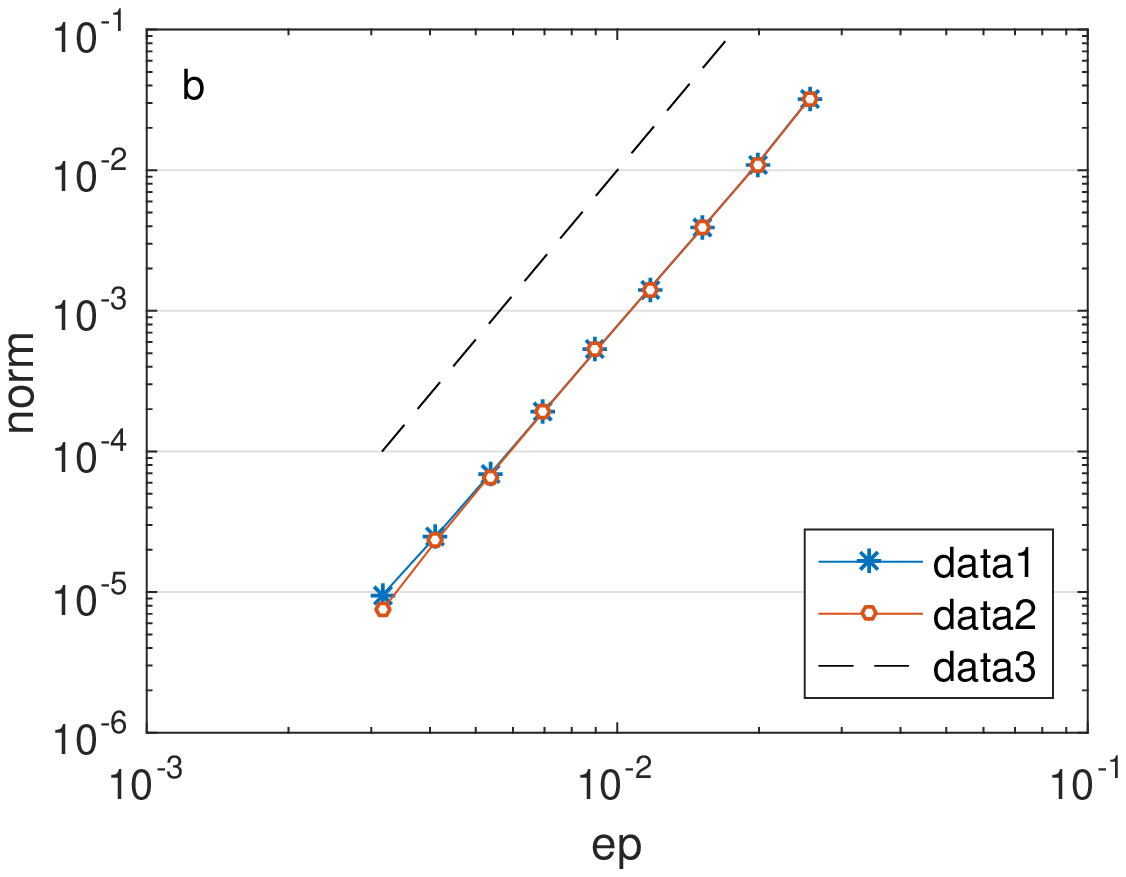}
\caption{Error between the stationary solution $(r_*, b_*)$ of \eqref{pde_general} and $(r_\infty, b_\infty)$ of \eqref{stationary} as a function of $\epsilon$, where $\epsilon = \epsilon_r = \epsilon_b $. (a) Absolute error. (b) Relative error. The parameter values are fixed to: $d=2$, $D_r = 2$, $D_b = 1$, $N_b = N_r = 200$ and $\tilde V_r = 2x$, $\tilde V_b = x$.}
\label{fig:errors_ep}
\end{center}
\end{figure}

\section{Global existence for the full gradient flow system} \label{sec:existence}

In this section we present a global in time existence result for the system with particles of same size and diffusivity \eqref{case1_rho}. 

\begin{theorem}[Global existence in the case of small volume fraction]
Let $T>0$, let $( r_0, b_0):\Omega \to \mathcal{S}^\circ$, where $\mathcal{S}$ is defined by \eqref{equ:set}, be a measurable function such that $E( r_0, b_0)<\infty$. Then there exists a weak solution $( r, b):\Omega\times (0,T)\to \mathcal{S}$ to system 
\begin{align}\label{theorem1_1}
\begin{aligned}
\partial_t
\begin{pmatrix}
 r\\ b
\end{pmatrix} &=\nabla \cdot\begin{pmatrix}
J_r\\J_b
\end{pmatrix} \quad \text{ with }\\
(1-\overline{\gamma}\rho)J_r&=
(1-\overline{\gamma}\rho)\left((1-\bar \gamma \rho)\nabla  r+(\bar \alpha+\bar \gamma) r \nabla \rho + r\nabla V_r+\bar \gamma \nabla(V_b-V_r)rb\right)\\
(1-\overline{\gamma}\rho)J_b&=(1-\overline{\gamma}\rho)\left((1-\bar \gamma \rho)\nabla  b+(\bar \alpha+\bar \gamma) b \nabla \rho + b\nabla V_b+\bar \gamma \nabla(V_b-V_r)rb\right),
\end{aligned}
\end{align}
satisfying 
\begin{align*}
&\partial_t  r,\, \partial_t  b \in L^2(0,T;H^1(\Omega)'),\\
& \rho\, \in L^2(0,T;H^1(\Omega )),\\
&(1-\bar \gamma  \rho)^2\nabla\sqrt{ r},\,(1-\bar \gamma  \rho)^2\nabla\sqrt{ b} \,\in L^2(0,T;L^2(\Omega)).
\end{align*}
Moreover, the solution satisfies the following entropy dissipation inequality:
\begin{align}\label{theorem1_2}
\begin{aligned}
\frac{\mathrm{d}E}{\mathrm{d}t} +\mathcal{D}_1\leq C,
\end{aligned}
\end{align}
where
\begin{equation*}
\mathcal{D}_1=\int_{\Omega} 2(1-\bar \gamma \rho)^4|\nabla\sqrt{ r}|^2 +2(1-\bar \gamma  \rho)^4|\nabla\sqrt{ b}|^2+\frac{\bar \gamma}{2}|\nabla \rho|^2\,d{\bf x}
\end{equation*}
and $C\geq0$ is a constant.
\end{theorem}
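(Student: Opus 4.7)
The plan is to combine an implicit Euler discretisation in time with a regularisation of the entropy that acts as a barrier against the degenerate boundary $\partial\mathcal{S} = \{r=0\}\cup\{b=0\}\cup\{\rho = 1/\bar\gamma\}$, solve each semi-discrete step by Schauder's fixed point theorem working in the entropy variables from \eqref{entropy_vars_case1}, and then pass to the limit in the time step $\tau$ and the regularisation parameter $\sigma$ using the entropy dissipation inequality. This is the natural approach in view of the gradient flow structure of \eqref{case1_entropy}, and mirrors the deep quench strategy for Cahn--Hilliard mentioned in the introduction.

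For the approximate problem, fix $\tau,\sigma > 0$ and replace the entropy in \eqref{entropy_case1} by
\begin{equation*}
E_\sigma(r,b) = E(r,b) + \sigma\int_\Omega H(r) + H(b) + H\bigl(1/\bar\gamma - \rho\bigr)\, d\mathbf{x},
\end{equation*}
where $H$ is a smooth strictly convex barrier with $H'(s)\to -\infty$ as $s\to 0^+$. The regularised entropy variables $(u,v) = \nabla_{(r,b)} E_\sigma$ then define a bijection between $H^1(\Omega)^2$ and the interior of $\mathcal{S}$, and the associated mobility is uniformly positive definite on bounded sets in $(u,v)$. Given $(r^{k-1},b^{k-1})$, I look for $(u^k,v^k)\in H^1(\Omega)^2$ such that
\begin{equation*}
\tfrac{1}{\tau}\langle r^k-r^{k-1},\varphi\rangle + \tfrac{1}{\tau}\langle b^k-b^{k-1},\psi\rangle + \int_\Omega \nabla(\varphi,\psi)^{\top}M(r^k,b^k)\nabla(u^k,v^k)\, d\mathbf{x} = 0
\end{equation*}
for all test pairs $(\varphi,\psi)$, where $(r^k,b^k)$ is recovered from $(u^k,v^k)$ by the Legendre transform of $E_\sigma$. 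Existence at each step follows from Schauder's theorem: freeze $(r,b)$ on the right-hand side, solve the resulting coercive linear elliptic system for $(u,v)$, recover $(r,b)$, and verify continuity and a priori boundedness in $H^1(\Omega)^2$ via the discrete entropy inequality obtained by testing with $(u^k,v^k)$.

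Testing the semi-discrete system with $(u^k,v^k)$ and exploiting the convexity of $E_\sigma$ yields
\begin{equation*}
E_\sigma(r^k,b^k) - E_\sigma(r^{k-1},b^{k-1}) + \tau\int_\Omega \nabla(u^k,v^k)^{\top}M(r^k,b^k)\nabla(u^k,v^k)\, d\mathbf{x} \leq 0.
\end{equation*}
Expanding the mobility-weighted quadratic form via $\nabla u = \nabla r/r + \bar\alpha\nabla\rho + \nabla V_r$ and the analogous identity for $\nabla v$, using $\det M = rb(1-\bar\gamma\rho)$ from \eqref{det_case1}, and applying Young's inequality to absorb the drift contributions generated by $V_r,V_b\in L^\infty$, one obtains a dissipation bound which, after passing to the square-root variables, reproduces precisely $\mathcal{D}_1$ up to a harmless constant; the appearance of the weight $(1-\bar\gamma\rho)^4$ reflects the degeneracy of $M$ near $\partial\mathcal{S}$ together with the pre-multiplication of the fluxes by $(1-\bar\gamma\rho)$ in the formulation \eqref{theorem1_1}. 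This produces uniform-in-$(\tau,\sigma)$ control of $\rho$ in $L^2(0,T;H^1)$, of the weighted gradients $(1-\bar\gamma\rho)^2\nabla\sqrt{r}, (1-\bar\gamma\rho)^2\nabla\sqrt{b}$ in $L^2(0,T;L^2)$, and, by duality in the weak form, of $\partial_t r, \partial_t b$ in $L^2(0,T;H^1(\Omega)')$.

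To pass to the limit, I introduce piecewise constant and piecewise linear interpolants $r_{\tau,\sigma}, b_{\tau,\sigma}$ in time and invoke Aubin--Lions to extract a subsequence converging strongly in $L^2(\Omega\times(0,T))$. The nonlinear pointwise terms, including the mobility entries and the cross term $rb\nabla(V_b-V_r)$, then pass to the limit by dominated convergence, while the weighted-gradient and $\nabla\rho$ bounds survive by weak lower semicontinuity, which also delivers \eqref{theorem1_2}. Sending first $\tau\to 0$ at fixed $\sigma$ gives a weak solution of the $\sigma$-regularised problem, and then $\sigma\to 0$ with $E_\sigma(r_0,b_0)\to E(r_0,b_0)$ (by $(r_0,b_0)\in\mathcal{S}^\circ$ and $E(r_0,b_0)<\infty$) produces the solution of \eqref{theorem1_1}. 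The principal obstacle is the degeneracy at $\rho = 1/\bar\gamma$: although the barrier keeps $\rho < 1/\bar\gamma$ almost everywhere for fixed $\sigma$, the limit set $\{\rho = 1/\bar\gamma\}$ may carry positive measure, so $J_r$ and $J_b$ cannot be identified individually as weak limits. This is precisely why the theorem is phrased in terms of $(1-\bar\gamma\rho)J_r$ and $(1-\bar\gamma\rho)J_b$, which do converge thanks to the weighted dissipation bound.
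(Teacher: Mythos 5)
Your overall strategy (implicit Euler, entropy regularisation, Schauder, pass to the limit via entropy dissipation) matches the paper's, but there are two concrete gaps.

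First, your linearised semi-discrete problem is
\begin{equation*}
\tfrac{1}{\tau}\langle r^k-r^{k-1},\varphi\rangle + \tfrac{1}{\tau}\langle b^k-b^{k-1},\psi\rangle + \int_\Omega \nabla(\varphi,\psi)^{\top}M(\tilde r,\tilde b)\nabla(u^k,v^k)\, d\mathbf{x} = 0,
\end{equation*}
and you claim the associated bilinear form is coercive, citing the barrier regularisation of the entropy. This does not follow. The form $\int \nabla\Phi^{\top}M\nabla U$ is at best positive semi-definite: it annihilates constant $(u,v)$, and when the frozen $(\tilde r,\tilde b)$ approaches $\partial\mathcal{S}$ the smallest eigenvalue of $M$ is uncontrolled. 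Your remark that ``bounded sets in $(u,v)$'' give a uniformly positive definite mobility requires a pointwise (hence $L^\infty$) bound on $(u,v)$, which does not follow from $H^1$ bounds for $d=2,3$; and the Schauder freezing is done over $(\tilde r,\tilde b)\in\mathcal{S}$, which includes degenerate boundary values. The paper resolves this by adding a higher-order regularisation $\tau(\Delta\tilde u_k-\tilde u_k,\Delta\tilde v_k-\tilde v_k)$ to the semi-discrete scheme (with regularisation parameter tied to $\tau$ itself, avoiding a double limit); the corresponding quadratic form $\tau R(\cdot,\cdot)$ makes the bilinear form coercive on $H^1(\Omega)^2$ with constant $\tau$, independently of the degeneracy of $M$. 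Without such a term, Lax--Milgram does not apply at the frozen step, and the discrete existence argument breaks down.

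Second, the passage to the limit in the nonlinear terms is not as painless as you state. The a priori bounds give a genuine $L^2(0,T;H^1)$ bound only for $\rho_\tau$; for $r_\tau$ and $b_\tau$ individually one only has $L^\infty$ weak-$*$ compactness (the weighted bound on $\sqrt{1-\bar\gamma\rho_\tau}\,\nabla\sqrt{r_\tau}$ degenerates). Hence Aubin--Lions gives strong convergence of $\rho_\tau$ but not of $r_\tau$, $b_\tau$, and ``dominated convergence'' is not available for products such as $r_\tau b_\tau$ or $(1-\bar\gamma\rho_\tau)^2\nabla r_\tau$. The paper closes this by a generalised Aubin--Lions lemma (Lemma~7 in \cite{zamponi2015analysis}), which gives strong $L^2$ convergence of $(1-\bar\gamma\rho_\tau)f(r_\tau,b_\tau)$ for continuous $f$; this is the reason the fluxes in \eqref{theorem1_1} are premultiplied by $(1-\bar\gamma\rho)$ and why Lemma~\ref{lemma5} is organised around such weighted quantities. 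You correctly diagnose that the degeneracy forces the theorem to be stated for $(1-\bar\gamma\rho)J_r$, but the limit identification step that makes that statement attainable is the one you gloss over.
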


We recall that system \eqref{case1_rho} can be written as a gradient flow:

\begin{align} \label{case1_1}
\begin{aligned}
\partial_t
\begin{pmatrix}
 r \\  b
\end{pmatrix}&=\nabla\cdot\left( M( r, b)\nabla \begin{pmatrix}
 u\\  v
\end{pmatrix}\right),
\end{aligned}
\end{align} 
where 
\[M=\begin{pmatrix}
 r(1-\bar \gamma  b)& \bar \gamma  r b\\
\bar \gamma  r b &  b (1-\bar \gamma  r)\\
\end{pmatrix}.\]
Note that if $ r, b$ and $\rho \in \mathcal{S}^\circ$, then the matrix $M$ is positive definite.

We perform a time discretisation of system \eqref{case1_1} using the implicit Euler scheme. The resulting recursive sequence of elliptic problems is then regularized.
Let $N\in\mathbb{N}$ and let $\tau=T/N$ be the time step size. We split the time interval into the subintervals
\[(0,T]=\bigcup_{k=1}^N ((k-1)\tau,k\tau],\qquad \tau=\frac{T}{N}.\] Then for given functions $( r_{k-1},  b_{k-1}) \in \mathcal{S}$, which approximate $( r, b)$ at time $\tau(k-1)$, we want to find $( r_k, b_k) \in \mathcal{S}$ solving the regularized time discrete problem
\begin{align}
\label{case1_1_reg}
\begin{aligned}
\frac{1}{\tau}\begin{pmatrix}
 r_k- r_{k-1} \\  b_k- b_{k-1}
\end{pmatrix}&=\nabla\cdot\left( M( r_k, b_k)\begin{pmatrix}
\nabla \tilde{u}_k\\ \nabla \tilde{v}_k\end{pmatrix}\right)+\tau\begin{pmatrix}
 \Delta \tilde{u}_k-\tilde{u}_k\\ \Delta \tilde{v}_k-\tilde{v}_k
\end{pmatrix},
\end{aligned}
\end{align} 
where we use the modified entropy
\begin{align}
\tilde{E} =E+E_\tau=\int_{\Omega}  &  r(\log   r -1)+    b (\log   b-1)   +  r  V_r +  b V_b + \frac{\bar \alpha}{2} \left(    r^2 + 2   r  b  +    b^2 \right) \\
& + \tau (1-\bar \gamma  \rho) (\log (1-\bar \gamma  \rho)-1)\, d  {\bf x},\nonumber
\end{align}
with associated entropy variables
\begin{align}
\begin{aligned}
\tilde{u}=u+u_\tau &=  \log  r + \bar \alpha  \rho + V_r- \tau \bar \gamma \log (1-\bar \gamma  \rho) ,\\
\tilde{v}=v+v_\tau &=  \log  b + \bar \alpha  \rho + V_b- \tau \bar \gamma \log (1-\bar \gamma  \rho).
\end{aligned}
\end{align}

The additional term in the entropy provides upper bounds on the solutions and the higher order regularization terms guarantee coercivity of the elliptic system in $H^1(\Omega)$, which is needed to show existence of weak solutions to a linearized version of the problem \eqref{case1_1_reg} using Lax-Milgram. 
The existence result of the corresponding nonlinear problem is concluded by applying Schauder fixed point theorem.

Finally uniform a priori estimates in $\tau$ and the use of a generalized version of the Aubin-Lions lemma allow to pass to the limit $\tau \to 0$ leading to the existence of \eqref{theorem1_1}. 
Note that the compactness results are sufficient for $1-\bar \gamma\rho >0$ to pass to the correct limit in the flux terms $J_r$ and $J_b$, i.e leading to the global existence of weak solutions to system \eqref{case1_rho}.

\begin{lemma}\label{lemma1}
The entropy density 
\begin{align*}
\tilde{h}:\mathcal{S}^\circ\to \mathbb{R}, \begin{pmatrix}
r\\b
\end{pmatrix}& \mapsto r(\log   r-1) +    b (\log   b-1)   +  r  V_r +  b V_b \\
&+ \frac{\bar \alpha}{2} \left(    r^2 + 2   r  b  +    b^2 \right) + \tau (1-\bar \gamma  \rho) (\log (1-\bar \gamma  \rho)-1)
\end{align*}
is strictly convex and belongs to $C^2(\mathcal{S}^\circ).$ Its gradient $\tilde{h}':\mathcal{S}^\circ\to \mathbb{R}^2$ is invertible and the inverse of the Hessian $\tilde{h}'':\mathcal{S}^\circ\to \mathbb{R}^{2\times 2}$ is uniformly bounded.
\end{lemma}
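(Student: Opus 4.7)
The plan is to verify each claim by direct computation and then to exploit the explicit $2\times 2$ structure of the Hessian of $\tilde h$, working throughout on $\mathcal{S}^\circ=\{(r,b):\ r,b>0,\ \rho<1/\bar\gamma\}$, with $\rho=r+b$.

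\emph{Smoothness, Hessian and strict convexity.} All logarithms in $\tilde h$ are evaluated at strictly positive arguments on $\mathcal{S}^\circ$, hence $\tilde h\in C^\infty(\mathcal{S}^\circ)$. A direct differentiation (using $\partial_r[s(\log s-1)]=-\bar\gamma\log s$ with $s=1-\bar\gamma\rho$) gives $\partial_r\tilde h=\log r+V_r+\bar\alpha\rho-\tau\bar\gamma\log(1-\bar\gamma\rho)=\tilde u$, symmetrically $\partial_b\tilde h=\tilde v$, in agreement with the entropy variables defined above, and
\begin{equation*}
\tilde h''(r,b)=\begin{pmatrix} 1/r+A & A \\ A & 1/b+A \end{pmatrix}, \qquad A:=\bar\alpha+\frac{\tau\bar\gamma^2}{1-\bar\gamma\rho}>0.
\end{equation*}
Its diagonal entries are positive and $\det\tilde h''=\tfrac{1}{rb}+\tfrac{A}{r}+\tfrac{A}{b}=\tfrac{1+A\rho}{rb}>0$, so $\tilde h''$ is positive definite pointwise and $\tilde h$ is strictly convex.

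\emph{Uniform bound on $(\tilde h'')^{-1}$.} Cramer's rule yields
\begin{equation*}
(\tilde h'')^{-1}=\frac{1}{1+A\rho}\begin{pmatrix} r+Arb & -Arb \\ -Arb & b+Arb \end{pmatrix}.
\end{equation*}
Splitting the diagonal entry as $\tfrac{r+Arb}{1+A\rho}\le \tfrac{r}{1+A\rho}+\tfrac{Arb}{1+A(r+b)}\le r+\tfrac{rb}{r+b}\le 2/\bar\gamma$, with the same bound for the off-diagonal entry, yields a bound depending only on $\bar\gamma$, independent of $\tau$ and of the point in $\mathcal{S}^\circ$. The crucial observation (and main technical point of the lemma) is that although $\tilde h''$ itself blows up as $\rho\to 1/\bar\gamma$, so that no pointwise eigenvalue bound is available, the cofactor combination $Arb/(1+A\rho)$ tends to the finite limit $rb/\rho$ as $A\to\infty$, producing the needed cancellation.

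\emph{Invertibility of $\tilde h'$.} Injectivity of $\tilde h':\mathcal{S}^\circ\to\mathbb{R}^2$ follows from strict monotonicity on the convex domain, and openness of the image follows from the inverse function theorem applied to the nonsingular map $\tilde h'$. To close the argument I show that the image is also closed. Suppose $\tilde h'(r_n,b_n)\to(u,v)$ in $\mathbb{R}^2$ and, by compactness of $\overline{\mathcal S}$, extract a subsequence with $(r_n,b_n)\to(r^*,b^*)\in\overline{\mathcal S}$. If $r^*=0$ then $\log r_n\to-\infty$ while $V_r,\bar\alpha\rho_n,-\tau\bar\gamma\log(1-\bar\gamma\rho_n)$ all stay bounded below on $\mathcal{S}^\circ$ (the last term is $\ge 0$ there), so $\partial_r\tilde h(r_n,b_n)\to-\infty$; by symmetry $b^*=0$ gives $\partial_b\tilde h\to-\infty$. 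If $\rho^*=1/\bar\gamma$ then $-\tau\bar\gamma\log(1-\bar\gamma\rho_n)\to+\infty$ and at least one of $r^*,b^*$ is strictly positive (say $r^*>0$), so $\log r_n$ stays bounded above and $\partial_r\tilde h(r_n,b_n)\to+\infty$. Each case contradicts convergence, hence $(r^*,b^*)\in\mathcal{S}^\circ$ and the limit $(u,v)=\tilde h'(r^*,b^*)$ lies in the image. Closedness combined with openness and connectedness of $\mathbb{R}^2$ forces $\tilde h'(\mathcal{S}^\circ)=\mathbb{R}^2$, proving invertibility.
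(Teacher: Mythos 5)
Your computations of $\tilde h''$, its determinant, and the explicit Cramer's-rule bound on $(\tilde h'')^{-1}$ are correct and in fact sharper than the paper's treatment, which simply asserts without calculation that the inverse Hessian is bounded; your observation that the cofactor ratio $Arb/(1+A\rho)\to rb/\rho$ as $A\to\infty$ is exactly the cancellation that makes the bound uniform near $\rho=1/\bar\gamma$. Where you genuinely diverge from the paper is the surjectivity of $\tilde h'$. The paper factors $\tilde h' = f\circ g$ with $g(r,b)=\bigl(\log r - \tau\bar\gamma\log(1-\bar\gamma\rho),\ \log b - \tau\bar\gamma\log(1-\bar\gamma\rho)\bigr)$, verifies directly via a scalar fixed-point argument that $g$ is a bijection onto $\mathbb{R}^2$, writes $f=\mathrm{id}+\chi\circ g^{-1}$ with $\chi$ bounded, and invokes Hadamard's global inverse function theorem. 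You instead argue topologically: the image of $\tilde h'$ is open (inverse function theorem, $\tilde h''$ nonsingular everywhere) and closed (boundary analysis), hence equal to the connected space $\mathbb{R}^2$; injectivity follows from strict monotonicity of the gradient of a strictly convex function on a convex domain. Your route avoids introducing the auxiliary change of variables $g$ and the appeal to Hadamard's theorem, at the price of a slightly delicate case analysis on $\partial\mathcal{S}$; the paper's route isolates all singular behavior into the explicit bijection $g$, leaving only a bounded perturbation of the identity to control.

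There is, however, a small logical slip in your closedness argument that should be repaired. You claim that $r^*=0$ forces $\partial_r\tilde h(r_n,b_n)\to-\infty$ because $\log r_n\to -\infty$ while the remaining terms are bounded \emph{below}. But ``$-\infty$ plus bounded below'' is indeterminate: the term $-\tau\bar\gamma\log(1-\bar\gamma\rho_n)$ is only bounded below (by $0$), and if simultaneously $\rho_n\to 1/\bar\gamma$ it tends to $+\infty$ and can balance $\log r_n$ — for instance $r_n=(1-\bar\gamma\rho_n)^{\tau\bar\gamma}$ keeps $\partial_r\tilde h$ finite even though $r_n\to 0$. The overall argument is still salvageable because that problematic sub-case is covered by your third branch ($\rho^*=1/\bar\gamma$ with $b^*>0$, giving $\partial_b\tilde h\to+\infty$), but as written the three cases are not mutually exclusive and the first one, read on its own, contains a false implication. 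The clean fix is to treat $\rho^*=1/\bar\gamma$ first, and only under the standing assumption $\rho^*<1/\bar\gamma$ (which makes the regularization term bounded, not merely bounded below) consider $r^*=0$ or $b^*=0$; then each branch yields a genuine divergence.
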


\begin{proof}
Note that
\[\tilde{h}'=\begin{pmatrix}
\log  r-\tau\bar \gamma \log (1-\bar \gamma  \rho) +\bar \alpha \rho +V_r\\
\log  b-\tau\bar \gamma \log (1-\bar \gamma  \rho) +\bar \alpha \rho +V_b
\end{pmatrix}\]
and
\[\tilde{h}''=\begin{pmatrix}
\frac{1}{ r}+\tau\frac{\bar \gamma^2}{1-\bar \gamma  \rho}+\bar \alpha & \tau\frac{\bar \gamma^2}{1-\bar \gamma  \rho}+\bar \alpha \\
\tau \frac{\bar \gamma^2}{1-\bar \gamma  \rho}+\bar \alpha  &\frac{1}{ b} +\tau \frac{\bar \gamma^2}{1-\bar \gamma  \rho}+\bar \alpha 
\end{pmatrix}.\]
The matrix $\tilde{h}''$ is positive definite on the set $\mathcal{S}^\circ$, so $\tilde{h}$ is strictly convex. We can easily deduce that the inverse of $\tilde{h}''$ exists and is bounded on $\mathcal{S}^\circ$.

Next we verify the invertibility of $\tilde{h}'$.
Note that the function $g=(g_1,g_2):\mathcal{S}^\circ\to \mathbb{R}^2,( r, b)\mapsto (\log  r-\tau \bar \gamma \log (1-\bar \gamma  \rho),
\log  b-\tau \bar \gamma \log (1-\bar \gamma  \rho))$ is invertible. Let $(x,y)\in \mathbb{R}^2$ and define $u(z)=(e^x+e^y)(1-\bar \gamma z)$ for $0<z<\frac{1}{\bar \gamma}$. Then $u$ is nonincreasing and as $u(0)>0$ and $u\left(\frac{1}{\bar \gamma}\right)=0$, there exists a unique fixed point $0<z_0<\frac{1}{\bar \gamma}$ such that $u(z_0)=z_0$. Then we define $ r=e^x(1-\bar \gamma z_0)>0$ and $ b =e^y(1-\bar \gamma z_0)>0$. It holds that $ r+ b=(e^x+e^y)(1-\bar \gamma z_0)=z_0<\frac{1}{\bar \gamma}$. So, $( r, b)\in \mathcal{S}^\circ$.
Then, we define the function $f=\tilde{h}'\circ g^{-1}:\mathbb{R}^2\to \mathbb{R}^2$. Since $\tilde{h}''$ and $g'$ are nonsingular matrices for $( r , b)\in \mathcal{S}^\circ$, the Jacobian of $f$ is also nonsingular for $( r , b)\in \mathcal{S}^\circ$.
Furthermore, we have that 
\[f(y)=y+\chi (g^{-1}(y)),\quad y\in \mathbb{R}^2,\]
where $\chi=\begin{pmatrix}
\bar \alpha  \rho +V_r\\
\bar \alpha  \rho +V_b
\end{pmatrix}\in C^0(\mathcal{S})\subseteq L^{\infty}(\mathbb{\mathcal{S}^\circ})$.
So, $|f(y)|\to \infty$ as $|y|\to \infty$, which together with the invertibility of the matrix $Df$ allow us to apply Hadamard's global inverse theorem showing that $f$ is invertible. So, also $\tilde{h}'$ is invertible.
\end{proof}

\subsection{Time discretisation and regularization of system \eqref{case1_1}}

The weak formulation of system \eqref{case1_1_reg} is given by: 

\begin{align}
\label{case1_1_reg_weak}
\begin{aligned}
\frac{1}{\tau}\int_\Omega \begin{pmatrix}
 r_k- r_{k-1} \\  b_k- b_{k-1}
\end{pmatrix} \cdot \begin{pmatrix}
\Phi_1 \\ \Phi_2
\end{pmatrix}\, d{\bf x}&+\int_\Omega\begin{pmatrix}
\nabla \Phi_1 \\ \nabla \Phi_2 
\end{pmatrix}^T M( r_k, b_k)\begin{pmatrix}
\nabla \tilde{u}_k\\ \nabla \tilde{v}_k\end{pmatrix}\,d{\bf x}\\
&+\tau R\left(\begin{pmatrix}
\Phi_1\\ \Phi_2
\end{pmatrix},\begin{pmatrix}
\tilde{u}_k\\\tilde{v}_k
\end{pmatrix}\right)=0
\end{aligned}
\end{align} 

for $(\Phi_1,\Phi_2)\in H^1(\Omega)\times H^1(\Omega)$, where $( r_k, b_k)=h'^{-1}(\tilde{u}_k,\tilde{v}_k)$ and
\begin{align*}
\begin{aligned}
R\left(\begin{pmatrix}
\Phi_1\\ \Phi_2
\end{pmatrix},\begin{pmatrix}
\tilde{u}_k\\\tilde{v}_k
\end{pmatrix}\right)&=\int_{\Omega}
\Phi_1\tilde{u}_k+\Phi_2\tilde{v}_k+\nabla \Phi_1\cdot \nabla \tilde{u}_k+\nabla \Phi_2\cdot \nabla \tilde{v}_k
\,dx\,dy.
\end{aligned}
\end{align*}

We define $F:\mathcal{S}\subseteq L^2(\Omega,\mathbb{R}^2)\to\mathcal{S}\subseteq L^2(\Omega,\mathbb{R}^2), (\tilde{r} ,\tilde{b}) \mapsto ( r, b)=h'^{-1}(\tilde{u},\tilde{v})$, where
$(\tilde{u},\tilde{v})$ is the unique solution in $H^1(\Omega,\mathbb{R}^2)$ to the linear problem
\begin{equation} \label{equ_lax} 
a((\tilde{u},\tilde{v}),(\Phi_1,\Phi_2))=F(\Phi_1,\Phi_2) \quad \text{for all }(\Phi_1,\Phi_2)\in H^1(\Omega,\mathbb{R}^2)
\end{equation}
with 
\begin{align*}
a((\tilde{u},\tilde{v}),(\Phi_1,\Phi_2))&=\int_{\Omega}\begin{pmatrix}
\nabla \Phi_1\\ \nabla \Phi_2
\end{pmatrix}^T M(\tilde{r},\tilde{b})\begin{pmatrix}
\nabla u \\ \nabla v 
\end{pmatrix}\,d{\bf x}+\tau R\left(\begin{pmatrix}
\Phi_1\\ \Phi_2
\end{pmatrix},\begin{pmatrix}
\tilde{u}\\\tilde{v}
\end{pmatrix}\right)\\
F(\Phi_1,\Phi_2)&=-\frac{1}{\tau}\int_{\Omega}\begin{pmatrix}
\tilde{r}- r_{k-1}\\\tilde{b}- b_{k-1}
\end{pmatrix}\cdot\begin{pmatrix}
\Phi_1\\\Phi_2
\end{pmatrix}\,d{\bf x}
\end{align*}
The bilinear form $a:H^1(\Omega;\mathbb{R}^2)\times H^1(\Omega;\mathbb{R}^2)\to \mathbb{R}$ and the functional $F:H^1(\Omega,\mathbb{R}^2)\to \mathbb{R}$ are bounded. Moreover, $a$ is coercive since the positive semi-definiteness of $M(r,b)$ implies that
\begin{align*}
a((\tilde{u},\tilde{v}),(\tilde{u},\tilde{v}))&=\int_{\Omega}\begin{pmatrix}
\nabla \tilde{u}\\ \nabla \tilde{v}
\end{pmatrix}^T M(\tilde{r},\tilde{b})\begin{pmatrix}
\nabla \tilde{u} \\ \nabla \tilde{v} 
\end{pmatrix}\,d{\bf x}+\tau R\left(\begin{pmatrix}
\tilde{u}\\ \tilde{v}
\end{pmatrix},\begin{pmatrix}
\tilde{u}\\ \tilde{v}
\end{pmatrix}\right)\\
&\geq \tau \left(\|\tilde{u}\|_{H^1(\Omega)}^2+\|\tilde{v}\|_{H^1(\Omega)}^2\right).
\end{align*}
Then the Lax-Milgram lemma guarantees the existence of a unique solution $(\tilde{u},\tilde{v})\in H^1(\Omega;\mathbb{R}^2)$ to \eqref{equ_lax}.

To apply Schauer's fixed point theorem, we need to show that the map $S$:
\begin{compactenum}[(i)]
\item  maps a convex, closed set onto itself,\label{schauder1}
\item  is compact,\label{schauder2}
\item  is continuous.\label{schauder3}
\end{compactenum}
Since $\mathcal{S}$ is convex and closed, property \eqref{schauder1} is satisfied; \eqref{schauder2} follows from the compact embedding $H^1(\Omega,\mathbb{R}^2)\hookrightarrow L^2(\Omega,\mathbb{R}^2)$. 
Continuity \eqref{schauder3}: let $(\tilde{r}_k,\tilde{b}_k)$ be a sequence in $\mathcal{S}$ converging strongly to $(\tilde{r},\tilde{b})$ in $L^2(\Omega,\mathbb{R}^2)$ and let $(\tilde{u}_k,\tilde{v}_k)$ be the corresponding unique solution to \eqref{equ_lax} in $H^1(\Omega;\mathbb{R}^2)$. As the matrix $M$ only contains sums and products of $ r$ and $ b$, we have that $M(\tilde{r}_k,\tilde{b}_k)\to M(\tilde{r},\tilde{b})$ strongly in $L^2(\Omega,\mathbb{R}^2)$. The positive semidefiniteness of the matrix $M$ for $( r, b)\in \mathcal{S}$ provides a uniform bound for $(\tilde{u}_k,\tilde{v}_k)$ in $H^1(\Omega;\mathbb{R}^2)$. Hence, there exists a subsequence with $(\tilde{u}_k,\tilde{v}_k)\rightharpoonup (\tilde{u},\tilde{v})$ weakly in $H^1(\Omega;\mathbb{R}^2)$. The $L^{\infty}$ bounds of $M(\tilde{r}_k,\tilde{b}_k)$ and the application of a density argument allow us to pass from test functions $(\Phi_1,\Phi_2)\in W^{1,\infty}(\Omega,\mathbb{R}^2)$ to test functions $(\Phi_1,\Phi_2)\in H^1(\Omega,\mathbb{R}^2)$. So, the limit $(\tilde{u},\tilde{v})$ as the solution of problem \eqref{equ_lax} with coefficients $(\tilde{r},\tilde{b})$ is well defined. 
Due to the compact embedding $H^1(\Omega,\mathbb{R}^2)\hookrightarrow L^2(\Omega,\mathbb{R}^2)$, we have a strongly converging subsequence of $(\tilde{u}_k,\tilde{v}_k)$ in $L^2(\Omega,\mathbb{R}^2)$. Since the limit is unique, the whole sequence converges. From Lemma \ref{lemma1} we know that $( r, b)=h'^{-1}(\tilde{u},\tilde{v})$ is Lipschitz continuous, which yields continuity of $F$.

Hence, we can apply Schauder's fixed point theorem, which assures the existence of a solution $( r, b)\in \mathcal{S}$ to  \eqref{equ_lax} with $(\tilde{r},\tilde{b})$ replaced by $(r, b)$.

\subsection{Entropy dissipation} \label{sec:entropy_diss}

\begin{lemma}\label{lemma2}
Let $ r,  b :\Omega \rightarrow \mathcal{S}$ be a sufficiently smooth solution to system
\begin{align}
\label{case1_1_ent}
\begin{aligned}
\partial_t
\begin{pmatrix}
 r \\  b
\end{pmatrix}&=\nabla\cdot\left( M( r, b) \nabla \begin{pmatrix}
 \tilde{u}\\  \tilde{v}
\end{pmatrix}\right).
\end{aligned}
\end{align} 
Then, the entropy $\tilde{E}$ is decreasing and there exists a constant $C\geq 0$ such that
\begin{align}
\label{entropyinequality}
\begin{aligned}
\frac{\mathrm{d} \tilde{E}}{\mathrm{d}t}+\mathcal{D}_0&\leq C,
\end{aligned}
\end{align}
where 
\begin{equation*}
\mathcal{D}_0=\int_{\Omega} 2(1-\bar \gamma  \rho)|\nabla\sqrt{ r}|^2 +2(1-\bar \gamma  \rho)|\nabla\sqrt{ b}|^2+\frac{\bar \gamma}{2}|\nabla  \rho|^2+\frac{\tau^2}{2} \frac{\bar \gamma^5  \rho^2}{(1-\bar \gamma  \rho)^2}|\nabla  \rho |^2\,d{\bf x}.
\end{equation*}
\end{lemma}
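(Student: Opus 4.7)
The proof proceeds by computing $\frac{d\tilde E}{dt}$ directly along a sufficiently smooth solution and then bounding the resulting dissipation term from below by the integrand of $\mathcal{D}_0$ up to a bounded remainder. Because $\partial_r \tilde h = \tilde u$ and $\partial_b \tilde h = \tilde v$ by construction of the modified entropy variables, the chain rule and integration by parts (boundary terms vanish by the no-flux structure inherited from the weak formulation) yield
\begin{equation*}
 \frac{d\tilde E}{dt} = -\int_\Omega Q\, d\mathbf{x}, \qquad Q := \begin{pmatrix} \nabla \tilde u \\ \nabla \tilde v \end{pmatrix}^{\!T} M(r,b) \begin{pmatrix} \nabla \tilde u \\ \nabla \tilde v \end{pmatrix}.
\end{equation*}
The task therefore reduces to the pointwise inequality $Q \geq \text{(integrand of } \mathcal{D}_0 \text{)} - c(\mathbf{x})$ for a uniformly integrable $c$.

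To compute $Q$, I would decompose $\nabla\tilde u = \nabla r/r + \zeta\nabla\rho + \nabla V_r$ and $\nabla\tilde v = \nabla b/b + \zeta\nabla\rho + \nabla V_b$, where $\zeta := \bar\alpha + \tau\bar\gamma^2/(1-\bar\gamma\rho) \geq 0$, and exploit three algebraic identities valid on $\mathcal{S}^\circ$:
\begin{equation*}
 r(1-\bar\gamma b) = r(1-\bar\gamma\rho) + \bar\gamma r^2, \quad r(1-\bar\gamma b) + 2\bar\gamma rb + b(1-\bar\gamma r) = \rho, \quad r(1-\bar\gamma b) + \bar\gamma rb = r.
\end{equation*}
Combined with $2\bar\gamma rb \nabla\log r \cdot \nabla\log b = 2\bar\gamma \nabla r \cdot \nabla b$, the first identity collapses the Fisher-information block of $Q$ (the quadratic form evaluated on $(\nabla r/r, \nabla b/b)$) to $4(1-\bar\gamma\rho)(|\nabla\sqrt{r}|^2 + |\nabla\sqrt{b}|^2) + \bar\gamma|\nabla\rho|^2$. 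The second identity makes the pure $\nabla\rho$-block equal to $\rho\zeta^2|\nabla\rho|^2$, and the third makes the Fisher-$\nabla\rho$ cross block equal to $2\zeta|\nabla\rho|^2 \geq 0$, which I would simply discard.

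From $\rho\zeta^2|\nabla\rho|^2$, dropping the $\bar\alpha$ part of $\zeta$ and applying the pointwise inequality $\rho\bar\gamma^4 \geq \tfrac{1}{2}\bar\gamma^5\rho^2$ (valid on $\mathcal{S}$ because $\bar\gamma\rho \leq 1$) recovers the last term of $\mathcal{D}_0$ with a factor-of-two buffer to spare. The computation has also produced $4(1-\bar\gamma\rho)(|\nabla\sqrt{r}|^2 + |\nabla\sqrt{b}|^2)$ and $\bar\gamma|\nabla\rho|^2$, whereas $\mathcal{D}_0$ only requires half of each; the leftover $2(1-\bar\gamma\rho)(|\nabla\sqrt{r}|^2 + |\nabla\sqrt{b}|^2)$, $\bar\gamma|\nabla\rho|^2/2$ and $\tfrac{1}{2}\rho\zeta^2|\nabla\rho|^2$ serve as reservoirs into which the cross terms involving $\nabla V_r, \nabla V_b$ (of the form $\nabla r\cdot\nabla V_r$, $\nabla\rho\cdot\nabla V_r$ and their blue analogues, all with prefactors bounded by $r,b \leq 1/\bar\gamma$) are absorbed using Young's inequality. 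The remaining purely potential block, together with the Young remainders, is uniformly bounded since $r,b \leq 1/\bar\gamma$ and $\nabla V_r, \nabla V_b \in L^\infty$ by assumption \ref{a:V}, which produces the constant $C$ on the right-hand side.

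The main obstacle is careful bookkeeping: the Young constants must be tuned so that each reservoir exactly dominates its cross term while still leaving the required multiples of $(1-\bar\gamma\rho)|\nabla\sqrt{r}|^2$, $|\nabla\rho|^2$, and $\rho\zeta^2|\nabla\rho|^2$. A more subtle point is that the pointwise bound $\bar\gamma\rho \leq 1$, used both to compare $\rho\bar\gamma^4$ with $\tfrac{1}{2}\bar\gamma^5\rho^2$ and to keep $\zeta$ finite, is exactly what the regularising piece $\tau(1-\bar\gamma\rho)(\log(1-\bar\gamma\rho)-1)$ in $\tilde E$ enforces on solutions of the scheme; this explains why $\mathcal{D}_0$ features $\tau^2\bar\gamma^5\rho^2$ rather than the bare $\tau^2\bar\gamma^4\rho$ that falls out of the raw computation, and why the regularised entropy is the natural object for this dissipation identity.
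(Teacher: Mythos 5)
Your overall strategy mirrors the paper's: compute $\frac{d\tilde E}{dt} = -\int_\Omega Q\,d{\bf x}$ with $Q = (\nabla\tilde u,\nabla\tilde v)^T M(\nabla\tilde u,\nabla\tilde v)$, peel off the potential contributions, identify the surviving nonnegative blocks, and keep a constant fraction of them as $\mathcal{D}_0$ while the Young remainders produce the constant $C$. Your three algebraic identities are correct, as is the collapse of the Fisher block to $4(1-\bar\gamma\rho)(|\nabla\sqrt r|^2+|\nabla\sqrt b|^2)+\bar\gamma|\nabla\rho|^2$, the $\nabla\rho$-block to $\rho\zeta^2|\nabla\rho|^2$, and the Fisher--$\nabla\rho$ cross to $2\zeta|\nabla\rho|^2\ge 0$; the derivation of the $\tau^2$-term from $\rho\zeta^2$ via $\rho\bar\gamma^4\ge\bar\gamma^5\rho^2$ also goes through. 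Where you depart from the paper is the decomposition of $Q$: the paper first rewrites $Q=r(1-\bar\gamma\rho)|\nabla\tilde u|^2+b(1-\bar\gamma\rho)|\nabla\tilde v|^2+\bar\gamma|r\nabla\tilde u+b\nabla\tilde v|^2$, i.e. a sum of three weighted perfect squares, and applies Young \emph{inside each square}; you instead expand $(a+\xi_\rho+\xi_V)^T M(a+\xi_\rho+\xi_V)$ and treat the bilinear cross blocks separately.

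There is a genuine gap in the absorption step as you describe it. You claim the potential cross terms have ``prefactors bounded by $r,b\le 1/\bar\gamma$'' and can be fed into the reservoirs by Young. But boundedness of the prefactor is not enough: the reservoir $(1-\bar\gamma\rho)|\nabla\sqrt r|^2$ degenerates as $\rho\to 1/\bar\gamma$, and a cross term such as $(1-\bar\gamma b)\,\nabla r\cdot\nabla V_r$ (which is what $2a^T M\xi_V$ actually produces) has a prefactor that does \emph{not} vanish there. Youngifying it against $(1-\bar\gamma\rho)|\nabla\sqrt r|^2$ yields a remainder proportional to $r(1-\bar\gamma b)^2/(1-\bar\gamma\rho)\,|\nabla V_r|^2$, which blows up on approach to $\partial\mathcal S$ and cannot be dominated by a constant. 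The fix is an extra recombination that you have not stated: using $1-\bar\gamma b=(1-\bar\gamma\rho)+\bar\gamma r$ and its analogue, one finds
\begin{equation*}
2a^T M\xi_V \;=\; 2(1-\bar\gamma\rho)\bigl(\nabla r\cdot\nabla V_r+\nabla b\cdot\nabla V_b\bigr)\;+\;2\bar\gamma\,\nabla\rho\cdot\bigl(r\nabla V_r+b\nabla V_b\bigr),
\end{equation*}
so that the first piece carries the same $(1-\bar\gamma\rho)$ weight as the Fisher reservoir and the second matches the non-degenerate $\bar\gamma|\nabla\rho|^2$ reservoir; then both Young remainders are uniformly bounded (by $r(1-\bar\gamma\rho)|\nabla V_r|^2$, $b(1-\bar\gamma\rho)|\nabla V_b|^2$, and $\bar\gamma|r\nabla V_r+b\nabla V_b|^2$, exactly the error terms the paper exhibits). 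Equivalently, you can avoid the issue altogether by doing Young once at the level of the $M$-inner product, $Q\ge\tfrac12\Vert a+\xi_\rho\Vert_M^2-\Vert\xi_V\Vert_M^2$, which is in effect what the paper's three-weighted-squares decomposition accomplishes. Without one of these two observations the absorption as you sketch it does not close.
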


\begin{proof}
System \eqref{case1_1_ent} enables us to deduce the entropy dissipation relation:
\begin{align}\label{equ_1}
\begin{aligned}
\frac{\mathrm{d}\tilde{E}}{\mathrm{d}t} &=\int_{\Omega} (\tilde{u}\, \partial_t  r +\tilde{v}\, \partial_t  b ) \,d{\bf x} =-\int_{\Omega} \begin{pmatrix}
\nabla \tilde{u} \\ \nabla \tilde{v}
\end{pmatrix}^T M \begin{pmatrix}
\nabla \tilde{u} \\ \nabla \tilde{v}
\end{pmatrix}\,d{\bf x}\\
&=-\int_{\Omega}   r (1 - \bar \gamma   b) |\nabla \tilde{u}|^2+  b (1 - \bar \gamma   r) |\nabla \tilde{v}|^2 +2\bar \gamma  r  b \nabla \tilde{u}\nabla \tilde{v}\,d{\bf x}\\
&=-\int_{\Omega}   r (1 - \bar \gamma   \rho) |\nabla \tilde{u}|^2+  b (1 - \bar \gamma   \rho) |\nabla \tilde{v}|^2 +\bar \gamma | r \nabla \tilde{u}+ b \nabla\tilde{v}|^2\,d{\bf x}\leq 0.
\end{aligned}
\end{align}
Inequality \eqref{entropyinequality} follows from the definitions of $\tilde{u}$ and $\tilde{v}$ as well as Young's inequality to estimate the mixed terms. Furthermore we use that
\begin{align*}
\quad  r(1-\bar \gamma  \rho)& \left|\frac{\nabla  r}{ r}+\tau\frac{\bar \gamma^2}{1-\bar \gamma  \rho}\nabla  \rho +\bar \alpha \nabla  \rho \right|^2+ b (1-\bar \gamma  \rho) \left| \frac{\nabla  b}{ b}+\tau\frac{\bar \gamma^2}{1-\bar \gamma  \rho}\nabla  \rho +\bar \alpha \nabla \rho\right|^2\\
&=4(1-\bar \gamma  \rho)|\nabla \sqrt{ r}|^2+4(1-\bar \gamma  \rho)|\nabla \sqrt{ b}|^2+\bar \alpha^2 \rho (1-\bar \gamma  \rho)|\nabla  \rho|^2+2\bar \alpha(1-\bar \gamma\rho)|\nabla\rho|^2\\
&\quad +\tau^2 \frac{\bar \gamma^4  \rho}{1-\bar \gamma  \rho}|\nabla  \rho |^2+2\tau \bar \gamma^2|\nabla  \rho|^2+2\tau  \rho \bar \gamma^2 \bar \alpha |\nabla  \rho |^2 
\end{align*}
and
\begin{align*}
&\quad \bar \gamma | r \nabla \tilde{u}+ b \nabla\tilde{v}|^2 = \bar \gamma \left|\nabla  \rho \left(1+\frac{\tau \bar \gamma^2  \rho }{1-\bar \gamma\rho}+\bar \alpha \rho \right)+ r\nabla V_r+ b \nabla V_b \right|^2.
\end{align*}

This gives us
\begin{align*}
\frac{\mathrm{d}E}{\mathrm{d}t} &\leq -\int_\Omega  2(1-\bar \gamma  \rho)|\nabla \sqrt{ r}|^2+2(1-\bar \gamma  \rho)|\nabla \sqrt{ b}|^2+ \frac{\bar \gamma}{2}|\nabla\rho|^2+\frac{\tau^2}{2} \frac{\bar \gamma^5  \rho^2}{(1-\bar \gamma  \rho)^2}|\nabla  \rho |^2\, d{\bf x} \\
&\quad  +\int_\Omega (1-\bar \gamma  \rho)( r|\nabla V_r|^2+ b|\nabla V_b|^2)+\bar \gamma | r \nabla V_r + b \nabla V_b|^2\, d{\bf x}.
\end{align*}
Since $ r,  b$ and $\rho \in \mathcal{S}$ and $\nabla V_r, \nabla V_b\in L^1(\Omega)$, we deduce \eqref{entropyinequality}.
\end{proof}

\subsection{The limit $\tau \to 0$.} \label{sec:limit_tau}

As the entropy density $\tilde{h}$ is convex, we have $\tilde{h}(\varphi_1)-\tilde{h}(\varphi_2)\leq \tilde{h}'(\varphi_1)\cdot(\varphi_1-\varphi_2)$ for all $\varphi_1,\varphi_2\in\mathcal{S}$. Choosing $\varphi_1=( r_k, b_k)$ and $\varphi_2=( r_{k-1}, b_{k-1})$ and using $\tilde{h}'( r_k, b_k)=( \tilde{u}_k, \tilde{v}_k)$, we obtain
\begin{align}\label{inequ1}
\frac{1}{\tau}\int_{\Omega}&\begin{pmatrix}
 r_k- r_{k-1}\\ b_k- b_{k-1}
\end{pmatrix}\cdot\begin{pmatrix}
 \tilde{u}_k\\ \tilde{v}_k
\end{pmatrix}\,d{\bf x}\geq\frac{1}{\tau}\int_{\Omega}\begin{pmatrix}
\tilde{h}( r_k, b_k)-\tilde{h}( r_{k-1},  b_{k-1})
\end{pmatrix}\,d{\bf x}.
\end{align}
Applying \eqref{inequ1} in equation \eqref{case1_1_reg_weak} with the test function $(\Phi_1,\Phi_2)=(\tilde{u}_k,\tilde{v}_k)$ leads to
\begin{align}\label{inequ2}
\begin{aligned}
\int_{\Omega}\tilde{h}( r_k, b_k)\,d{\bf x}
+\tau\int_{\Omega}\begin{pmatrix}
\nabla \tilde{u}_k\\ \nabla \tilde{v}_k
\end{pmatrix}^T M( r_k, b_k)\begin{pmatrix}
\nabla \tilde{u}_k \\ \nabla \tilde{v}_k 
\end{pmatrix}\,d{\bf x} \\
+\tau^2 R \left(\begin{pmatrix}
\tilde{u}_k\\\tilde{v}_k
\end{pmatrix},\begin{pmatrix}
\tilde{u}_k\\\tilde{v}_k
\end{pmatrix}\right)&\leq\int_{\Omega}\tilde{h}( r_{k-1}, b_{k-1})\,d{\bf x}.
\end{aligned}
\end{align}
Applying the entropy inequality \eqref{entropyinequality} and resolving recursion \eqref{inequ2} yields
\begin{align}\label{discrete_entropyinequality}
\begin{aligned}
\quad\int_{\Omega} \tilde{h}( r_k, b_k)\,d{\bf x}+&\tau\sum_{j=1}^k\int_{\Omega} 2(1-\bar \gamma  \rho_j)|\nabla\sqrt{ r_j}|^2 +2(1-\bar \gamma  \rho_j)|\nabla\sqrt{ b_j}|^2+\frac{\bar \gamma}{2}|\nabla  \rho_j|^2\\
&+\frac{\tau^2}{2} \frac{\bar \gamma^5  \rho_j^2}{(1-\bar \gamma  \rho_j)^2}|\nabla  \rho_j |^2\,d{\bf x}+\tau^2\sum_{j=1}^k R \left(\begin{pmatrix}
\tilde{u}_j\\\tilde{v}_j
\end{pmatrix},\begin{pmatrix}
\tilde{u}_j\\\tilde{v}_j
\end{pmatrix}\right) \\
&\leq \int_{\Omega} \tilde{h}( r_0, b_0)\,dx\,dy+T C.
\end{aligned}
\end{align}

Let $( r_k, b_k)$ be a sequence of solutions to \eqref{case1_1_reg_weak}. We define $ r_\tau({\bf x},t)= r_k({\bf x})$ and $b_\tau({\bf x},t)=b_k({\bf x})$ for ${\bf x}\in\Omega$ and $t\in ((k-1)\tau,k\tau]$. Then $( r_\tau, b_\tau)$ solves the following problem, where $\sigma_\tau$ denotes a shift operator, i.e. $(\sigma_\tau  r_\tau)({\bf x},t)= r_\tau ({\bf x},t-\tau)$ and $(\sigma_\tau  b_\tau)({\bf x},t)= b_\tau ({\bf x},t-\tau)$ for $\tau \leq t\leq T$,

\begin{align}\label{equ1_tau}
&\int_0^T\int_{\Omega}\frac{1}{\tau}\begin{pmatrix}
 r_\tau-\sigma_\tau  r_\tau\\ b_\tau-\sigma_\tau  b_\tau
\end{pmatrix}\cdot\begin{pmatrix}
\Phi_1\\\Phi_2
\end{pmatrix}+
\begin{pmatrix}
(1-\bar \gamma \rho_\tau)\nabla  r_\tau+(\bar \alpha+\bar \gamma) r_\tau \nabla \rho_\tau \\
(1-\bar \gamma \rho_\tau)\nabla  b_\tau+(\bar \alpha+\bar \gamma) b_\tau \nabla \rho_\tau \\
\end{pmatrix}\cdot\begin{pmatrix}
\nabla \Phi_1\\ \nabla \Phi_2
\end{pmatrix}\,d{\bf x}\,dt\nonumber \\
&\qquad \qquad +\int_0^T \int_{\Omega}\begin{pmatrix}
 r_\tau\nabla V_r+\bar \gamma \nabla(V_b-V_r) r_\tau  b_\tau\\
 b_\tau\nabla V_b+\bar \gamma \nabla(V_r-V_b) r_\tau  b_\tau\\
\end{pmatrix}\cdot\begin{pmatrix}
\nabla \Phi_1\\ \nabla \Phi_2
\end{pmatrix}d{\bf x}\,dt\\
&\qquad \qquad +\int_0^T \int_{\Omega}\begin{pmatrix}
\frac{\tau\bar \gamma^2r_\tau}{1-\bar \gamma\rho_\tau}\nabla \rho_\tau \\
\frac{\tau\bar \gamma^2 b_\tau}{1-\bar \gamma\rho_\tau}\nabla \rho_\tau
\end{pmatrix}\cdot \begin{pmatrix}
\nabla \Phi_1\\ \nabla \Phi_2
\end{pmatrix}d{\bf x}+\tau R\left(\begin{pmatrix}
\Phi_1\\\Phi_2
\end{pmatrix},\begin{pmatrix}
\tilde{u}_\tau\\\tilde{v}_\tau
\end{pmatrix}\right)\,dt=0,\nonumber
\end{align}
for $(\Phi_1(t),\Phi_2(t))\in L^2(0,T;H^1(\Omega))$. Note that the terms in the third line are the regularization terms.

Inequality \eqref{discrete_entropyinequality} becomes
\begin{align}\label{entropyinequality2}
\begin{aligned}
\quad\int_{\Omega} \tilde{h}( r_\tau(T), b_\tau(T))\,d{\bf x}&+\int_0^T\int_{\Omega} 2(1-\bar \gamma  \rho_\tau)|\nabla\sqrt{ r_\tau}|^2 +2(1-\bar \gamma  \rho_\tau)|\nabla\sqrt{ b_\tau}|^2+\frac{\bar \gamma}{2}|\nabla  \rho_\tau|^2\\
&+\frac{\tau^2}{2} \frac{\bar \gamma^5  \rho_\tau^2}{(1-\bar \gamma  \rho_\tau)^2}|\nabla  \rho_\tau |^2\,d{\bf x}\,dt+\tau\int_0^T R \left(\begin{pmatrix}
\tilde{u}_\tau\\\tilde{v}_\tau
\end{pmatrix},\begin{pmatrix}
\tilde{u}_\tau\\\tilde{v}_\tau
\end{pmatrix}\right)\,dt\\
&\leq  \int_{\Omega}\tilde{h}(r_0,b_0)\,dx\,dy+T C,
\end{aligned}
\end{align}
which provides us the following a priori estimates. Note that from now on $K$ denotes a generic constant.  
\begin{lemma}{(A priori estimates)}\label{lemma3}
There exists a constant $K\in\mathbb{R}^+$, such that the following bounds hold:
\begin{align}
\|\sqrt{1-\bar \gamma  \rho_\tau}\nabla\sqrt{ r_\tau}\|_{L^2(\Omega_T)}+\|\sqrt{1-\bar \gamma  \rho_\tau}\nabla\sqrt{ b_\tau}\|_{L^2(\Omega_T)}&\leq K, \label{apriori1}\\
\| \rho_\tau\|_{L^2(0,T;H^1(\Omega))}&\leq K,  \label{apriori2}\\
\tau \left(\left\|\frac{ r_\tau}{1-\bar \gamma  \rho_\tau}\nabla  \rho_\tau\right\|_{L^2(\Omega_T)}+ \left\|\frac{ b_\tau}{1-\bar \gamma  \rho_\tau}\nabla  \rho_\tau\right\|_{L^2(\Omega_T)}\right)&\leq K,  \label{apriori3}\\
\sqrt{\tau}(\|\tilde{u}_\tau\|_{L^2(0,T;H^1(\Omega))}+\|\tilde{v}_\tau\|_{L^2(0,T;H^1(\Omega))})&\leq K,\label{apriori4}
\end{align}
where $\Omega_T=\Omega \times (0,T)$.
\end{lemma}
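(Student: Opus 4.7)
The plan is to read off all four a priori estimates directly from the discrete entropy dissipation inequality \eqref{entropyinequality2}, which already contains each of the relevant quantities on its left-hand side as a non-negative contribution.

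First I would establish a uniform bound on the right-hand side of \eqref{entropyinequality2}. The initial entropy $\int_\Omega \tilde h(r_0,b_0)\,d{\bf x}$ is finite by the assumption $E(r_0,b_0)<\infty$ together with the observation that the regularizing contribution $\tau(1-\bar\gamma\rho_0)(\log(1-\bar\gamma\rho_0)-1)$ is uniformly bounded on $\mathcal{S}$, since the map $s\mapsto s(\log s-1)$ is continuous on $[0,1]$. The term $TC$ is independent of $\tau$ by Lemma~\ref{lemma2}.

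Next I would produce a uniform lower bound for the left-hand-side entropy term $\int_\Omega \tilde h(r_\tau(T),b_\tau(T))\,d{\bf x}$, using that $s(\log s-1)\ge -1$ on $[0,\infty)$, that $\tfrac{\bar\alpha}{2}(r^2+2rb+b^2)\ge 0$, that the potential contributions $\int rV_r+bV_b$ are uniformly bounded thanks to assumption \ref{a:V} and the conservation $\int r_\tau=N_r$, $\int b_\tau=N_b$, and that the regularization term is bounded on $\mathcal{S}$. Absorbing this lower bound into the right-hand side leaves a constant $K$ bounding the sum of the four non-negative dissipation contributions in \eqref{entropyinequality2}, so each term is individually bounded by $K$.

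The four estimates then follow almost directly. Estimate \eqref{apriori1} is exactly the bound on the first two dissipation terms. For \eqref{apriori2}, the $L^2(0,T;L^2(\Omega))$ bound on $\rho_\tau$ follows from the pointwise constraint $\rho_\tau\le 1/\bar\gamma$ on $\mathcal{S}$, while the gradient part comes from the $\tfrac{\bar\gamma}{2}|\nabla\rho_\tau|^2$ term. Estimate \eqref{apriori3} follows by noting $r_\tau,b_\tau\le\rho_\tau$, hence
\[
\frac{r_\tau^2}{(1-\bar\gamma\rho_\tau)^2}|\nabla\rho_\tau|^2\le \frac{\rho_\tau^2}{(1-\bar\gamma\rho_\tau)^2}|\nabla\rho_\tau|^2,
\]
and similarly for $b_\tau$; integrating and applying the bound on the fourth dissipation term (which carries the $\tau^2$ factor) gives the factor $\tau$ after taking square roots. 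Finally, \eqref{apriori4} comes from the regularization term $\tau\int_0^T R((\tilde u_\tau,\tilde v_\tau),(\tilde u_\tau,\tilde v_\tau))\,dt=\tau(\|\tilde u_\tau\|_{L^2(0,T;H^1)}^2+\|\tilde v_\tau\|_{L^2(0,T;H^1)}^2)$, again after extracting a square root and using $\sqrt{a+b}\le\sqrt{a}+\sqrt{b}$.

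I do not expect a genuine obstacle here: the work has essentially been done in Lemma~\ref{lemma2}, and the proof is bookkeeping. The only mildly delicate point is the lower bound on $\tilde h$, where one has to verify that the logarithmic, polynomial, potential, and regularization pieces are each controlled uniformly in $\tau$ on $\mathcal{S}$; this is where the boundedness of $V_r,V_b$ and of $\mathcal{S}$ are used.
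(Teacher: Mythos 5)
Your proposal is correct and follows the same route the paper (implicitly) takes: the paper simply states that inequality \eqref{entropyinequality2} "provides" the a priori estimates, and your argument spells out precisely that bookkeeping, including the uniform lower bound on $\tilde h$ and the identification of each estimate with one of the nonnegative dissipation contributions. The only points the paper leaves unstated and you correctly fill in are the use of $r_\tau,b_\tau\le\rho_\tau$ for \eqref{apriori3} and the uniform control of the $\tau$-dependent regularization term in $\tilde h$ via boundedness of $s\mapsto s(\log s-1)$ on $[0,1]$.
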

\begin{lemma} \label{lemma4}
The discrete time derivatives of $ r_\tau$ and $ b_\tau$ are uniformly bounded, i.e.
\begin{align}\label{inequ3}
\frac{1}{\tau}\| r_\tau-\sigma_\tau  r_\tau\|_{L^2(0,T;H^1(\Omega)')}+\frac{1}{\tau}\| b_\tau-\sigma_\tau  b_\tau\|_{L^2(0,T;H^1(\Omega)')}&\leq K. 
\end{align}
\end{lemma}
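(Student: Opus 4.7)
The plan is to read off the bound directly from the weak formulation \eqref{equ1_tau}. Rearranging that identity, we have, for every test pair $(\Phi_1,\Phi_2)\in L^2(0,T;H^1(\Omega))^2$,
\begin{align*}
\frac{1}{\tau}\int_0^T\!\!\int_\Omega (r_\tau-\sigma_\tau r_\tau)\Phi_1\,d{\bf x}\,dt
 &= -\int_0^T\!\!\int_\Omega \bigl[(1-\bar\gamma\rho_\tau)\nabla r_\tau+(\bar\alpha+\bar\gamma)r_\tau\nabla\rho_\tau\bigr]\cdot\nabla\Phi_1\,d{\bf x}\,dt\\
 &\quad -\int_0^T\!\!\int_\Omega \bigl[r_\tau\nabla V_r+\bar\gamma\nabla(V_b-V_r)\,r_\tau b_\tau\bigr]\cdot\nabla\Phi_1\,d{\bf x}\,dt\\
 &\quad -\int_0^T\!\!\int_\Omega \frac{\tau\bar\gamma^2 r_\tau}{1-\bar\gamma\rho_\tau}\nabla\rho_\tau\cdot\nabla\Phi_1\,d{\bf x}\,dt
 -\tau\!\int_0^T\! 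R\bigl((\Phi_1,0),(\tilde u_\tau,\tilde v_\tau)\bigr)dt,
\end{align*}
and analogously for the $b$-component with $\Phi_2$. Hence it is enough to estimate each of the four groups of right-hand side terms by $K\,\|\Phi_1\|_{L^2(0,T;H^1(\Omega))}$ using the a priori estimates in Lemma \ref{lemma3} and the pointwise bound $(r_\tau,b_\tau)\in\mathcal S$ (so in particular $r_\tau,b_\tau,\rho_\tau\le 1/\bar\gamma$).

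For the first bracket, I would use the factorization
\[
(1-\bar\gamma\rho_\tau)\nabla r_\tau=2\sqrt{r_\tau(1-\bar\gamma\rho_\tau)}\,\bigl(\sqrt{1-\bar\gamma\rho_\tau}\,\nabla\sqrt{r_\tau}\bigr),
\]
where the first factor is bounded in $L^\infty$ by the membership $r_\tau\in\mathcal S$, while the second is bounded in $L^2(\Omega_T)$ by \eqref{apriori1}. The term $r_\tau\nabla\rho_\tau$ is controlled directly by the $L^\infty$ bound on $r_\tau$ and the $L^2$ bound \eqref{apriori2} on $\nabla\rho_\tau$. The potential terms are handled by assumption \ref{a:V}, which gives $\nabla V_r,\nabla V_b\in L^2(\Omega)$, combined again with $r_\tau,b_\tau\in L^\infty$. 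The third (regularization-induced) flux is exactly the quantity controlled uniformly by \eqref{apriori3}. Finally, for the last term I apply Cauchy–Schwarz to get
\[
\Bigl|\tau\!\int_0^T R\bigl((\Phi_1,0),(\tilde u_\tau,\tilde v_\tau)\bigr)\,dt\Bigr|
\le \tau\,\|\tilde u_\tau\|_{L^2(0,T;H^1(\Omega))}\,\|\Phi_1\|_{L^2(0,T;H^1(\Omega))}
\le K\sqrt{\tau}\,\|\Phi_1\|_{L^2(0,T;H^1(\Omega))},
\]
where the last inequality uses \eqref{apriori4}. This residual factor of $\sqrt{\tau}$ is harmless and in fact vanishes in the limit $\tau\to 0$.

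Putting these estimates together and taking the supremum over $\Phi_1\in L^2(0,T;H^1(\Omega))$ with unit norm yields
\[
\frac{1}{\tau}\|r_\tau-\sigma_\tau r_\tau\|_{L^2(0,T;H^1(\Omega)')}\le K,
\]
and the identical argument applied to the $b$-component produces the analogous bound. I expect no serious obstacle here, since all the hard work has been done in Lemma \ref{lemma3}; the only mild subtlety is making sure that the degenerate-looking factor $1-\bar\gamma\rho_\tau$ in front of $\nabla r_\tau$ is distributed correctly so that the remaining pieces can be matched with the estimates \eqref{apriori1}--\eqref{apriori4} rather than with quantities one does not yet control.
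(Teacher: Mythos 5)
Your proposal is correct and takes essentially the same route as the paper: test the weak formulation \eqref{equ1_tau} against an arbitrary $\Phi_1\in L^2(0,T;H^1(\Omega))$, group the flux terms, and bound each using Lemma \ref{lemma3} together with the uniform $L^\infty$ bound from $(r_\tau,b_\tau)\in\mathcal S$. Your explicit factorization $(1-\bar\gamma\rho_\tau)\nabla r_\tau=2\sqrt{r_\tau(1-\bar\gamma\rho_\tau)}\cdot\sqrt{1-\bar\gamma\rho_\tau}\,\nabla\sqrt{r_\tau}$ is the step the paper leaves implicit when it invokes \eqref{apriori1}, and your treatment of the potential term (putting $\nabla V$ in $L^2$ and the densities in $L^\infty$) is a minor improvement over the paper's wording, which formally places $\nabla V$ in $L^\infty$, a bit more than assumption \ref{a:V} actually grants.
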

\begin{proof}
Let $\Phi \in L^2(0,T;H^1(\Omega))$. Using the a priori estimates from Lemma \ref{lemma3} gives  
\begin{align*}
\frac{1}{\tau}\int_0^T \langle  r_\tau-\sigma_\tau  r_\tau,\Phi\rangle\,dt &= -\int_0^T\int_{\Omega}((1-\bar \gamma \rho_\tau)\nabla  r_\tau+(\bar \alpha+\bar \gamma) r_\tau \nabla \rho_\tau) \nabla \Phi\,d{\bf x}\,dt\\
&-\int_0^T \int_{\Omega}( r_\tau\nabla V_r+\bar \gamma \nabla(V_b-V_r) r_\tau  b_\tau)\nabla \Phi\,d{\bf x}\,dt\\
&-\tau \bar\gamma^2\int_0^T \int_{\Omega} \frac{r_\tau}{1-\bar \gamma\rho_\tau}\nabla \rho_\tau \nabla \Phi\,d{\bf x}\,dt\\
&-\tau \int_0^T\int_{\Omega} \tilde{u}_\tau\Phi+ \nabla \tilde{u}_\tau\cdot\nabla \Phi\,d{\bf x}\,dt\\
\leq \,&\|(1-\bar \gamma \rho_\tau)\nabla  r_\tau\|_{L^2(\Omega_T)}\|\nabla \Phi\|_{L^2(\Omega_T)}\\
&+(\bar \alpha +\bar \gamma )\| r_\tau \|_{L^{\infty}(\Omega_T)}\|\nabla  \rho_\tau \|_{L^2(\Omega_T)}\|\nabla \Phi\|_{L^2(\Omega_T)}\\
&+\| r_\tau \nabla V_r+\bar \gamma \nabla(V_b-V_r) r_\tau b_\tau\|_{L^{\infty}(\Omega_T)}\|\nabla \Phi\|_{L^1(\Omega_T)}\\
&+\tau \bar \gamma^2\left\|\frac{r_\tau}{1-\bar \gamma  \rho_\tau} \nabla\rho_\tau\right\|_{L^2(\Omega_T)}\|\nabla \Phi\|_{L^2(\Omega_T)}\\
&+\tau \|\tilde{u}_\tau\|_{L^2(0,T;H^1(\Omega))}\|\Phi\|_{L^2(0,T;H^1(\Omega))}\\
\leq & \, K\|\Phi\|_{L^2(0,T;H^1(\Omega))}.
\end{align*}
A similar estimate can be deduced for $b$ which concludes the proof.\qquad
\end{proof}

Even though the a priori estimates from Lemma \ref{lemma3} are enough to get boundedness for all terms in \eqref{equ1_tau} in $L^2(\Omega_T)$, the compactness results are not enough to identify the correct limits for $\tau \to 0$. 
From Lemma \ref{lemma3} we get that, as $\tau \to 0$
\begin{equation*}
\tau \tilde{u}_\tau, \tau \tilde{v}_\tau \to 0 \quad \text{ strongly in } L^2(0,T;H^1(\Omega)).
\end{equation*}
Together with Lemma \ref{lemma4}, we get a solution to
\begin{align}
\int_0^T \int_{\Omega} \begin{pmatrix}
\partial_t r\\ \partial_t b
\end{pmatrix}\cdot\begin{pmatrix}
 \Phi_1\\ \Phi_2
\end{pmatrix} \,d{\bf x}\,dt = \int_0^T \int_{\Omega} \begin{pmatrix}
J_r\\ J_b
\end{pmatrix}\cdot \begin{pmatrix}
\nabla \Phi_1\\\nabla \Phi_2
\end{pmatrix}\,d{\bf x}\,dt,
\end{align} 
where 
\begin{align}
(1-\bar \gamma \rho_\tau)\nabla  r_\tau+(\bar \alpha+\bar \gamma) r_\tau \nabla \rho_\tau + r_\tau\nabla V_r+\bar \gamma \nabla(V_b-V_r) r_\tau  b_\tau+\frac{\tau\bar \gamma^2r_\tau}{1-\bar \gamma\rho_\tau}\nabla \rho_\tau \rightharpoonup J_r, \label{limit1}\\
(1-\bar \gamma \rho_\tau)\nabla  b_\tau+(\bar \alpha+\bar \gamma) b_\tau \nabla \rho_\tau+ b_\tau\nabla V_b+\bar \gamma \nabla(V_r-V_b) r_\tau  b_\tau+\frac{\tau\bar \gamma^2 b_\tau}{1-\bar \gamma\rho_\tau}\nabla \rho_\tau\rightharpoonup J_b,\label{limit2}
\end{align}
weakly in  $L^2(\Omega_T)$.

In order to identify the limit terms, we multiply equation \eqref{limit1} by $(1-\overline{\gamma}\rho)$.

\begin{lemma}\label{lemma5}
For $\tau\to 0$, we have
\bigskip
\begin{compactenum}[(i)]
\item$(1-\bar \gamma \rho_\tau)^2\nabla  r_\tau \rightharpoonup(1-\bar \gamma \rho)^2\nabla  r $ weakly in  $L^2(\Omega_T)$\label{1},\\
\item$(1-\bar \gamma \rho_\tau)(\bar \alpha+\bar \gamma) r_\tau \nabla \rho_\tau\rightharpoonup(1-\bar \gamma \rho)(\bar \alpha+\bar \gamma) r \nabla \rho$ weakly in  $L^2(\Omega_T)$\label{2},\\ 
\item$(1-\bar \gamma \rho_\tau)r_\tau\nabla V_r \to (1-\bar \gamma \rho)r\nabla V_r$ strongly in  $L^2(\Omega_T)$\label{3},\\
\item$(1-\bar \gamma \rho_\tau)\bar \gamma \nabla(V_b-V_r) r_\tau  b_\tau \to (1-\bar \gamma \rho)\bar \gamma \nabla(V_b-V_r) r b$ strongly in  $L^2(\Omega_T)$\label{4},\\
\item $(1-\bar \gamma \rho_\tau)\frac{\tau\bar \gamma^2r_\tau}{1-\bar \gamma\rho_\tau}\nabla \rho_\tau =\tau\bar \gamma^2r_\tau\nabla \rho_\tau\to 0$ strongly in  $L^2(\Omega_T)$\label{5}.
\end{compactenum}
\end{lemma}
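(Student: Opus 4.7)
The plan is to first upgrade the weak, degenerate a~priori estimates from Lemmas~\ref{lemma3} and \ref{lemma4} to strong (and a.e.) compactness for the sequences $\rho_\tau$, $r_\tau$, $b_\tau$, and then decompose each of (i)--(v) into a product of a strongly converging factor and at most one weakly converging factor.

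For compactness, $\rho_\tau=r_\tau+b_\tau$ lies in $L^\infty(\Omega_T)\cap L^2(0,T;H^1(\Omega))$ by confinement in $\mathcal S$ together with \eqref{apriori2}, while its discrete time derivative is bounded in $L^2(0,T;H^1(\Omega)')$ by Lemma~\ref{lemma4}. A discrete Aubin--Lions argument therefore yields, along a subsequence, $\rho_\tau\to\rho$ strongly in $L^2(\Omega_T)$ and a.e. To produce strong compactness for $r_\tau$ individually in spite of the degenerate gradient bound \eqref{apriori1}, I would pass through the auxiliary variable $w_\tau=(1-\bar\gamma\rho_\tau)r_\tau$; its gradient
\[\nabla w_\tau=(1-\bar\gamma\rho_\tau)\nabla r_\tau-\bar\gamma r_\tau\nabla\rho_\tau\]
lies in $L^2(\Omega_T)$ because the first summand equals $2\sqrt{r_\tau(1-\bar\gamma\rho_\tau)}\cdot\sqrt{1-\bar\gamma\rho_\tau}\nabla\sqrt{r_\tau}$, bounded via \eqref{apriori1}, while the second is an $L^\infty\times L^2$ product. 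Its discrete time difference is controlled by those of $r_\tau$ and $\rho_\tau$ with bounded $L^\infty$ weights, so a generalized Aubin--Lions result of Dreher--J\"ungel type (as alluded to at the start of this subsection) delivers $w_\tau\to w$ strongly in $L^2(\Omega_T)$ and a.e. On the set $\{\rho<1/\bar\gamma\}$---of full measure because a positive-measure saturation of the hard-core bound would contradict the uniform entropy bound \eqref{discrete_entropyinequality} inherited from $\tilde E(r_0,b_0)<\infty$---dividing gives $r_\tau\to r$ a.e., and then strong $L^p(\Omega_T)$ convergence for every finite $p$ follows by dominated convergence from $0\le r_\tau\le 1/\bar\gamma$. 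The same reasoning handles $b_\tau$.

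With these convergences in hand the five items follow quickly. For (i), the product-rule identity
\[(1-\bar\gamma\rho_\tau)^2\nabla r_\tau=\nabla\!\left[(1-\bar\gamma\rho_\tau)^2 r_\tau\right]+2\bar\gamma(1-\bar\gamma\rho_\tau)r_\tau\nabla\rho_\tau\]
splits the left-hand side into the distributional gradient of a strongly $L^2$-convergent sequence and a (strong)$\times$(weak) product in $L^2$; together with the uniform $L^2(\Omega_T)$ bound of the left-hand side from \eqref{apriori1}, both contributions converge weakly in $L^2$, summing to $(1-\bar\gamma\rho)^2\nabla r$. Item (ii) is the (strong)$\times$(weak) product of the bounded, a.e.\ converging $(\bar\alpha+\bar\gamma)(1-\bar\gamma\rho_\tau)r_\tau$ with the weakly $L^2$-convergent $\nabla\rho_\tau$. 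Items (iii) and (iv) follow by dominated convergence, since $(1-\bar\gamma\rho_\tau)r_\tau$ and $(1-\bar\gamma\rho_\tau)r_\tau b_\tau$ are bounded in $L^\infty$ and converge a.e., and multiplication by the fixed $L^2$-fields $\nabla V_r$ and $\nabla(V_b-V_r)$ preserves strong $L^2$ convergence. Item (v) is immediate: after the explicit cancellation one is left with $\tau\bar\gamma^2 r_\tau\nabla\rho_\tau$, bounded in $L^2(\Omega_T)$ by the $L^\infty$-bound on $r_\tau$ and \eqref{apriori2} and multiplied by $\tau\to 0$. The main technical obstacle is thus the first step: extracting strong compactness of $r_\tau$ and $b_\tau$ themselves from the degenerate bound \eqref{apriori1}, with the division by $1-\bar\gamma\rho_\tau$ relying on the a.e.\ strict hard-core gap $\rho<1/\bar\gamma$ in the limit.
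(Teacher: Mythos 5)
The overall strategy---Aubin--Lions compactness for $\rho_\tau$, then enhanced compactness for degenerate combinations, then a term-by-term decomposition---matches the paper's, and your product-rule decomposition for (i) is a valid variant of the one the paper uses. Where you genuinely diverge is in trying to upgrade to a.e.\ (hence strong $L^p$) compactness of $r_\tau$ and $b_\tau$ \emph{individually} by dividing $w_\tau=(1-\bar\gamma\rho_\tau)r_\tau$ by $1-\bar\gamma\rho_\tau$, which needs $\{\rho=1/\bar\gamma\}$ to be a null set. Your justification of this by the uniform entropy bound does not hold: the regularising piece $\tau(1-\bar\gamma\rho)(\log(1-\bar\gamma\rho)-1)$ tends to $0$, not $+\infty$, as $\rho\to 1/\bar\gamma$, and every other contribution to $\tilde h$ is continuous and bounded on all of $\mathcal{S}$ (since $V_r,V_b\in L^\infty$ and $0\le r,b\le 1/\bar\gamma$). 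Hence $\int\tilde h(r_\tau,b_\tau)\,d{\bf x}\le K$ imposes no constraint at all that would forbid a positive-measure saturation set, and the ``a.e.\ strict hard-core gap'' is simply not available from the given bounds.

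This is not cosmetic: the theorem's weak formulation \eqref{theorem1_1} deliberately carries the extra factor $(1-\bar\gamma\rho)$ on both sides of the flux identities precisely because the limit may touch $\rho=1/\bar\gamma$ on a set of positive measure, and the paper explicitly remarks that only when $1-\bar\gamma\rho>0$ can one identify the fluxes $J_r,J_b$ themselves. The paper's proof never claims compactness of $r_\tau,b_\tau$ alone; it instead invokes the generalised Aubin--Lions lemma of \cite{zamponi2015analysis} (Lemma~7 there), which gives $(1-\bar\gamma\rho_\tau)f(r_\tau,b_\tau)\to(1-\bar\gamma\rho)f(r,b)$ strongly in $L^2(\Omega_T)$ for \emph{every} $f\in C^0(\mathcal S)$, using only \eqref{apriori1}--\eqref{apriori2}, \eqref{inequ3}, the strong convergence of $\rho_\tau$, and the weak-$*$ limits of $r_\tau,b_\tau$. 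That black-box result supplies directly the strong convergences needed for (iii) with $f=r$, for (iv) with $f=rb$, and for the factor multiplying $\nabla\rho_\tau$ in (ii); for (i) the paper writes $(1-\bar\gamma\rho_\tau)^2\nabla r_\tau=(1-\bar\gamma\rho_\tau)\nabla\big((1-\bar\gamma\rho_\tau)r_\tau\big)-(1-\bar\gamma\rho_\tau)r_\tau\nabla(1-\bar\gamma\rho_\tau)$ and uses only that $(1-\bar\gamma\rho_\tau)r_\tau$ converges strongly in $L^2$ with uniformly $L^2$-bounded gradient, again without ever dividing. If you replace the division step and the full-measure claim by an appeal to this lemma, the rest of your argument goes through; as written, that step is a genuine gap.
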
 

\begin{proof}
The estimates from Lemma \ref{lemma3} and Lemma \ref{lemma4} allow us to use Aubin's lemma to deduce the existence of a subsequence (not relabeled) such that, as $\tau \to 0$:
\begin{equation}\label{conv_rho1}
\rho_\tau \to \rho \quad \text{ strongly in } L^2(\Omega_T).
\end{equation}
This implies
\begin{equation}\label{conv_rho}
1-\overline{\gamma} \rho_\tau \to 1-\overline{\gamma}\rho \quad \text{ strongly in } L^2(\Omega_T).
\end{equation}
Note that the $L^{\infty}$ bounds for $b_\tau$ and $r_\tau$ imply that, up to a subsequence,
\begin{equation}\label{conv_rb}
r_\tau\rightharpoonup r, \quad b_\tau\rightharpoonup b \quad \text{ weakly}^*\text{ in } L^{\infty}(\Omega_T).
\end{equation} 
With the help of a generalized version of Aubin-Lions Lemma (see Lemma 7 in \cite{zamponi2015analysis}), we also get strong convergence of the terms $(1-\overline{\gamma} \rho_\tau )r_\tau$ and $(1-\overline{\gamma} \rho_\tau )r_\tau b_\tau$. 
The lemma states that if \eqref{inequ3}, \eqref{conv_rho}, \eqref{conv_rb} and 
\begin{equation}
\|(1-\overline{\gamma}\rho_\tau)\,g\|_{L^2(0,T;H^1(\Omega))}\leq K\quad\text{ for } g\in \{1,r_\tau,b_\tau\}
\end{equation} 
hold, then we have strong convergence up to a subsequence for all $f=f(r_\tau,b_\tau)\in C^0(\mathcal{S};\mathbb{R}^2)$ of
\begin{equation}\label{conv_rhorb}
(1-\overline{\gamma}\rho_\tau)f(r_\tau,b_\tau) \to (1-\overline{\gamma}\rho)f(r,b) \quad \text{ strongly in } L^2(\Omega_T), 
\end{equation}
as $\tau \to 0$.

Applying \eqref{conv_rhorb} with $f(r_\tau,b_\tau)=r_\tau$, we get  
\begin{equation}\label{aubin1}
(1-\overline{\gamma}\rho_\tau)\,r_\tau\to (1-\overline{\gamma}\rho)\,r \quad \text{ strongly in }L^2(\Omega_T).
\end{equation}
Writing \eqref{1} as
\begin{align*}
(1-\overline{\gamma}\rho_\tau)^2\nabla r_\tau=(1-\overline{\gamma}\rho_\tau)\nabla((1-\overline{\gamma}\rho_\tau)r_\tau)-(1-\overline{\gamma}\rho_\tau)r_\tau\nabla(1-\overline{\gamma}\rho_\tau),
\end{align*}
and using the $L^{\infty}$ bounds together with the bounds in Lemma \ref{lemma3} to get $L^2$ bounds for  
$\nabla ((1-\overline{\gamma}\rho_\tau) r_\tau)=\nabla(1-\overline{\gamma}\rho_\tau)r_\tau+2\sqrt{r_\tau}\sqrt{1-\overline{\gamma}\rho_\tau}\sqrt{1-\overline{\gamma}\rho_\tau}\nabla\sqrt{r_\tau}$,
we can deduce
\begin{equation*}
(1-\bar \gamma \rho_\tau)^2\nabla  r_\tau \rightharpoonup(1-\bar \gamma \rho)^2\nabla  r \quad  \text{ weakly in } L^2(\Omega_T).
\end{equation*}
The convergence of \eqref{2} follows from the $L^{\infty}$ bounds, the a priori estimate \eqref{apriori2} as well as from the convergences \eqref{conv_rho1} and \eqref{aubin1}.

The strong convergences of \eqref{3} and \eqref{4} can be shown by applying \eqref{aubin1} in \eqref{3} and the generalized Aubin-Lions lemma with $f(r_\tau,b_\tau)=r_\tau b_\tau$ in \eqref{4}.

Finally, as $ r_\tau \nabla \rho_\tau$ is bounded in $L^2(\Omega_T)$ and $\tau \to 0$, we can deduce \eqref{5}.
\end{proof}
Analogous results hold for equation \eqref{limit2} which allows us to perform the limit $\tau \to 0$ giving a weak solution to system \eqref{theorem1_1}.

The only thing which remains to verify is the entropy inequality \eqref{theorem1_2}. Since $E$ is convex and continuous, it is weakly lower semi-continuous. Because of the weak convergence of $(r_\tau(t),b_\tau(t))$,
\[\int_\Omega \tilde{h}(r(t),b(t))\,d{\bf x}\leq \liminf_{\tau\to 0}\int_\Omega \tilde{h}(r_\tau (t),b_\tau (t))\,d{\bf x}\quad \text{ for a.e. } t>0.\]
We cannot expect the identification of the limit of $\sqrt{1-\rho_\tau}\nabla \sqrt{r_\tau}$, but employing \eqref{conv_rb} with $f(r,b)=\sqrt{r}$, we get 
\[(1-\overline{\gamma}\rho_\tau) \sqrt{r_\tau} \to (1-\overline{\gamma}\rho)\sqrt{r} \quad \text{ strongly in }L^2(\Omega_T)\]
with analogous convergence results for $r$ being replaced by $b$. 
Because of the $L^{\infty}$-bounds and the bounds in \eqref{lemma3}, we obtain $\nabla((1-\overline{\gamma}\rho_\tau)\sqrt{r_\tau}), \nabla((1-\overline{\gamma}\rho_\tau)\sqrt{b_\tau})\in L^2(\Omega_T)$, which implies 
\begin{align}\label{equ12}
\begin{aligned}
(1-\overline{\gamma}\rho_\tau) \sqrt{r_\tau} &\rightharpoonup (1-\overline{\gamma}\rho)\sqrt{r} \quad \text{ weakly in }L^2(0,T;H^1(\Omega)),\\
(1-\overline{\gamma}\rho_\tau) \sqrt{b_\tau} &\rightharpoonup (1-\overline{\gamma}\rho)\sqrt{b} \quad \text{ weakly in }L^2(0,T;H^1(\Omega)).
\end{aligned}
\end{align}
The $L^{\infty}$-bounds, \eqref{equ12} and the fact that
\[\nabla (1-\overline{\gamma}\rho_\tau)\rightharpoonup \nabla (1-\overline{\gamma}\rho) \quad \text{ weakly in } L^2(\Omega_T),\]
imply  that both
\[(1-\overline{\gamma}\rho_\tau)^2\nabla \sqrt{r_\tau}=(1-\overline{\gamma}\rho_\tau)\nabla ((1-\overline{\gamma}\rho_\tau) \sqrt{r_\tau})-(1-\overline{\gamma}\rho_\tau)\sqrt{r_\tau}\nabla (1-\overline{\gamma}\rho_\tau) \]
and
\[(1-\overline{\gamma}\rho_\tau)^2\nabla \sqrt{b_\tau}=(1-\overline{\gamma}\rho_\tau)\nabla ((1-\overline{\gamma}\rho_\tau) \sqrt{b_\tau})-(1-\overline{\gamma}\rho_\tau)\sqrt{b_\tau}\nabla (1-\overline{\gamma}\rho_\tau)\] 
converge weakly in $L^1$ to the corresponding limits. The $L^2$ bounds imply also weak convergence in $L^2$:
\begin{align*}
(1-\overline{\gamma}\rho_\tau)^2\nabla \sqrt{r_\tau}&\rightharpoonup (1-\overline{\gamma}\rho)^2\nabla \sqrt{r}\quad \text{ weakly in }L^2(\Omega_T),\\
(1-\overline{\gamma}\rho_\tau)^2\nabla \sqrt{b_\tau}&\rightharpoonup (1-\overline{\gamma}\rho)^2\nabla \sqrt{b}\quad \text{ weakly in }L^2(\Omega_T).
\end{align*}
As $1-\rho_\tau\geq (1-\rho_\tau)^4$, we can pass to the limit inferior $\tau\to 0$ in
\begin{align*}
\begin{aligned}
&\quad\int_{\Omega} \tilde{h}( r_\tau(T), b_\tau(T))\,d{\bf x}+\int_0^T\int_{\Omega} 2(1-\bar \gamma  \rho_\tau)^4|\nabla\sqrt{ r_\tau}|^2 +2(1-\bar \gamma  \rho_\tau)^4|\nabla\sqrt{ b_\tau}|^2+\frac{\bar \gamma}{2}|\nabla  \rho_\tau|^2\\
&\qquad \qquad +\frac{\tau^2}{2} \frac{\bar \gamma^5  \rho_\tau^2}{(1-\bar \gamma  \rho_\tau)^2}|\nabla  \rho_\tau |^2\,d{\bf x}\,dt+\tau\int_0^T R \left(\begin{pmatrix}
\tilde{u}_\tau\\\tilde{v}_\tau
\end{pmatrix},\begin{pmatrix}
\tilde{u}_\tau\\\tilde{v}_\tau
\end{pmatrix}\right)\,dt\leq  \int_{\Omega} \tilde{h}(r_0,b_0)\,dx\,dy+T C,
\end{aligned}
\end{align*}
attaining the entropy inequality \eqref{theorem1_2}.\qquad \endproof


\section{Conclusion}
Gradient flow techniques provide a natural framework to study the behavior of time evolving systems that are driven by an energy. This energy is decreasing along solutions as fast as possible, a property inherent in nature. Hence many partial differential equation models exhibit this structure. Most of these systems arise in the mean-field limit of a particle system, which has a gradient structure itself. 
Passing from the microscopic level to the macroscopic equations often relies on closure assumptions and approximations, which perturb the original gradient flow structure.

In this paper we studied a mean-field model for two species of interacting particles which was derived using the method of matched asymptotics in the case of low volume fraction.  This asymptotic expansions results in a cross-diffusion system which has a gradient flow structure up to a certain order. We therefore introduce the notion of asymptotic gradient flows for systems whose gradient flow structure is perturbed by higher order terms. We show that this 'closeness' to a classic gradient flow structure allows us to deduce existence and stability results for the perturbed or as we call them asymptotic gradient flow system. 

While the presented results on linear stability (Theorem \ref{linearstability}), well-posedness (Theorem \ref{wellposedness}) and existence of stationary solutions (Theorem \ref{theorem2}) also hold on unbounded domains, the proof of the global existence result in Section \ref{sec:existence} uses embeddings which do not hold on unbounded domains in general, e.g. $H^2(\Omega)$ is compactly embedded in $L^2(\Omega)$.   

The presented work is a first step towards the development of a more general framework for asymptotic gradient flows. It provides the necessary tools to understand the impact of high order perturbations on the energy dissipation as well as the behavior of solutions and opens interesting directions for future research.

\section*{Acknowledgments}
The work of MB was partially supported by the  German Science Foundation (DFG) through Cells-in-Motion Cluster of Excellence
(EXC 1003 CiM), M\"unster.
MTW and HR acknowledge financial support from the Austrian Academy of Sciences \"OAW via the New Frontiers Group NST-001. The authors thank the Wolfgang Pauli Institute (WPI) Vienna for supporting the workshop that lead to this work.


\bibliography{bibliography}

\begin{thebibliography}{10}

\bibitem{adams2011larg}
{\sc S.~Adams, N.~Dirr, M.~A. Peletier, and J.~Zimmer}, {\em From a
  large-deviations principle to the wasserstein gradient flow: A new
  micro-macro passage}, Communications in Mathematical Physics, 307 (2011),
  pp.~791--815.

\bibitem{amann1985global}
{\sc H.~Amann}, {\em Global existence for semilinear parabolic systems}, J.
  reine angew. Math, 360 (1985), pp.~47--83.

\bibitem{amann1989dynamic}
\leavevmode\vrule height 2pt depth -1.6pt width 23pt, {\em Dynamic theory of
  quasilinear parabolic systems}, Mathematische Zeitschrift, 202 (1989),
  pp.~219--250.

\bibitem{ambrosio2008gradient}
{\sc L.~Ambrosio, N.~Gigli, and G.~Savar{\'e}}, {\em Gradient flows: in metric
  spaces and in the space of probability measures}, Springer Science \&
  Business Media, 2008.

\bibitem{bendahmane2009conservative}
{\sc M.~Bendahmane, T.~Lepoutre, A.~Marrocco, and B.~Perthame}, {\em
  Conservative cross diffusions and pattern formation through relaxation},
  Journal de math{\'e}matiques pures et appliqu{\'e}es, 92 (2009),
  pp.~651--667.

\bibitem{Bruna:2012wu}
{\sc M.~Bruna and S.~J. Chapman}, {\em {Diffusion of multiple species with
  excluded-volume effects}}, J. Chem. Phys., 137 (2012),
  pp.~204116--204116--16.

\bibitem{Bruna:2012cg}
\leavevmode\vrule height 2pt depth -1.6pt width 23pt, {\em {Excluded-volume
  effects in the diffusion of hard spheres}}, Phys. Rev. E, 85 (2012),
  p.~011103.

\bibitem{MR2745794}
{\sc M.~Burger, M.~Di~Francesco, J.-F. Pietschmann, and B.~Schlake}, {\em
  Nonlinear cross-diffusion with size exclusion}, SIAM J. Math. Anal., 42
  (2010), pp.~2842--2871.

\bibitem{burger:2015uk}
{\sc M.~Burger, S.~Hittmeir, H.~Ranetbauer, and M.-T. Wolfram}, {\em Lane
  formation by side-stepping}, arXiv preprint arXiv:1507.08491,  (2015).

\bibitem{burger2012nonlinear}
{\sc M.~Burger, B.~Schlake, and M.~Wolfram}, {\em Nonlinear
  poisson--nernst--planck equations for ion flux through confined geometries},
  Nonlinearity, 25 (2012), p.~961.

\bibitem{carrillo2014gradient}
{\sc J.~A. Carrillo, S.~Lisini, and E.~Mainini}, {\em Gradient flows for
  non-smooth interaction potentials}, Nonlinear Analysis: Theory, Methods \&
  Applications, 100 (2014), pp.~122--147.

\bibitem{di2016nonlocal}
{\sc M.~Di~Francesco and S.~Fagioli}, {\em A nonlocal swarm model for
  predators--prey interactions}, Mathematical Models and Methods in Applied
  Sciences, 26 (2016), pp.~319--355.

\bibitem{Elliott:1996}
{\sc C.~M. Elliott and H.~Garcke}, {\em On the {C}ahn--{H}illiard equation with
  degenerate mobility}, SIAM Journal on Mathematical Analysis, 27 (1996),
  p.~404.

\bibitem{Evans199806}
{\sc L.~C. Evans}, {\em Partial Differential Equations (Graduate Studies in
  Mathematics, Vol. 19)}, Amer Mathematical Society, 1st~ed., 6 1998.

\bibitem{gilbarg2015elliptic}
{\sc D.~Gilbarg and N.~S. Trudinger}, {\em Elliptic partial differential
  equations of second order}, springer, 2015.

\bibitem{jungel2014boundedness}
{\sc A.~J{\"u}ngel and N.~Zamponi}, {\em Boundedness of weak solutions to
  cross-diffusion systems from population dynamics}, arXiv preprint
  arXiv:1404.6054,  (2014).

\bibitem{kato2013perturbation}
{\sc T.~Kato}, {\em Perturbation theory for linear operators}, vol.~132,
  Springer Science \& Business Media, 2013.

\bibitem{kipnis2013scaling}
{\sc C.~Kipnis and C.~Landim}, {\em Scaling limits of interacting particle
  systems}, vol.~320, Springer Science \& Business Media, 2013.

\bibitem{ol1968linear}
{\sc O.~A. Ladyzhenskai͡a, V.~A. Solonnikov, and N.~N. Ural'tseva}, {\em
  Linear and Quasi-linear Equations of Parabolic Type}, American mathematical
  society, 1968.

\bibitem{liero2013gradient}
{\sc M.~Liero and A.~Mielke}, {\em Gradient structures and geodesic convexity
  for reaction--diffusion systems}, Philosophical Transactions of the Royal
  Society of London A: Mathematical, Physical and Engineering Sciences, 371
  (2013), p.~20120346.

\bibitem{liero2015microscopic}
{\sc M.~Liero, A.~Mielke, M.~A. Peletier, and D.~Renger}, {\em On microscopic
  origins of generalized gradient structures}, arXiv preprint arXiv:1507.06322,
   (2015).

\bibitem{painter2009continuous}
{\sc K.~J. Painter}, {\em Continuous models for cell migration in tissues and
  applications to cell sorting via differential chemotaxis}, Bulletin of
  Mathematical Biology, 71 (2009), pp.~1117--1147.

\bibitem{Schlake:2011wr}
{\sc B.~Schlake}, {\em {Mathematical Models for Particle Transport: Crowded
  Motion}}, PhD thesis, Westf{\"a}lische Wilhelms-Universit{\"a}t M{\"u}nster,
  May 2011.

\bibitem{Simpson:2009gi}
{\sc M.~J. Simpson, K.~A. Landman, and B.~D. Hughes}, {\em {Multi-species
  simple exclusion processes}}, Physica A: Statistical Mechanics,  (2009).

\bibitem{zamponi2015analysis}
{\sc N.~Zamponi and A.~J{\"u}ngel}, {\em Analysis of degenerate cross-diffusion
  population models with volume filling}, in Annales de l'Institut Henri
  Poincare (C) Non Linear Analysis, Elsevier, 2015.

\bibitem{zinsl2015transport}
{\sc J.~Zinsl and D.~Matthes}, {\em Transport distances and geodesic convexity
  for systems of degenerate diffusion equations}, Calculus of Variations and
  Partial Differential Equations, 54 (2015), pp.~3397--3438.

\end{thebibliography}
\bibliographystyle{siam}

\label{lastpage}

\end{document}